	\tikzset{CENTRE/.style={%
execute at end picture={%
\path let \p1=(current bounding box.west),\p2=(current bounding box.east)
in ({-max(-1*\x1,\x2)},\y1) ({max(-1*\x1,\x2)},\y1);
}}}
\newcommand{\mc}[1]{\mathcal{#1}}
\newcommand{\mb}[1]{\mathbb{#1}}
\newcommand{\msc}[1]{\mathscr{#1}}
\newcommand{\mfr}[1]{\mathfrak{#1}}
\newcommand{\set}[1]{ \left\{ #1 \right\} }
\newcommand{\mr}[1]{\mathrm{#1}}
\DeclareMathOperator{\graph}{graph}
\newcommand{\ms}{\begin{equation}}
\newcommand{\mf}{\end{equation}}
\newcommand{\msa}[1]{\begin{equation}\label{#1}\begin{aligned}}
\newcommand{\mfa}{\end{aligned}\end{equation}}
\newcommand{\pd}{\partial}
\newcommand{\dbyd}[2]{\frac{\mr{d}#1}{\mr{d}#2}}
\newcommand{\veps}{\varepsilon}
\newcommand{\eds}[1]{\, \mr{d}#1}
\newcommand{\Cinfloc}{\mr{C}^\infty_\mr{loc}}
\newcommand{\Czloc}{\mr{C}^0_\mr{loc}}
\DeclareMathOperator{\supp}{supp}
\title{A delayed interior area-to-height estimate for the Curve Shortening Flow}
\author{Arjun Sobnack\footnote{Department of Mathematics, The University of Texas at Austin, TX 78712, USA. \emph{Work completed at the Warwick Mathematics Institute, University of Warwick, Coventry, CV4 7AL, UK.} \\ \indent Email: \href{mailto:arjun.sobnack@austin.utexas.edu}{arjun.sobnack@austin.utexas.edu} \\ \indent Website: \href{https://sites.google.com/view/sobnack}{sites.google.com/view/sobnack}}}
\date{31\textsuperscript{st} March 2026}
\numberwithin{equation}{section}
\numberwithin{figure}{section}
\theoremstyle{plain}
\newtheorem{lem}[equation]{Lemma}
\newtheorem{thm}[equation]{Theorem}
\newtheorem{prop}[equation]{Proposition}
\newtheorem{claim}{Claim}
\newtheorem*{claim*}{Claim}
\newtheorem*{thm*}{Theorem}
\theoremstyle{definition}
\newtheorem{defn}[equation]{Definition}
\newtheorem{rem}[equation]{Remark}
\begin{document}
\selectlanguage{british}

\maketitle

\begin{abstract} 
The principle of \emph{delayed  parabolic regularity} for the Curve Shortening Flow---that if two evolving curves bound a region of area $\mc{A}$, then, starting from time $\mc{A}/\pi$, the regularity of one curve is controllable in terms of the time elapsed, the area $\mc{A}$ and the regularity of the other curve---was proposed by Topping \& the author in \cite{sobtopdelayed}, where they also provided a number of graphical situations in which their delayed regularity framework is valid.
In this paper, we generalise some of the results in \cite{sobtopdelayed} within the graphical setting, ultimately by showing that there holds an \emph{interior} graphical estimate for the Curve Shortening Flow in the spirit of the proposed framework. 
We also provide a few applications of our estimate, such as the existence of Graphical Curve Shortening Flows starting {weakly} from Radon measures without point masses.
\end{abstract}

\section{Introduction}\label{SEC:intro}

In its classical form, the \emph{Curve Shortening Flow (CSF)} takes a smooth embedding of the circle (a smooth \emph{loop}) in the Euclidean plane and evolves it in the direction of its geodesic curvature vector.
The CSF is the instance of the \emph{Mean Curvature Flow (MCF)} in the lowest non-trivial dimensions; in particular, it arises as the \( \mr{L}^2 \)--gradient flow of the length functional.  

Following on from {Huisken}'s seminal paper (\cite{huisken1984flow}) on the MCF of smooth, closed and mean-convex hyper-surfaces in \( \mb{R}^{n + 1} \) for $n \geq 2$, Gage \& Hamilton tackled the question of the long-time behaviour of the CSF, i.e.~the case of $n=1$.
In \cite{gage1986heat} they showed that the evolution \( [0, T) \ni t \mapsto \gamma(t) \subset \mb{R}^2 \) of a given \emph{convex} smooth loop \( \gamma_0 =: \gamma(0)   \) exists for a maximal time \( T < \infty  \) dictated only by the area of the region enclosed by \( \gamma_0 \). 
Moreover, they precisely described the behaviour of the evolution of the loop as time approaches \( T \): the curve \( \gamma(t) \) smoothly approaches a circle of radius \( {\sqrt{2(T - t)}} \) in the sense of blow-ups as \( t \nearrow T \).
Shortly after, Grayson showed in \cite{grayson1987heat} that the evolution of an \emph{arbitrary} smooth loop in the Euclidean plane becomes convex before encountering any singularities, at which point Gage \& Hamilton's work 
applies.

Collectively, \cite{gage1986heat, grayson1987heat} is known and celebrated as the Gage--Grayson--Hamilton Theorem. 
Grayson subsequently weakened, in \cite{grayson1989shortening},  the assumption of planarity of the initial curve, an extension often referred to as Grayson's Theorem.
\\
 
The CSF has been studied in a variety of different scenarios.
The focus of this paper is the setting of CSFs which are, at each time, expressible as graphs over a fixed line or fixed line segment. 
In this setting, the weakly parabolic system for the evolution of a curve reduces to a parabolic equation for the \emph{graphing function}.

Some of the pioneering works in this direction are \cite{ecker1989mean, ecker1991interior} due to Ecker \& Huisken. 
They showed that, given any\footnote{%
Ecker \& Huisken's work considered the more general case of the MCF of graphical hyper-surfaces.%
} graph \( \gamma_0 = \mr{graph}(u_0) \subset \mb{R}^2 \) of a locally Lipschitz function \( u_0 : \mathbb{R} \mapsto \mathbb{R} \), there exists a CSF \( (0, \infty) \ni t \mapsto \gamma(t) = \graph(u(\, \cdot \, , t)\!) \subset \mb{R}^2 \), defined for all time, which attains the initial datum \( \gamma_0  \) locally uniformly, and which remains graphical over the same line; we call this flow the the \emph{Ecker--Huisken flow of \( u_0 \)}. 
They achieved their existence theory by appealing directly to the \emph{Graphical Curve Shortening Flow (GCSF)} equation, which the graphing function \(  u : \mathbb{R} \times (0, T) \mapsto \mathbb{R}\) necessarily satisfies. 
A key step in their proof is to establish \emph{a priori} estimates for the GCSF equation of smooth graphing functions, which propagate an initial local Lipschitz bound forward to the CSF, provided one moves in a little.
Such bounds are stable under locally uniform limits, and thus allowed Ecker \& Huisken to construct solutions via an approximation procedure. 

Around the same time, Angenent developed his \emph{Intersection Principle} (see \cite{angenent1988zero}) and used it in \cite{angenent1991parabolic} to construct solutions to the CSF starting from Lipschitz loops in the uniform sense. 
As in Ecker \& Huisken's work, a key step is to establish an \emph{a priori} estimate for smooth loops which propagates a Lipschitz bound forward to the flow.
Angenent introduced a new and remarkably geometric technique to establish his estimates, which has found much success in other situations---see, for example, \cite{ilmanen1992generalized, chou2001quasilinear, clutterbuck2004thesis, nagase2005interior, bourni2023compact}, and more. 
In particular, Topping \& the author used Angenent's technique in \cite{sobtopdelayed} to establish sharp \emph{delayed} estimates for the GCSF, which fit into the framework to be recalled in \S\ref{sec:framework}.   

Chou \& Zhu used (amongst other insights) a combination of Ecker \& Huisken and Angenent's work to  establish, in \cite{chou1998shortening}, that the CSF can be started, in the locally uniform sense, from any properly embedded and locally Lipschitz planar curve
 which splits the plane into two regions of infinite area, and that their resulting flow exists for all time. 
Their result is the state-of-the-art in terms of the existence theory for non-compact and non-graphical curves. 
They also proved the state-of-the-art uniqueness result for CSFs that evolve through curves whose \emph{ends remain graphical}.
Given a curve \( \gamma_0 \subset \mb{R}^2 \) (of the regularity considered in \cite{chou1998shortening}) with graphical ends, we call the unique solution to the CSF through curves whose ends remain graphical 
the \emph{Chou--Zhu flow of \( \gamma_0 \)}. \\

The running theme in the works \cite{ecker1989mean, ecker1991interior, angenent1991parabolic} above is that a strong regularity condition is shown to be preserved by the CSF. 
In the past three decades, significant progress has been made in the graphical situation towards \emph{instantaneous} estimates, which take a weak initial regularity constraint and deduce a strong regularity condition on the CSF for strictly positive times. 
Evans \& Spruck established in \cite{evans1992motion} (see also \cite{colding2004sharp}) that an $\mr{L}^\infty_\mr{loc}$--bound on an initial graphing function induces an instantaneous $\mr{L}_\mr{loc}^\infty$--bound on the gradient of the subsequent flow, provided one moves in a little.
Their result was 
later extended, independently, by Clutterbuck and Nagase \& Tonegawa in \cite{clutterbuck2004thesis} and \cite{nagase2005interior} respectively; 
in particular, Clutterbuck used her generalisation to establish an existence theory for a family of parabolic equations which contains graphical instances of the CSF and MCF.    
For $\mr{L}^{p}_\mr{loc}$--bounds when $1 < p < \infty$, Chou \& Kwong established in \cite{chou2020general} an \emph{a priori} estimate which takes an initial $\mr{L}^{p}_\mr{loc}$--bound on a graphing function instantaneously to a $\mr{L}^\infty_\mr{loc}$--bound on the gradient of the subsequent flow, provided one moves in a little; they used their estimate to establish the existence of GCSFs starting from graphs of \(\mr{L}^{p}_\mr{loc} \)--initial data.
They also established a uniqueness result from \( \mr{L}^{p} \)--initial data, building on Chou \& Zhu's uniqueness theory (\cite{chou1998shortening}); see also \cite{daskalopoulos2023uniqueness}.

For the non-graphical case, see, for example, the works of Angenent (\cite{angenent1990parabolic}; loosely, initial curves whose curvature is $\mr{L}^p$ for some $p \gg 1$) and Lauer (\cite{lauer2013new}; finite-length Jordan curves). \\

One of the aims of this paper is to extend the existence theory to include, loosely, graphs of Radon measures without point masses. 
In some sense, the strategy we follow is familiar: we first establish an \emph{a priori} estimate for smooth initial data which is stable under \emph{weak convergence of measures}, and then use an approximation procedure to pass to a non-smooth limit.
However, the \emph{a priori} estimate we establish is pronouncedly \emph{unfamiliar} in comparison to the previously-mentioned estimates in \cite{gage1986heat, ecker1989mean, ecker1991interior, angenent1990parabolic, angenent1991parabolic, evans1992motion, lauer2013new} etc., 
as well as, to the author's best knowledge, all other estimates (except, of course, for those in \cite{sobtopdelayed}) from the CSF literature,   
which, recall, start to hold \emph{instantaneously}.
Our estimate is a \emph{delayed} parabolic \emph{a priori} estimate, and in our setting it is, in fact, \emph{impossible} for the estimate to hold \emph{before} a so-called \emph{magic time}, which is determined completely by the `area' under the initial graph.
In particular, we apply our estimate slightly differently to the applications in, for example, \cite{ecker1989mean, ecker1991interior, angenent1990parabolic, angenent1991parabolic, lauer2013new,  clutterbuck2004thesis, chou2020general}. 

Our estimates are reminiscent of those established by Topping \& Yin in \cite{topping2017sharp} for the two--dimensional \emph{Ricci Flow (RF)}, and indeed there is a bridge between their work and ours---see Remark \ref{rem:ty17}. 
Consequently, our existence theory is analogous to that of Topping \& Yin's (\cite{topping2021smoothing}) for the RF.

\subsection{A novel framework}\label{sec:framework}

In \cite{sobtopdelayed} (see also \cite{sobtopmodulus}), Topping \& the author proposed a version of the following framework by which the CSF regularises a curve:

\begin{quote}
\it Given two smooth families  \( [0, T) \ni t \mapsto \gamma^-(t)  \subset \mb{R}^2 \) and \( [0, T) \ni t \mapsto \gamma^+(t) \subset \mb{R}^2 \) of properly embedded curves in the Euclidean plane evolving under the Curve Shortening Flow, 
for which \( \gamma^-(t) \) and \( \gamma^+(t) \) are disjoint, are either both compact or both non-compact (for all \( t \in [0, T) \)) and bound an evolving connected region of fixed and finite area \( \mc A \), 
define the \emph{magic time} to be \(  t_\star := \mc{A}/\pi \).
Then, after flowing beyond the magic time \( t_\star  \), 
one expects to start being able to control the regularity of \( \gamma^+(t) \) for times \(  t_\star < t \ll T \) in terms of the time \( t - t_\star \) elapsed, the area \( \mc A \) and the regularity of \( \gamma^-(t) \).
\end{quote}

Several situations in which the framework above holds exactly as stated were provided in \cite{sobtopdelayed}. 
Most relevant to this paper is the case where \( t \mapsto \gamma^-(t) \) is the static flow of the \( x \)--axis:

\begin{thm}[Delayed \( \mr{L}^1 \)-to-\( \mr{L}^\infty \) estimate; {\cite[Theorem 1.5]{sobtopdelayed}}]
\label{thm:globheightest}
Let \( u : \mathbb{R} \times [0, \infty) \mapsto \mathbb{R}_{\geq 0}\) be a smooth {\normalfont positive} Graphical Curve Shortening Flow starting from \( u_0 := u(\, \cdot \, , 0) \in \mr{L}^1(\mb{R}) \).
Let
\ms
\mc{A} := \| u_0 \|_{\mr{L}^1(\mb{R})} = \int_{x \in \mb{R}} u_0(x) \, \mr{d}x 
\mf
and define the {\normalfont magic time} to be 
\ms
t_\star := \frac{\mc{A}}{\pi}.
\mf
Then there exists a universal (that is, {\normalfont independent} of $u$) and decreasing function \( H : (\frac{1}{\pi}, \infty) \mapsto (0, \infty) \) for which
\ms
\| u(\,\cdot\,,t) \|_{\mr{L}^\infty(\mb{R})} \leq \sqrt{t}\cdot  H \left( \frac{t}{\mc A} \right)
\mf
for all \( t > t_\star \).
\end{thm}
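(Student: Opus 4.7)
My plan is to follow Angenent's technique (\cite{angenent1988zero, angenent1991parabolic}), comparing the evolving graph of $u$ with a one-parameter family of shrinking circles via the intersection principle, combined with the basic area calculation. A direct computation using $u_t = \pdbypd{}{x}\bigl(\arctan u_x\bigr)$ gives
\[
\dbyd{}{t}\int_{\mb R} u(x,t)\,\ed x \;=\; \arctan u_x\Big|_{-\infty}^{+\infty}\;\in\;[-\pi,\pi],
\]
so the enclosed area decreases at rate at most $\pi$; geometrically, a graph's area `budget' $\mc A$ is only guaranteed to be consumable by time $t_\star = \mc A / \pi$, and this is the origin of the magic time.

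The barriers I would use are the shrinking circles $C_{c, r}$ centred at $(c, 0)$, whose upper halves --- paired with the stationary $x$-axis, itself a CSF --- form shrinking semicircular caps enclosing area $\pi(r^2 - 2t)/2$ and extinct at time $r^2/2$. To bound $u(x_0, t)$ at a time $t > t_\star$, the natural candidate is the semicircle centred at $(x_0, 0)$ that is extinct exactly at time $t$, that is, $r = \sqrt{2t}$, with initial area $\pi t$; the surplus $\pi t - \mc A = \pi(t - t_\star) > 0$ is the geometric slack to trade against the spike height. If $u(x_0, t) = h$ is large, a slightly larger shrinking semicircle can be positioned so as to be tangent to the graph at $(x_0, h)$ at time $t$.

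I would then apply Angenent's intersection principle: since the intersection count of two CSFs is non-increasing, and a tangency at time $t$ produces a strict drop, the graph of $u_0$ must have at least two transverse crossings with the corresponding enlarged initial semicircle at time $0$. Combined with the fact that the region enclosed between the graph and the axis has area $\mc A$, a careful geometric comparison exploiting the surplus area $\pi(t - t_\star)$ then produces a pointwise upper bound on $h$ depending only on $t - t_\star$ and $\mc A$. Finally, parabolic scale invariance under $(x, t, u) \mapsto (\lambda x, \lambda^2 t, \lambda u)$, which sends $(\mc A, t) \mapsto (\lambda^2 \mc A, \lambda^2 t)$ and rescales heights by $\lambda$, forces the bound to take the self-similar form $\sqrt{t}\cdot H(t/\mc A)$ with $H$ universal and decreasing on $(1/\pi, \infty)$.

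\textbf{Main obstacle.} The hard part will be turning the qualitative intersection-count statement into a \emph{sharp} quantitative area inequality that places the magic time at exactly $\mc A/\pi$, not at some larger constant multiple: one must choose and track the barrier family so that the surplus $\pi(t - t_\star)$ is used efficiently, without introducing slack through extraneous initial crossings. A secondary subtlety will be establishing the blow-up of $H$ as $t \searrow t_\star$ alongside its correct decay as $t \to \infty$, so that $H$ is genuinely finite and decreasing on the whole of $(1/\pi, \infty)$.
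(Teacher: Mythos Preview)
Your plan diverges from the route the paper takes. The theorem itself is quoted from \cite{sobtopdelayed}, but Remark~\ref{rem:locproofofglob} together with \S\S\ref{SEC:harnquan}--\ref{SEC:locgradest} spells out exactly how the present paper would prove it: introduce the \emph{global} Harnack quantity $\mc H(y,t)=\mc A(y,t)-2t\,\phi(y,t)$ with $\mc A(y,t)=\int_{-\infty}^{y}u(x,t)\,\ed x$ and $\phi=\arctan(\pd_x u)+\tfrac{\pi}{2}$, verify via the pointwise inequality $|\arctan v+v/(1+v^2)|\le\tfrac{\pi}{2}$ that $\pd_t\mc H-\tfrac{1}{1+v^2}\pd_x^2\mc H\ge-\pi$, and apply the maximum principle to obtain $\arctan(\pd_x u(y,t))\le\mc A(y,t)/(2t)$. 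For $t>t_\star$ this is a genuine one-sided gradient bound on the half of the graph carrying at most $\mc A/2$ of the mass, and a trapezoid argument (cf.\ Fig.~\ref{fig:lowerareabdd}) converts it to the height bound; Remark~\ref{rem:locproofofglob} records the explicit output $\|u(t)\|_{\mr L^\infty}\le\sqrt{t}\cdot\sqrt{2\mc A/(t-t_\star)}$. The sharp $\pi$ in $t_\star=\mc A/\pi$ falls out of the calculus inequality with no slack, and no barrier comparison is needed.

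Your shrinking-circle plan is a genuinely different idea, and the obstacle you flag is, I believe, a real gap rather than a mere technicality. Angenent's principle tells you the intersection count between $\graph(u)$ and a shrinking circle is non-increasing, but it does not by itself relate the \emph{height} $u(x_0,t)$ to the \emph{total area} $\mc A$: knowing that $\graph(u_0)$ crosses the initial circle of radius $\sqrt{h^2+2t}$ at least twice imposes no lower bound whatsoever on $\int u_0$, since a graph of arbitrarily small $\mr L^1$-norm can cross any fixed circle twice. Your ``surplus area'' heuristic $\pi t-\mc A>0$ compares the half-disk area with the subgraph area, but those two regions are not ordered by inclusion and the intersection count does not compare them. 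To make an Angenent-type argument land on the sharp constant one needs a barrier family whose intersection structure with the graph is tied \emph{pointwise} to the area swept out on one side; the paper's Harnack quantity achieves exactly this by packaging area and tangent \emph{angle} together, and it is the angle---morally, the intersection count with a pencil of straight lines through a boundary point, not circles---that carries the sharp $\pi$. If you want to rescue the geometric approach, replace circles by that pencil of lines and rediscover the Harnack quantity; as written, the plan does not close.
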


Recall that a GCSF is a smooth function \( u : J \times (0, T) \mapsto \mathbb{R} \) which satisfies the quasi-linear parabolic equation 
\ms
\tag{GCSF} 
\pd_t u = \frac{1}{1 + (\pd_x u)^2} \pd^2_x u = \pd_x [ \tan^{-1}(\pd_x u) ],  \label{GCSF}
\mf 
where \( J \subseteq \mb{R} \) is an interval.
A solution to \eqref{GCSF} induces a family of curves \( (0, T) \ni t \mapsto \gamma(t)  = \graph(u(\,\cdot\,,t)\!) \) which solve the CSF equation
\ms
\tag{CSF} 
\left\langle \pd_t \gamma , \bm{n} \right\rangle = \kappa,  \label{CSF}
\mf
where the geodesic curvature vector  \( \bm{\kappa}(t) = \kappa(t) \bm{n}(t) \)  of \( \gamma(t) \) decomposes as the product of the geodesic curvature \( \kappa(t) \) with the unit normal \( \bm{n}(t) \).
 Conversely, given a solution to \eqref{CSF} which happens to be graphical, its graphing function solves \eqref{GCSF}. 
 
 When an initial condition, say \( u_0 =: u(\,\cdot\,,0) \) for \eqref{GCSF} or \( \gamma_0 =: \gamma(0) \) for \eqref{CSF}, is imposed, the regularity of the initial datum determines the mode in which the flow converges backward in time. 
 For example, in Chou \& Kwong's work \cite{chou2020general}, since \( u_0 \in \mr{L}^{p}_\mr{loc}(\mb{R}) \) for some $p > 1$, they insist that \( u(\,\cdot\,,t) \to u_0 \in \mr{L}^{p}_\mr{loc}(\mb{R}) \) as \( t \searrow 0 \). 
 In the case that \( \gamma_0 \) is continuous, we write \( \gamma : J \times [0, T) \mapsto \mb{R}^2 \) or \( [0, T) \ni t \mapsto \gamma(t) \subset \mb{R}^2 \).
 In the case that \( u_0 \) is continuous, we write \( u \in \Czloc(\mb{R} \times [0, T)\!) \).
 And in the case that \( u_0 \) is smooth, we write \( u \in \Cinfloc(\mb{R} \times [0, T)\!) \).  \\

In \cite{sobtopdelayed}, Topping \& the author also provided the following construction, which shows that the proposed \emph{magic time} \( t_\star = \mc{A}/\pi \) is sharp:

\begin{thm}[{Variant of \cite[Theorems 1.1 and 1.9]{sobtopdelayed}; see \cite[Theorem 5.2.21]{sobphd}}]\label{thm:dirdel}
There exists a sequence \( (v_n)_{n \in \mb{N}} \subset \Cinfloc(\mb{R} \times [0, \infty); \mb{R}_{\geq 0}) \) of Graphical Curve Shortening Flows, with
\ms
\| v_n(\,\cdot\,,0) \|_{\mr{L}^1(\mb{R})} = 1
\mf
for each \( n \in \mb{N} \), such that, for any \(0 \leq s \leq t_\star = 1/\pi < t < \infty \), there holds 
\ms 
\sup_{n \in \mb{N}} v_n(0, s) = \infty \qquad \text{ and } \qquad \sup_{n \in \mb{N}} \| v_n(\,\cdot\,,t) \|_{\mr{L}^\infty(\mb{R})} < \infty.
\mf
\end{thm}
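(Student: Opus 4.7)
The plan is to verify the two estimates separately: the uniform \(\mr L^\infty\)--bound for \(t > t_\star\) is an immediate consequence of Theorem \ref{thm:globheightest}, while the blow-up at or before the magic time is exhibited by an explicit sequence built from rescaled and rotated Angenent ovals.

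For the upper bound, since each \(v_n\) is a non-negative Graphical Curve Shortening Flow on \(\mb R \times [0, \infty)\) with \(\| v_n(0) \|_{\mr L^1(\mb R)} = 1\), Theorem \ref{thm:globheightest} (applied with \(\mc A = 1\) and \(t_\star = 1/\pi\)) yields \(\| v_n(t) \|_{\mr L^\infty(\mb R)} \leq \sqrt{t}\, H(\pi t)\) for every \(t > 1/\pi\); the right-hand side is independent of \(n\).

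For the blow-up, I would use Angenent's oval \(\tilde \omega\)---the ancient, convex, compact CSF given explicitly by \(\cos y = e^\tau \cosh x\) (pre-rotation), shrinking to a point at \(\tau = 0\). After rotating so that its \(y\)--extent grows unboundedly as \(\tau \to -\infty\), set
\ms
\omega_n(t) := \lambda_n \tilde \omega \! \left( (t - 1/\pi)/\lambda_n^2 \right), \qquad t \in [0, 1/\pi),
\mf
for a sequence \(\lambda_n \to 0^+\). By direct computation, each \(\omega_n\) is a closed convex CSF with enclosed area \(2\) at \(t = 0\), extincting at \(t = 1/\pi\), whose height at the origin at time \(t \in [0, 1/\pi)\) is asymptotically \((1/\pi - t)/\lambda_n \to \infty\) as \(\lambda_n \to 0^+\). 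Now let \(u_{n, 0}\) be a smooth, even, non-negative bump on \(\mb R\) with \(\| u_{n, 0} \|_{\mr L^1(\mb R)} = 1\), arranged to dominate pointwise the upper half of a slightly smaller rescaled oval \(\omega_n^{(-)}\) of area \(2 - \veps_n\) (with \(\veps_n \to 0^+\)), and let \(v_n\) be the Ecker--Huisken flow starting from \(u_{n, 0}\), which exists on \(\mb R \times [0, \infty)\) and is non-negative by the scalar maximum principle. The comparison principle for \eqref{GCSF}, with the boundary condition \(v_n \geq 0\) at the edges of the support of \(\omega_n^{(-)}\), forces \(v_n(\, \cdot \,, t)\) to dominate the upper graph of \(\omega_n^{(-)}(t)\) as long as \(\omega_n^{(-)}\) has not yet extincted; hence \(v_n(0, s) \geq h_n^{(-)}(s) \to \infty\) as \(n \to \infty\) for every fixed \(s \in [0, 1/\pi)\).

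The main obstacle is the closed endpoint \(s = 1/\pi\), at which every oval barrier \(\omega_n^{(-)}\) has extincted, so the comparison above degenerates. Here I would exploit that \(v_n(0, \, \cdot\, )\) is non-increasing---by evenness of \(u_{n, 0}\) combined with the parabolic maximum principle, using the Sturmian preservation of a unique interior critical point to keep the maximum of \(v_n(\,\cdot\, , t)\) at the origin---and then carefully balance the scales \(\lambda_n\) and \(\veps_n\) so that the comparison lower bound survives on a shrinking window up to \(s \to (1/\pi)^-\) with unbounded residual height. Combining a diagonal extraction with a fine analysis of the smoothing's contribution at \(t = 1/\pi\), I would secure \(\sup_n v_n(0, 1/\pi) = \infty\), and the monotonicity would propagate this assertion back to every \(s \in [0, 1/\pi]\). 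This scale-matching near extinction is the delicate heart of the argument.
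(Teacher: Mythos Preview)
The paper does not prove Theorem~\ref{thm:dirdel}; it is quoted from \cite{sobtopdelayed} and \cite{sobphd} and used only as context for sharpness. So there is no proof in this manuscript against which to compare your proposal. That said, let me assess your argument on its own terms.

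Your treatment of the two ``easy'' regimes is sound. The uniform bound for \(t>1/\pi\) is exactly Theorem~\ref{thm:globheightest}, and your barrier argument with rescaled Angenent ovals \(\omega_n\) (extincting at \(1/\pi\), height \(\sim (1/\pi - s)/\lambda_n\)) correctly yields \(\sup_n v_n(0,s)=\infty\) for every fixed \(s<1/\pi\). The Avoidance Principle (Theorem~\ref{thm:avoidii}) between the graph of \(v_n\) and the closed oval is the right comparison mechanism here.

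The genuine gap is at the endpoint \(s=1/\pi\). Your oval barrier vanishes precisely there, and the devices you invoke to bridge the gap do not work as stated. The monotonicity \(t\mapsto v_n(0,t)\) non-increasing goes the \emph{wrong} way: it lets you propagate a lower bound at \(1/\pi\) \emph{backwards}, not forwards, so it cannot manufacture the endpoint bound from information at earlier times. Your ``diagonal extraction'' gives only \(v_n(0,s_n)\to\infty\) along \(s_n\nearrow 1/\pi\), which is strictly weaker than \(\sup_n v_n(0,1/\pi)=\infty\); nothing prevents \(v_n(0,\cdot)\) from collapsing rapidly on \([s_n,1/\pi]\). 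The phrase ``fine analysis of the smoothing's contribution'' is not an argument. To close this case one needs an additional barrier or structural input that survives \emph{at} the extinction time---for instance, an upper barrier confining the spatial extent of \(v_n(\cdot,1/\pi)\) so that the conserved mass \(\|v_n(1/\pi)\|_{\mr L^1(\mb R)}=1\) forces unbounded height, or a comparison with a barrier whose extinction is pushed strictly past \(1/\pi\). You have not supplied either, and this is the heart of the theorem.
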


As applications of Theorem \ref{thm:globheightest}, Topping \& the author gave a new proof of an instantaneous \( \mr{L}^{p}$-to-\( \mr{L}^\infty \)--estimate (\cite[Theorem 1.10]{sobtopdelayed}), where $1 < p < \infty$, and established the existence of GCSFs starting from \( \mr{L}^1 \)--initial datum (\cite[Theorem 1.11]{sobtopdelayed}).
The existence result in this paper is a generalisation of the existence result of Topping \& the author, but uses a slightly different method.

\subsection{Main results}\label{sec:results}

Throughout this paper, we use the notation \( J_r(a) \) to mean the interval \( (a-r,a+ r) \subset \mb{R} \) thought of as a spatial domain. 
We also abbreviate \( J_r(0) \) to \( J_r \). 
Any instance of a \( J \) without the decoration as before  will refer to an interval of unspecified length, centre or closedness. 
This is done purely to distinguish the spatial and temporal domains at the level of notation.  

Given a spatial domain $\Omega \subseteq \mb R$, for each $k \in \mb{N}_0 \cup \set{\infty}$ we denote by $\mr{C}^k(\Omega)$ the space of $k$--times differentiable functions all of whose derivatives are bounded and uniformly continuously, and for each pair  $(k,\alpha) \in \mb{N}_0 \times (0, 1]$ we denote by $\mr{C}^{k,\alpha}(\Omega) \subset \mr{C}^k(\Omega)$ the sub-space of functions whose $k$\textsuperscript{th} derivative is $\alpha$--H\"{o}lder continuous. 
We denote by $\mr{C}^k_\mr{loc}(\Omega)$ and $\mr{C}^{k,\alpha}_\mr{loc}(\Omega)$ their respective local spaces.
For a space-time domain $\mathcal{D}$, for example $\mathcal{D} = J_1 \times (0, T)$, we denote by $\mr{C}^{k;l}(\mc{D})$ and $\mr{C}^{k,\alpha; l, \beta}(\mathcal{D})$ the corresponding parabolic versions of the respective spaces before, and by $\mr{C}^{k;l}_\mr{loc}(\mc{D})$ and $\mr{C}^{k,\alpha; l, \beta}_\mr{loc}(\mathcal{D})$ their localised counterparts. 
This notation is consistent with the notation introduced in \S\ref{sec:framework} to specify the mode in which a GCSF attains it initial datum.

Given any function \( f : \Omega \times \mathcal{I} \mapsto \mb{R} \) of space-time, where \( \mathcal I \) is one of $(0, T)$, $[0, T)$, $(0, T]$ or $[0, T]$, we denote the $t$--time slice of \(f\),  for each $t \in \mathcal I$,  by \( f(t) := f(\,  \cdot \, , t ) : \Omega \mapsto \mathbb{R}  \).  
If \( \mc X \) is a function space for which \( f(t) \in \mc X \) for each \( t \in \mc I \), and the mapping \( \mc I \ni t \mapsto f(t) \in \mc X \) is continuous, then we write \( f \in \mr{C}^0_\mr{loc}(\mc I ; \mc X) \).
 \\

In this paper, we generalise some of the results in \cite{sobtopdelayed}---namely Theorem \ref{thm:globheightest}, the \( \mr{L}^{p} \)-to-\( \mr{L}^\infty \)--estimate and the \( \mr{L}^1 \)--existence result---by demonstrating that a local, but still graphical, version of Theorem \ref{thm:globheightest} holds. 
The crucial new localised estimate is

\begin{thm}[Delayed \( \mr{L}^1_\mr{loc} \)-to-\( \mr{L}^\infty_\mr{loc} \)--estimate] \label{thm:intheightest}
Let \( u \in \Cinfloc(J_1 \times [0, T) \! ) \) be a Graphical Curve Shortening Flow starting from \( u_0 := u(0) = u(\, \cdot \, , 0) \in \mr{L}^1(J_1) \), and suppose that
\ms
t_\star : = \frac{\| u_0 \|_{\mr{L}^1(J_1)}}{\pi} =  \frac{1}{\pi} \int_{x \in J_1} |u_0(x)| \eds x < T.
\mf
Then for all \( \delta > 0 \), there exists a constant \( C = C(\delta) < \infty \)  such that the bound
\ms
|u( 0,  t )| \leq C + \frac{1}{2} \| u_0 \|_{\mr{L}^1(J_1)} + \frac{\pi t}{2}  \label{uupbdd}
\mf
holds for all \( t < T \) satisfying 
\ms 
t \geq (1+ \delta) \cdot  t_\star. 
\mf
\end{thm}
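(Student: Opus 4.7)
My plan is to compare $u$ with translating grim-reaper barriers on $J_1$ and use the area-evolution mechanism of the framework of \S\ref{sec:framework} to produce a suitable barrier at a time slightly past $t_\star$. Set
\ms
g(x) := -\tfrac{2}{\pi} \log\cos\!\bigl(\tfrac{\pi x}{2}\bigr), \qquad G^{\pm}_h(x, t) := h \pm g(x) \pm \tfrac{\pi t}{2},
\mf
so that $G^{\pm}_h$ are smooth GCSFs on $J_1 \times \mb R$ translating vertically at speed $\pm \pi/2$ and blowing up to $\pm\infty$ at the boundary $\partial J_1$. For the upper bound in \eqref{uupbdd} (the lower follows by the symmetric argument with $u$ replaced by $-u$ and $G^+$ by $G^-$), the standard comparison principle on $J_1 \times [s, t]$---whose lateral boundary condition is enforced for free by the blow-up of $G^+_h$ at $\partial J_1$, after exhausting $J_1$ by $J_{1-\eta}$ and taking $\eta \searrow 0$---reduces the problem to producing, for each $\delta > 0$, some time $s_\delta \in [0, (1+\delta) t_\star]$ and some height $\tilde h_\delta \leq C(\delta) + \tfrac{1}{2}\|u_0\|_{\mr{L}^1(J_1)} + \tfrac{\pi s_\delta}{2}$ such that $u(\cdot, s_\delta) \leq \tilde h_\delta + g$ on $J_1$. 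Given this, the comparison principle propagates the barrier to $u(0, t) \leq \tilde h_\delta + \pi(t - s_\delta)/2 \leq C(\delta) + \tfrac{1}{2}\|u_0\|_{\mr L^1(J_1)} + \tfrac{\pi t}{2}$ for all $t \in [s_\delta, T)$, which is the claimed estimate.

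\textbf{Producing the critical barrier.} To construct $(s_\delta, \tilde h_\delta)$, I track the critical height
\ms
h_*(s) := \sup_{x \in J_1}\bigl(u(x, s) - g(x)\bigr),
\mf
which is finite for $s > 0$ by the smoothness of $u$. The comparison principle already invoked implies $s \mapsto h_*(s) - \pi s/2$ is non-increasing; it therefore suffices to show that this function has dropped to $\leq C(\delta) + \tfrac{1}{2}\|u_0\|_{\mr L^1(J_1)}$ at some time $s_\delta \leq (1+\delta) t_\star$. The mechanism is the area evolution of \S\ref{sec:framework}: at each time $s$, the barrier $h_*(s) + g$ touches $u(\cdot, s)$ tangentially, and the trapped region has area
\ms
\mc E(s) := 2 h_*(s) + \tfrac{4\log 2}{\pi} - \int_{J_1} u(\cdot, s) \eds x \geq 0,
\mf
while $\tfrac{\ed}{\ed s}\int_{J_1} u(\cdot, s) \eds x = \arctan u_x(1^-, s) - \arctan u_x(-1^+, s) \in (-\pi, \pi)$ via the boundary-angle identity for the GCSF. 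Whenever $h_*(s) - \pi s/2$ is still large, the identity $\mc E(s) \geq 0$ forces a correspondingly tall, narrow positive ``spike'' in $u(\cdot, s)$ near the tangency point (of area controlled in terms of $\|u_0\|_{\mr L^1(J_1)} + \pi s$), and this spike must decay under the curve-shortening mechanism at a quantifiable rate. Converting this back to a differential inequality for $h_*(s) - \pi s/2$ forces its decay to complete inside the window $[0, (1+\delta) t_\star]$, producing the required $s_\delta$.

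\textbf{Main obstacle.} The technical heart of the proof is the rigorous quantitative evolution of $h_*(s) - \pi s/2$: the tangency set of $u(\cdot, s)$ with the critical barrier $h_*(s) + g$ can change topologically as $s$ varies, and $h_*(s)$ is in general only Lipschitz, so Angenent's intersection-number principle is needed both to reduce to generic tangency events at which $h_*$ is differentiable and to rule out pathological contact configurations. A secondary technical point is that $u$ need not extend continuously to $\partial J_1$; this is handled by the $J_{1-\eta}$-exhaustion noted above, the logarithmic blow-up rate of $g$ at $\pm 1$ being exactly strong enough to absorb any boundary behaviour of $u$.
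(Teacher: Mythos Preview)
Your setup via Grim Reaper barriers and the critical height $h_*(s)$ is natural, but the ``technical heart'' you flag is not a technicality---it is the entire content of the theorem, and your proposal does not supply it. The claim that a tall narrow spike ``must decay under the curve-shortening mechanism at a quantifiable rate,'' and that this can be ``converted back to a differential inequality for $h_*(s)-\pi s/2$'' forcing the decay to complete by time $(1+\delta)t_\star$, is exactly what requires a proof; nothing in your outline produces such an inequality, and the framework of \S\ref{sec:framework} you invoke is a \emph{conjectural principle}, not a theorem you may cite. There is also a preliminary issue: $h_*(s)$ need not be finite for $s>0$, since $u$ is only assumed smooth on the \emph{open} interval $J_1$ and may blow up at $\partial J_1$ faster than $g$; your $J_{1-\eta}$--exhaustion handles the comparison-principle boundary condition but does nothing to make $h_*$ finite.

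The paper's route is quite different and replaces your missing step with an explicit PDE inequality. After two reductions (\S\ref{SEC:locgradest}) to the class of \emph{local GCSFs}---solutions which by construction blow up like a Grim Reaper at $\partial J_1$, so that boundary behaviour is known---it introduces the Harnack quantity $\mathcal{H}(y,t)=\mathcal{A}(y,t)-2t\,\phi(y,t)$, where $\mathcal{A}$ is the area under the graph to the left of $y$ and $\phi$ the tangent angle. A direct computation gives $\partial_t\mathcal{H}-\tfrac{1}{1+v^2}\partial_x^2\mathcal{H}\geq-\pi$, so the maximum principle yields $\mathcal{H}\geq-\pi t$ immediately. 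Unpacked, this is the one-sided \emph{gradient} bound $\tan^{-1}(\partial_x u)\leq\mathcal{A}/(2t)$, which is strictly below $\pi/2$ on the lighter half of $J_1$ once $t\geq(1+\delta)t_\star$; a trapezium argument then converts this into the height bound. The point is that the Harnack quantity delivers a clean pointwise parabolic inequality, which is precisely the quantitative mechanism your envelope-of-barriers argument lacks.
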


The construction of Theorem \ref{thm:dirdel} also applies in the local setting of Theorem \ref{thm:intheightest}, and consequently the \emph{magic time} \( t_\star = \| u_0 \|_{\mr{L}^1(J_1)} / \pi \) is sharp here too.
We prove Theorem \ref{thm:intheightest} in \S\ref{SEC:locgradest}. 

For generalisations of a different flavour, the interested reader is directed to, for example, \cite{sobtopharnack}. 
Topping \& the author expect \cite{sobtopmodulus} to play a role in further generalisations. 
 \\

Once a height bound has been established, Evans \& Spruck's estimate (\cite{evans1992motion}) implies that an interior gradient bound holds an instant later, at which point standard parabolic regularity theory (see e.g.~\cite{ladyzenskaja1988linear, krylov1996lectures}) can be used to deduce that higher-order bounds hold after one more instant.  
For details, see \cite{sobphd}. 
We shall use this argument in the proof of our existence result, which will be stated momentarily.

One advantage of the local estimate \eqref{uupbdd} from Theorem \ref{thm:intheightest} is that it is applicable to CSFs that are only \emph{locally} graphical.
Specifically, provided one ensures that an arc of a CSF remains a graph over a fixed line segment for long enough, our estimate implies local gradient bounds, and consequently higher-order derivative bounds.
In particular, one can deduce higher-order covariant derivative bounds on the geodesic curvature, which are by nature geometric as well as analytic.   
\\

Our delayed interior area-to-height estimate (Theorem \ref{thm:intheightest}) is a consequence of the \emph{Harnack inequality} for our local \emph{Harnack quantity} from \S\ref{SEC:harnquan}, which is adapted from the global Harnack quantity of Topping \& the author in \cite{sobtopdelayed}.
Recall that the global Harnack quantity in \cite{sobtopdelayed} is used more-or-less directly on the class of GCSFs for which the height estimates of Theorem \ref{thm:globheightest} are claimed.
By constrast, our local {Harnack quantity} is \emph{not} defined for general GCSFs, but is instead defined on the subclass of \emph{local GCSFs}.
Our local GCSFs will act as  barries for our general GCSFs, and, in particular, height bounds obtained via our Harnack inequality on the former will imply the height bounds desired by Theorem \ref{thm:intheightest} on the latter. \\

As an application of Theorem \ref{thm:intheightest}, in \S\ref{SEC:Lpest} we relatively swiftly deduce an \emph{instantaneous} \( \mr{L}^{p}_\mr{loc} \)-to-\( \mr{L}^\infty_\mr{loc} \) estimate for $1 < p < \infty$ of a form originally due to Chou \& Kwong in \cite{chou2020general}.
Our proof has the bonus of clarifying the dependencies of the constant \( C \) below. 

\begin{thm}[Instantaneous \( \mr{L}^{p}_\mr{loc} \)-to-\( \mr{L}^\infty_\mr{loc} \)--estimate for $p > 1$; cf.~{\cite[Proposition 2.1]{chou2020general}}] \label{thm:intLpest}
Let \( u \in \Cinfloc(J_1 \times [0, T) \! ) \) be a Graphical Curve Shortening Flow starting from \( u_0 := u(0) \in \mr{L}^p(J_1) \) for some \( 1 < p < \infty \). 
Then for some universal  (that is, \emph{independent} of \( p \) and \( u \)) constant \( C < \infty \) there holds
\ms
| u(0, t) | \leq C \left(1 + \frac{\| u_0 \|_{\mr{L}^p(J_1)}^{p/(p - 1)}}{ t^{1/(p-1)}} + t \right)  \label{Lpuupbdd}
\mf
for all \( t \in (0, T) \). 
\end{thm}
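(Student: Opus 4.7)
Our plan is to reduce Theorem \ref{thm:intLpest} to Theorem \ref{thm:intheightest} via the parabolic scale-invariance of \eqref{GCSF}. For \( \lambda \in (0, 1] \) to be chosen and for given \( t \in (0, T) \), consider the rescaled flow
\ms
\tilde u(x, s) := \lambda^{-1} u(\lambda x, \lambda^2 s),
\mf
which again solves \eqref{GCSF}, on \( J_{1/\lambda} \times [0, T/\lambda^2) \supseteq J_1 \times [0, T/\lambda^2) \).  A change of variables followed by Hölder's inequality gives
\ms
\|\tilde u_0\|_{\mr{L}^1(J_1)} = \lambda^{-2} \|u_0\|_{\mr{L}^1(J_\lambda)} \leq \lambda^{-2} (2 \lambda)^{1 - 1/p} \|u_0\|_{\mr{L}^p(J_1)} \leq 2 \lambda^{-(p+1)/p} \|u_0\|_{\mr{L}^p(J_1)},
\mf
so the rescaled magic time \( \tilde t_\star := \|\tilde u_0\|_{\mr{L}^1(J_1)}/\pi \) obeys the same bound divided by \( \pi \).

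The key step is to select \( \lambda \) so that the rescaled evaluation time \( s := t/\lambda^2 \) exceeds \( 2 \tilde t_\star \); by the previous display, a sufficient condition is
\ms
t \geq \tfrac{4}{\pi} \lambda^{(p - 1)/p} \|u_0\|_{\mr{L}^p(J_1)}.
\mf
In the regime \( \|u_0\|_{\mr{L}^p(J_1)} \leq \tfrac{\pi}{4} t \), we take \( \lambda = 1 \) and apply Theorem \ref{thm:intheightest} with \( \delta = 1 \) directly to \( u \), obtaining \( |u(0, t)| \leq C + \tfrac{1}{2} \|u_0\|_{\mr{L}^1(J_1)} + \tfrac{\pi t}{2} \), which is stronger than \eqref{Lpuupbdd}.

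In the remaining regime, we pick \( \lambda \in (0, 1) \) to saturate the condition,
\ms
\lambda := \left( \frac{\pi t}{4 \|u_0\|_{\mr{L}^p(J_1)}} \right)^{p/(p-1)}.
\mf
Applying Theorem \ref{thm:intheightest} with \( \delta = 1 \) to \( \tilde u \) at time \( t/\lambda^2 \) and using the identity \( u(0, t) = \lambda \tilde u(0, t/\lambda^2) \) yields
\ms
|u(0, t)| \leq C \lambda + \tfrac{\lambda}{2} \|\tilde u_0\|_{\mr{L}^1(J_1)} + \tfrac{\pi t}{2 \lambda}.
\mf
A routine algebraic computation, substituting the \( \mr{L}^1 \)-bound and our choice of \( \lambda \) into this inequality, shows that both \( \lambda \|\tilde u_0\|_{\mr{L}^1(J_1)} \) and \( t/\lambda \) are bounded by a universal constant times \( \|u_0\|_{\mr{L}^p(J_1)}^{p/(p-1)}/t^{1/(p-1)} \), while \( C \lambda \leq C \); combining these delivers \eqref{Lpuupbdd}.

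The main obstacle is essentially book-keeping: one must match the powers of \( \lambda \) produced by the parabolic rescaling against those produced by Hölder's inequality.  That the final constant \( C \) is genuinely universal — independent of \( p \) — is secured by fixing \( \delta = 1 \) once and for all (so that \( C(\delta) \) in Theorem \ref{thm:intheightest} is a fixed number) and by the uniform bound \( (2 \lambda)^{1 - 1/p} \leq 2 \) for \( \lambda \in (0, 1] \) and \( p \in (1, \infty) \).
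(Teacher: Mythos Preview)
Your rescaling strategy is genuinely different from the paper's. The paper instead truncates: given $t$, it chooses the level $k>0$ with $\|( |u_0|-k)^+\|_{\mr L^1(J_1)}=t$, compares $u$ with the local GCSF started from $(|u_0|-k)^+$ via the Avoidance Principle, and then bounds $k\le \|u_0\|_{\mr L^p}^{p/(p-1)}/t^{1/(p-1)}$ by the elementary inequality $\|u_0\|_{\mr L^p}^p\ge k^{p-1}\int(|u_0|-k)^+$. Your route avoids the barrier construction and the preliminary reduction to $u(t)\in\mr C^{0,1}(J_1)$; the paper's route keeps the algebra to a one-line Chebyshev-type estimate and makes the universality of $C$ transparent.

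There is, however, a real gap in your universality claim. With your crude bound $(2\lambda)^{1-1/p}\le 2$ and the resulting choice $\lambda=(\pi t/(4\|u_0\|_{\mr L^p}))^{p/(p-1)}$, one computes
\[
\frac{t}{\lambda}=\Bigl(\tfrac{4}{\pi}\Bigr)^{p/(p-1)}\frac{\|u_0\|_{\mr L^p}^{p/(p-1)}}{t^{1/(p-1)}},
\]
and since $4/\pi>1$ the prefactor $(4/\pi)^{p/(p-1)}\to\infty$ as $p\searrow 1$; the same blow-up afflicts your bound on $\lambda\|\tilde u_0\|_{\mr L^1}$. Thus the ``routine algebraic computation'' does not deliver a $p$-independent constant as stated.

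The fix is easy: do not overbound in H\"older. Keep $(2\lambda)^{1-1/p}=2^{1-1/p}\lambda^{1-1/p}$, so that saturating $s\ge 2\tilde t_\star$ gives $\lambda=(\pi t/(2^{2-1/p}\|u_0\|_{\mr L^p}))^{p/(p-1)}$ and
\[
\frac{t}{\lambda}=\Bigl(\tfrac{2^{2-1/p}}{\pi}\Bigr)^{p/(p-1)}\frac{\|u_0\|_{\mr L^p}^{p/(p-1)}}{t^{1/(p-1)}}.
\]
Writing $q=p/(p-1)>1$, one has $\log\bigl[(2^{2-1/p}/\pi)^{q}\bigr]=q\log(2/\pi)+\log 2$, which is decreasing in $q$ and hence bounded above by its value at $q=1$, namely $\log(4/\pi)$. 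Thus $(2^{2-1/p}/\pi)^{p/(p-1)}\le 4/\pi$ for all $p>1$, and similarly the coefficient in front of $\lambda\|\tilde u_0\|_{\mr L^1}$ stays bounded. With this correction your argument goes through with a genuinely universal constant.
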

 
By combining Theorem \ref{thm:intLpest} with Evan \& Spruck's estimate (\cite{evans1992motion}), one can deduce an instantaneous \( \mr{L}^\infty_\mr{loc} \)--estimate for the gradient of a GCSF starting from an \( \mr{L}^{p}_\mr{loc} \)--function, where $1 < p < \infty$.
Informally, the gradient estimates assert that \( \log(|\pd_x u(0, t)|) \) blows up like \(  t^{- (p +1)/(p - 1)} \) for times \( t \ll 1 \) and grows linearity for times \( t \gg 1 \).  \\

As another application of Theorem \ref{thm:intheightest}, in \S\ref{SEC:measgcsf} we construct solutions which start from \emph{non-atomic real–valued Radon measures} in the \emph{weak} sense. 
We denote the space of such measures by \( \mc{M}_*(\mb{R}) \), and refer to App.~\ref{app:meas} for further details, in particular the notion of weak convergence in the larger space \( \mc{M}(\mb{R}) \). 

\begin{thm}[Existence from $\mc{M}_*(\mb{R})$--initial data]\label{thm:measgcsf}
Let \( \nu  = u_0\msc{L}^1 + \nu_\mr{sing} \in \mc{M}_*(\mb{R}) \) be a non-atomic real-valued Radon measure, decomposed into its absolutely continuous \( \nu_{\mr{ac}} =: u_0 \msc{L}^1 \) and its singular \( \nu_\mr{sing} \) part; here \( \msc{L}^1 \) is the Lebesgue measure on \( \mb{R} \). 
Let \( \Omega := \mb{R} \setminus \supp(\nu_\mr{sing}) \) and suppose that \( u_0 \in \mr{L}^p_\mr{loc}(\Omega) \) for some \(1 \leq p < \infty \) (recall that the weakest case of \( p = 1 \) always holds by definition).
Then there exists a Graphical Curve Shortening Flow \( u \in \Cinfloc(\mb{R} \times (0, \infty) \! ) \) which observes the following modes of convergence as \( t \searrow 0 \):
\begin{align}
 u(t) \msc{L}^1 \rightharpoonup \nu \qquad &\text{ weakly in } \, \mc{M}(\mb{R}) \label{convM} \\
u(t) \to u_0 \qquad &\text{ strongly in } \, \mr{L}^p_\mr{loc}(\Omega).  \label{convL1loc}
\end{align}
\end{thm}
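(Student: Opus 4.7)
The plan is to construct $u$ as a smooth subsequential limit of Ecker--Huisken flows $u^{(n)}$ started from smooth approximations $u_0^{(n)}$ of $\nu$, with Theorem \ref{thm:intheightest} supplying the compactness needed for the limit to make sense near $t = 0$. First I would mollify $\nu$ to produce $u_0^{(n)} \in \mr{C}^\infty(\mb{R})$ satisfying $u_0^{(n)}\msc{L}^1 \rightharpoonup \nu$ weakly in $\mc{M}(\mb{R})$; since $\nu_\mr{sing}$ vanishes on $\Omega$, the standard theory of mollification on open sets gives in addition that $u_0^{(n)} \to u_0$ strongly in $\mr{L}^p_\mr{loc}(\Omega)$. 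For each $n$, Ecker--Huisken existence supplies a GCSF $u^{(n)} \in \Cinfloc(\mb{R} \times [0,\infty))$ starting from $u_0^{(n)}$.

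The crucial compactness step extracts uniform $\Cinfloc(\mb{R} \times (0,\infty))$ bounds on $\{u^{(n)}\}$. Fix $(x_0, t_0) \in \mb{R} \times (0,\infty)$. As $\nu$ is non-atomic, I can choose $r > 0$ so small that $|\nu|(\overline{J_r(x_0)}) < \tfrac{1}{4}\pi t_0$; upper semi-continuity of $|\mu|$ on compact sets under weak convergence then gives $\|u_0^{(n)}\|_{\mr{L}^1(J_r(x_0))} < \tfrac{1}{2}\pi t_0$ for all $n$ sufficiently large. Parabolically rescaling $J_r(x_0) \times [0, T)$ to $J_1 \times [0, T/r^2)$ and applying Theorem \ref{thm:intheightest} produces a uniform-in-$n$ bound on $|u^{(n)}(x_0, t)|$ for $t \geq t_0$. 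Feeding this height bound into Evans \& Spruck's interior gradient estimate and then into standard parabolic Schauder theory promotes it to uniform bounds on every $\mr{C}^k$ norm on any compact subset of $\mb{R} \times (0,\infty)$. A diagonal extraction yields a subsequence converging in $\Cinfloc(\mb{R} \times (0,\infty))$ to a smooth GCSF $u$.

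The weak convergence \eqref{convM} now falls out of the observation that any GCSF $v$ satisfies
\[
\frac{\mr{d}}{\mr{d}t}\int_{\mb{R}} v(x,t)\varphi(x) \eds x = -\int_{\mb{R}} \tan^{-1}\!\bigl(\pd_x v(x,t)\bigr)\,\varphi'(x) \eds x
\]
for each $\varphi \in \mr{C}^\infty_c(\mb{R})$, whence $\bigl|\int v(t)\varphi \eds x - \int v(0)\varphi \eds x\bigr| \leq \tfrac{\pi t}{2}\|\varphi'\|_{\mr{L}^1(\mb{R})}$; applying this to $u^{(n)}$, passing to the smooth limit $u$, and then letting $t \searrow 0$ yields $\int u(t)\varphi \eds x \to \int \varphi \eds \nu$. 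The main obstacle I anticipate is the strong $\mr{L}^p_\mr{loc}(\Omega)$ convergence \eqref{convL1loc}: Theorem \ref{thm:intheightest} only yields $\mr{L}^\infty$ bounds whose constants deteriorate as $t \searrow 0$, because the required radius $r$ must shrink to zero. To circumvent this, I would work on compact $K \Subset \Omega$, cover it by small intervals on each of which $\|u_0\|_{\mr{L}^1}$ is small, and apply Theorem \ref{thm:intLpest} (or a suitable local refinement) to obtain uniform-in-$n$ $\mr{L}^p$-in-space bounds on $u^{(n)}(t)$ down to $t = 0$ on $K$; combined with the weak convergence established above, this should upgrade to the desired strong $\mr{L}^p_\mr{loc}(\Omega)$ convergence.
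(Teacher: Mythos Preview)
Your construction and compactness argument are essentially the paper's: mollify, run Ecker--Huisken, use non-atomicity plus the rescaled Theorem~\ref{thm:intheightest} for height bounds, then Evans--Spruck and Schauder to upgrade. One technical difference worth noting: the paper also carries along a dominating \emph{positive} flow $U^n$ started from mollifications of $|\nu|$ with $|u^n_0| \leq U^n_0$. This makes the $\mr{L}^1$-control cleaner (now $U^n_0 \msc{L}^1 \rightharpoonup |\nu|$ as positive measures, so upper semi-continuity on compacta genuinely applies), and the limit $U$ supplies the uniform bound $\|u(t)\|_{\mr{L}^1(J)} \leq \|U(t)\|_{\mr{L}^1(J)} \leq |\nu|(J) + \pi t$ as $t \searrow 0$, which is what lets one pass from test functions $\varphi \in \mr{C}^\infty_\mr{c}$ to $\varphi \in \mr{C}^0_\mr{c}$ in \eqref{convM}. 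Your argument for \eqref{convM} is otherwise the same (it is the paper's Proposition~\ref{prop:dIntdt}), but you have not addressed this density step.

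The real gap is in your plan for the strong convergence \eqref{convL1loc}. Uniform $\mr{L}^p$ bounds on $u^{(n)}(t)$ together with weak convergence do \emph{not} upgrade to strong $\mr{L}^p$ convergence---think of $\sin(nx)$ on a bounded interval. What is missing is a \emph{stability} estimate comparing two GCSFs in $\mr{L}^p$. The paper invokes exactly such a tool, the Chou--Kwong $\mr{L}^p$-separation estimate (Proposition~\ref{prop:c20prop3.1}): for any two GCSFs $u_1, u_2$ on $J_R$ and $r < R$,
\[
\| u_1(t) - u_2(t) \|_{\mr{L}^p(J_r)} \leq \| u_1(s) - u_2(s) \|_{\mr{L}^p(J_R)} + C_{p, R-r}(t - s).
\]
With this in hand \eqref{convL1loc} is a three-$\veps$ argument on any $J_r \Subset J_R \Subset \Omega$: the separation estimate (with $s \searrow 0$) gives $\|u(t) - u^{(m)}(t)\|_{\mr{L}^p(J_r)} \leq \|u_0 - u_0^{(m)}\|_{\mr{L}^p(J_R)} + C t$, and one combines this with $u^{(m)}(t) \to u^{(m)}_0$ smoothly and $u^{(m)}_0 \to u_0$ in $\mr{L}^p_\mr{loc}(\Omega)$. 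Theorem~\ref{thm:intLpest} plays no role here; it gives $\mr{L}^\infty$ bounds that still blow up as $t \searrow 0$ and in any case cannot substitute for a contraction-type estimate.
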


We also take the opportunity to comment on some of Chou \& Kwong's results from \cite{chou2020general}.
In the case of \( \Omega = \mb{R} \) in Theorem \ref{thm:measgcsf},
\begin{itemize}
\item if \( u_0 \in \mr{L}^p(\mb{R}) \), then the convergence \eqref{convL1loc} holds globally, i.e. 
\ms \label{convLp} 
u(t) \to u_0 \qquad \textit{ strongly in } \, \mr{L}^p(\mb{R}). 
\mf
\item if \( p > 1 \), then the solution is unique;
in fact, the mapping \( \mr{L}^p_\mr{loc}(\mb{R}) \ni u_0 \mapsto u \in \mr{C}^0_\mr{loc}([0, \infty); \mr{L}^p_\mr{loc}(\mb{R})\!) \) is well-defined and  continuous---see Theorem \ref{thm:Lploccont}.
\end{itemize}

In the final section (\S\ref{SEC:correspond}) of the paper, we define a class of GCSFs which arises naturally from our existence theory, and use it to further a correspondence between GCSFs and measures initiated by Chou \& Kwong in \cite{chou2020general}. 
The discussion is analogous to that from the two--dimensional RF theory of Topping \& Yin and Peachey \& Topping in \cite{topping2021smoothing, topping2023uniqueness} and \cite{peachey2024twodimensional} respectively. 
Whether our class captures all GCSFs is unknown; 
if so, then whether Chou \& Kwong's correspondence is one-to-one hinges on the natural question of uniqueness raised by our existence result Theorem \ref{thm:measgcsf}.

\subsection{Acknowledgements}
The author would like to thank his supervisor Prof.~Peter M.~Topping for the countless and invaluable discussions and suggestions which lead to this paper, and to thank Drs.~Ben S.~Lambert and Huy T.~Nguyễn for a number of helpful comments. 
The author also thanks the anonymous referee at Discrete and Continuous Dynamical Systems for their corrections and suggestions, which greatly improved the exposition here. 

The author was supported by  the Warwick Mathematics Institute Centre for Doctoral Training, and gratefully acknowledges funding from the University of Warwick and the UK Engineering and Physical Sciences Research Council (Grant number: \href{https://gtr.ukri.org/projects?ref=EP/R513374/1}{EP/R513374/1}).

The author has applied a \href{https://creativecommons.org/licenses/by/4.0/deed.en}{Creative Commons ``Attribution 4.0 International''} ({\href{https://creativecommons.org/licenses/by/4.0/deed.en}{CC BY 4.0}) licence to any author-accepted manuscript version arising from this submission.

\section{A local Harnack quantity}\label{SEC:harnquan}

In this section, we shall construct the local \emph{Harnack quantity} and prove the corresponding \emph{Harnack inequality}. 
As mentioned in \S\ref{sec:results}, our local Harnack quantity is only defined for the class of \emph{local GCSFs}, which we define below.
But first, we require a lemma which confirms that our definition is not ill.

\begin{lem}\label{lem:locgcsf}
Let \( u_0 \in \mr{C}^{0,1}(J_1 ; \mathbb{R}_{\geq 0}) \) be a positive Lipschitz function.
We construct the Lipschitz curve \( \gamma_0 : \mb{R} \mapsto \mb{R}^2 \) as illustrated in Fig.~\ref{fig:gam0fromu0}: 
for \( x \in J_1 \), set \( \gamma_0(x) = (x, u_0(x)\!) \); for \( x = 1 + y \in [1, \infty) \), set \( \gamma_0(x) = (1, u_0(1) + y) \); and for \( x = - 1 - y \in (-\infty, -1] \), set \( \gamma_0(x) = (-1, u_0(-1) + y) \).
Then {the} Chou--Zhu flow \( [0, \infty) \ni t \mapsto \gamma(t) \subset \mb{R}^2 \) of \( \gamma_0 =: \gamma(0) \) admits a graphical representation for some \( u \in \Cinfloc(J_1 \times (0, \infty)\!) \) with \( u(t) \to u_0  \in \Czloc(J_1) \) as \( t \searrow 0 \).
Moreover, \( u(y, t), \pm \pd_x u(y, t) \to \infty \) locally uniformly in \(t \in (0, \infty) \) as \( y \to \pm 1 \).
\end{lem}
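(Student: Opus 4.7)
The plan is to realise $\gamma(t)$ as the Chou--Zhu flow of $\gamma_0$, which is valid because $\gamma_0$ is a proper, locally Lipschitz embedding that splits $\mb{R}^2$ into two regions of infinite area and has graphical ends over the $y$--axis (see \cite{chou1998shortening}). The backbone of the argument will then be the two static vertical lines $L_\pm := \{x = \pm 1\}$, which are trivial CSFs and which will play the dual role of comparison barriers and of Angenent--Sturmian test curves throughout.

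First, since $\gamma_0 \subset \{-1 \leq x \leq 1\}$ but $\gamma_0 \neq L_\pm$, I would apply the strong maximum (avoidance) principle with $L_\pm$ as barriers to deduce the strict containment $\gamma(t) \subset \{-1 < x < 1\}$ for every $t > 0$. Next, for each $x_0 \in J_1$, the vertical line $\{x = x_0\}$ meets $\gamma_0$ transversally in the single point $(x_0, u_0(x_0))$, so Angenent's intersection principle \cite{angenent1988zero} delivers the upper bound $|\gamma(t) \cap \{x = x_0\}| \leq 1$ for all $t > 0$. Combining this with the fact that the two ends of $\gamma(t)$ remain graphical over the $y$--axis with $x$--asymptotes $+1$ and $-1$ respectively---a property preserved by the Chou--Zhu flow---an intermediate--value argument along any smooth parametrisation of $\gamma(t)$ forces each such vertical line to be met at least---and hence exactly---once. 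This yields the desired representation $\gamma(t) \cap \{-1 < x < 1\} = \graph(u(\,\cdot\,, t))$ with $u \in \Cinfloc(J_1 \times (0, \infty))$, the smoothness being inherited directly from $\gamma$, and the convergence $u(t) \to u_0$ in $\Czloc(J_1)$ as $t \searrow 0$ follows from the locally uniform convergence $\gamma(t) \to \gamma_0$ supplied by Chou \& Zhu.

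It then remains to establish the end behaviour $u(y, t), \pm \pd_x u(y, t) \to \infty$ locally uniformly in $t \in (0, \infty)$ as $y \to \pm 1$. Geometrically, the first blow--up asserts that each graph has vertical asymptotes at $x = \pm 1$ (heading upward), while the derivative blow--up follows automatically once the graphical--end parametrisation $x = \phi(y, t)$ is shown to approach its asymptote with $\pd_y \phi \to 0$ as $y \to \infty$. I expect this asymptotic control to be the main obstacle of the lemma, since it requires a uniform--in--$t$ quantification both of how close the ends of $\gamma(t)$ sit to $L_\pm$ and of their direction of escape. The cleanest route will be to sandwich the right and left ends of $\gamma(t)$ by an explicit family of barriers---for instance, translating Grim Reapers rescaled to live in narrow strips $(a, 1)$ and $(-1, -a)$ with $a$ close to $1$---which, when placed initially below the corresponding end of $\gamma_0$, remain below $\gamma(t)$ for all subsequent times by the maximum principle, and thereby force both the height and the slope of $\gamma(t)$ to blow up at $x = \pm 1$ uniformly on any compact subset of $(0, \infty)$.
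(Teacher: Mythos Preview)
Your outline for the graphicality of $\gamma(t)$ is essentially the paper's, though note a subtlety: $\gamma_0$ and $L_{\pm}$ coincide along an entire half-line, so the disjoint-ends hypothesis of the Avoidance Principle fails at $t=0$; the paper sidesteps this by using large shrinking circles outside the strip (and below the $x$--axis) rather than the lines $L_\pm$ themselves. This is easily patched, e.g.\ by comparing with $\{x=\pm(1+\varepsilon)\}$ and letting $\varepsilon\searrow 0$.

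The genuine gap is in your proposed treatment of the end behaviour. A Grim Reaper rescaled to live in the narrow strip $(a,1)$ is asymptotic to \emph{both} $x=a$ and $x=1$; in particular its graph blows up at $x=a$, where $u_0(a)$ is finite. So no vertical translate of it can sit below $\gamma_0$ on $(a,1)$---it necessarily crosses the graph of $u_0$ near $x=a$---and the Avoidance Principle does not apply. Even if one could salvage a height lower bound by tracking the two surviving intersections, a one-sided bound on $u$ does not by itself force $\partial_x u\to\infty$; the slope conclusion needs a separate argument.

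The paper proceeds quite differently at this stage. It uses \emph{upper} barriers in the \emph{full} strip $(-1,1)$---Angenent ovals placed above $\graph(u_0)$, limiting to a translated Grim Reaper---to squeeze $\gamma(t)$ against $V_{\pm1}$ from the inside. It then uses the Intersection Principle with \emph{horizontal} lines $H_{r'}$ to write the end of $\gamma(t)$ as a graph $\widetilde u(\,\cdot\,,t)$ over $V_{\pm1}$, notes that the squeeze gives $\widetilde u(y,t)\to 0$ as $y\to\infty$, and finally invokes Ecker--Huisken's interior gradient estimate together with standard parabolic regularity on $\widetilde u$ to obtain $\partial_y\widetilde u(y,t)\to 0$. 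Translating back to the graph over $J_1$, this yields both $u(y,t)\to\infty$ and $\pm\partial_x u(y,t)\to\infty$ as $y\to\pm1$, locally uniformly in $t$. Your identification of the graph $x=\phi(y,t)$ over $V_{\pm1}$ as the right object is correct; what is missing is a mechanism (regularity theory, not barriers) to pass from $\phi\to 0$ to $\partial_y\phi\to 0$.
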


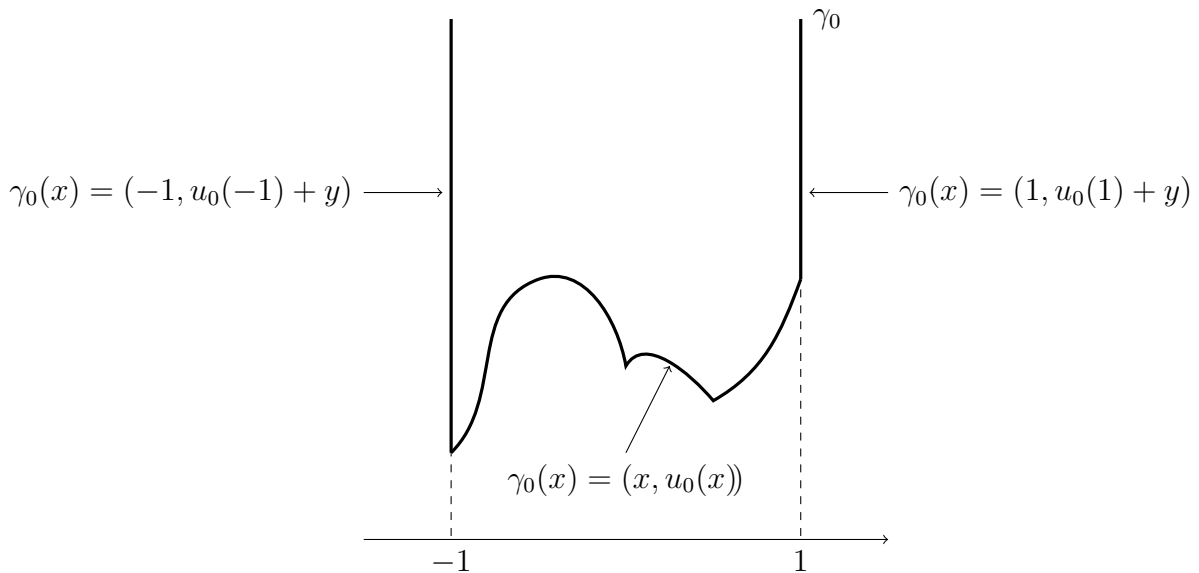
\begin{figure}[h]
\centering
\begin{tikzpicture}[scale=2.3, CENTRE]
\draw [->] (-1.5, 0) -- (1.5, 0);
\draw [dashed] (-1,3) -- (-1, 0) node[below]{${-1}$};
\draw [dashed] (1,3) -- (1, 0) node[below]{$1$};
\draw [very thick] (-1, 0.5) to[out=45, in=200] (-0.5, 1.5) to[out=200+180, in=100] (0, 1)  to[out=60, in=130] (0.5, 0.8)  to[out=30, in=250] (1, 1.5);
\draw [very thick] (-1,3) to (-1, 0.5) ;
\draw [very thick] (1,3) node[right]{$\gamma_0$} to (1, 1.5) ;
\draw [->] (0,.5) node[below]{$\gamma_0(x) = (x, u_0(x)\!)$} to (0.25, 1);
\draw [->] (1.5,2) node[right]{$\gamma_0(x) = (1, u_0(1) + y)$} to (1.05, 2);
\draw [->] (-1.5,2) node[left]{$\gamma_0(x) = (-1, u_0(-1) + y)$} to (-1.05, 2);
\end{tikzpicture}
\caption{Construction of \( \gamma_0 : \mb{R} \mapsto \mb{R}^2  \) from a given \( u_0 \in \mr{C}^{0,1}(J_1; \mb{R}_{\geq 0}) \).}
\label{fig:gam0fromu0}
\end{figure}

\begin{proof}
Recall from App.~\ref{app:avoidinter} that Angenent's Intersection Principle (Theorem \ref{thm:inter}) applies to \( \gamma 
\).
Restrict to the case of \( \gamma : \mb{R} \times [0, T] \mapsto \mb{R}^2 \) for an arbitrary \( T < \infty \).
\begin{enumerate}[1.]
\item \label{num:circ} First, we show that each \( \gamma(t) \) for \( t \in [0, T] \) is asymptotic to the vertical lines \( V_{\pm 1} := \set{ \pm 1 } \times \mb{R} \subset \mb{R}^2 \). 
Applying the Avoidance Principle (Theorem \ref{thm:avoidii})  to \( \gamma \) and to huge shrinking circles placed either to the left of \( V_{-1} \) or to the right of \( V_1 \) shows that \( \gamma(t) \) must live within the strip \( \overline{J_1} \times \mb{R} \subset \mb{R}^2 \).
Applying the Avoidance Principle again with huge circles placed below the \( x \)--axis shows that \( \gamma(t) \) must actually live within the half-strip \( \overline{J_1} \times \mb{R}_{\geq 0} \).
The Strong Maximum Principle implies that \( \gamma(t) \) cannot touch the boundary of this half-strip. 

The use of circles as barriers is a well-known technique; within the half-strip \( J_1 \times \mb{R}_{>0} \), we use different barriers: \emph{Angenent oval solutions}.
These are the closed ancient solutions \( (-\infty, 0) \ni s \mapsto \mfr{o}(s) \subset \mb{R}^2 \) given by vertical translations of the curves  
\ms
\mfr{o}(s) := \left\{ ( x, y ) \in J_1 \times \mb{R} \, \middle| \, \cosh\left( \frac{\pi y}{2} \right) = e^{-\pi^2 s / 4} \cos\left( \frac{\pi x}{2} \right) \right\} \subset \mb{R}^2.
\mf
Let \( L > \| u_0 \|_{\mr{L}^\infty(J_1)} \);  by translating  \( \mfr{o}(s) \) upwards to touch the line \( H_L := \mb{R} \times \{ L \} \subset \mb{R}^2  \) from above, and by considering  the flow \( [0, |s|) \ni t \mapsto \mfr{o}(t + s) \), the Avoidance Principle ensures that the flow of \( \gamma_0 \) must lie outside the flow of the Angenent oval solution---see Fig.~\ref{fig:asymptotics}---for any \( s < 0 \).   
In particular, a direct calculation (see e.g.~\cite[\S 2.5.2]{sobphd}) shows that as \( s \searrow - \infty \), the Angenent oval solutions approach the vertical translation of the \emph{Grim Reaper solution}
\( [0 , \infty) \ni t \mapsto \mfr{g}(t) \subset \mb{R}^2 \), given by
\ms
\mfr{g}(t) := \left\{ (x, y) \in J_1 \times \mb{R} \, \middle| \, e^{-\pi y/2} = e^{- \pi^2 t/4} \cos\left( \frac{\pi x}{2} \right) \right\} \subset \mb{R}^2,\label{grimreaper}
\mf
which initially touches \( H_L \) from above. 
 Since the Grim Reaper solution is asymptotic to the vertical lines \( V_{\pm 1} \), the  curve \( \gamma(t) \) must be too.
 
\item \label{num:line} Next, we show that the curve \( \gamma(t) \) is given by a graph in two senses. 
We first apply Angenent's Intersection Principle (Theorem \ref{thm:inter}) to \( \gamma \) and to the static flow of the vertical lines \( V_r := \set{r} \times \mb{R}  \subset \mb{R}^2 \) for \( r \in J_1 \); see Fig.~\ref{fig:asymptotics}. 
 There is initially a single intersection coming from the graphical representation \( \gamma_0= \graph(u_0) \), and for strictly positive times the curve \( \gamma(t) \) must cross \( V_r \) at least once; thus Angenent's Intersection Principle  ensures that the curve \( \gamma(t) \) cross each \( V_r \) exactly once. 
 Hence \( \gamma(t) =: \graph(u(t)\!) \) is a graph of a function \( u(t) : J_1 \mapsto \mb{R}_{>0} \).
 
 We second apply Angenent's Intersection Principle  to \( \gamma \) and to the horizontal lines \( H_{r'} := \mb{R} \times \set{r'} \subset \mb{R}^2 \); see Fig.~\ref{fig:asymptotics}.
 Let \( \underline{r}' \gg 1 \) be such that \( u(0, t) < \underline{r}' \) for all \( t \in  [0, T] \).
 Then, similar to before, we have 
  for  all \( {r'} \geq  \underline{r}' \) that each \( \gamma(t) \) for \( t \in [0, T] \) intersects \( H_{r'} \) exactly twice, and therefore that
  the ends of \( \gamma(t) \) are graphical over \( \set{\pm 1} \times [\underline{r}', \infty) \subset V_{\pm 1} \).
 
\item \label{num:reg} Finally, we establish the convergence properties from regularity theory. 
We know from \ref{num:line}. that, say, for  \(t \in [0, T] \) the right-most end of \( \gamma(t) \) can be written as a graph of \( \widetilde{u}(t) : [\underline{r}', \infty) \mapsto \mb{R} \) over \( \set{1} \times [\underline{r}', \infty) \subset V_1 \), which starts off as \( \widetilde{u}(0) = 0 \) but which instantly lifts off from zero (i.e.~\( \widetilde u(t) > 0 \) for all \( t \in (0, T]\)).
From \ref{num:circ}., we know that the graphing function \( \widetilde{u}(y, t) \) decays rapidly, say \( |\widetilde{u}(y, t)| \leq \Psi(y) \to 0 \), as \(y \nearrow \infty \), since \( \gamma(t) \) is squeezed between the vertical line \( V_1 \) and an upward-translated Grim Reaper \eqref{grimreaper}.

Ecker \& Huisken's gradient estimate \cite[2.1 Theorem]{ecker1991interior} implies that \( \pd_x \widetilde{u}(y, t) \) remains uniformly bounded for, say, \( (y,t) \in [\underline{r}'', \infty) \times [0, T]  \) for some \( \underline{r}'' \gg \underline{r}'  \). 
Standard parabolic regularity theory (see e.g.~\cite{ladyzenskaja1988linear, krylov1996lectures}) then ensures that the gradient \( \pd_x \widetilde{u}(y, t) \) of the graphing function must also decay to zero as \( y \nearrow \infty \) at the same rate as \( \widetilde{u}(y, t) \): \( |\pd_x \widetilde{u}(y, t)| \leq C \Psi(y) \to 0 \).
Written as a graph over \( J_1 \times \{ 0 \} \), this implies that both \( u(y, t) \) and \( \pd_x u(y, t) \) blow up to infinity as \( y \nearrow 1 \), locally uniformly in \( t \in (0, T] \). 
The argument for the left-most end of \( \gamma(t) \) is similar.
\qedhere
 \end{enumerate} 
\end{proof}

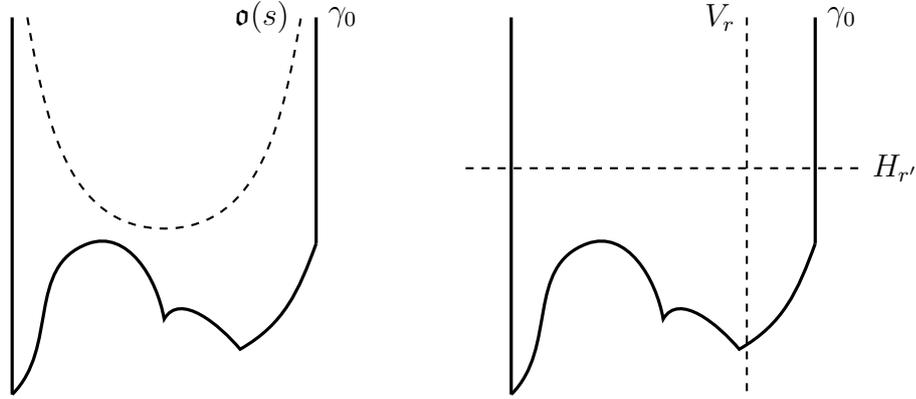
\begin{figure}[h]
\centering
\begin{tikzpicture}[scale=2] 
\draw [very thick] (-1, 0.5) to[out=45, in=200] (-0.5, 1.5) to[out=200+180, in=100] (0, 1)  to[out=60, in=130] (0.5, 0.8)  to[out=30, in=250] (1, 1.5);
\draw [very thick] (-1,3) to (-1, 0.5) ;
\draw [very thick] (1,3) node[right]{$\gamma_0$} to (1, 1.5) ;
\draw [thick, dashed] (-0.9, 3) to[out=280, in=180] (0,1.6) to[out=0, in=260] (0.9, 3) node[left]{$\mfr{o}(s)$};
\path (-1.7, 2);
\end{tikzpicture}
\hspace{1cm}
\begin{tikzpicture}[scale=2] 
\draw [very thick] (-1, 0.5) to[out=45, in=200] (-0.5, 1.5) to[out=200+180, in=100] (0, 1)  to[out=60, in=130] (0.5, 0.8)  to[out=30, in=250] (1, 1.5);
\draw [very thick] (-1,3) to (-1, 0.5) ;
\draw [very thick] (1,3) node[right]{$\gamma_0$} to (1, 1.5) ;
\draw [thick, dashed] (0.55,3) node[left]{$V_r$} to (0.55, .5) ;
\draw [thick, dashed] (-1.3, 2) to (1.3, 2) node[right]{$H_{r'}$};
\end{tikzpicture}
\caption{Left: Applying the Avoidance Principle (Theorem \ref{thm:avoidii}) to the solid \( \gamma \) and to a dashed Angenent oval solution \( \mfr{o} \) in \ref{num:circ}. \\ Right: Applying Angenent's Intersection Principle (Theorem \ref{thm:inter})  to the solid \( \gamma \) and to a dashed vertical/horizontal line \( V_r \)/\( H_{r'} \) in \ref{num:line}.}
\label{fig:asymptotics}
\end{figure}

\begin{rem}[$\mr{L}^1$--dominating function]\label{rem:lem2.1}
The proof of Lemma \ref{lem:locgcsf}, in particular \ref{num:circ}., shows the following: 
Let \( L > \| u_0 \|_{\mr{L}^\infty(J_1)} \). 
Then the restricted solution \( u \in \Cinfloc(J_1 \times (0, T]) \) from Lemma \ref{lem:locgcsf} is dominated by the Grim Reaper solution \eqref{grimreaper} translated upwards by \( L \); that is,
\ms
u(x, t) \leq L + \frac{\pi t}{2} - \frac{2}{\pi} \log\left[ \cos\left( \frac{\pi x}{2} \right) \right] \leq L + \frac{\pi T}{2} - \frac{2}{\pi}  \log\left[ \cos\left( \frac{\pi x}{2} \right) \right] \label{grimreapup}
\mf
for all \( (x, t) \in J_1 \times (0, T] \). In particular, the right-hand side of \eqref{grimreapup} lives in \( \mr{L}^1(J_1) \).

Note that if one wished to apply the Avoidance Principle (Theorem \ref{thm:avoidii}) directly to $\gamma$ and the Grim reaper solution, both of which are non-compact, then one would have first needed to establish the asymptotic behaviour of $\gamma$. 
Working with families of Angenent oval solutions, rather than directly with the Grim reaper solution, circumvents this technicality.
\end{rem}

\begin{rem} For the purposes of \ref{num:reg}. in Lemma \ref{lem:locgcsf}, it was \emph{not} important to clarify how the constant \( C \) depended on the solution \( \widetilde{u} \), as we only required the \emph{qualitative} statement that \( \pd_x \widetilde{u}(y,t) \to 0 \) as \( y \nearrow \infty \) locally uniformly in \( t \in (0, \infty) \).
 In the proof of Theorem \ref{thm:measgcsf}, it will be paramount to obtain \emph{quantitative} estimates that depend on only weak information about the solution.
 Therefore, in \S\ref{sec:construction} we will carry out a procedure similar to above, but with a little more care.
\end{rem}

Lemma \ref{lem:locgcsf} gives rise to the class of \emph{local GCSFs} on which our local Harnack quantity is defined.

\begin{defn}[Local GCSF] \label{defn:locuflow}
Let \( u_0 \in \mr{C}^{0,1}( J_1 ; \mb{R}_{\geq 0} ) \) be a  positive Lipschitz function on \( J_1 \).
The \emph{local GCSF starting from \( u_0 \)}  is the solution \( u \in \Cinfloc(J_1 \times (0, \infty) \! ) \) 
to the GCSF constructed in Lemma \ref{lem:locgcsf}.
\end{defn}
Note that a local GCSF \( u \) extends to a function in \( \Czloc(J_1 \times [0, \infty)\!) \), and is, by definition, positive. \\

We now define the local \emph{Harnack quantity}, which is adapted from \cite[Lemma 2.1]{sobtopdelayed}.

\begin{defn}[{Local Harnack quantity; cf.~\cite[(2.6)]{sobtopdelayed}}]\label{defn:locharnquan}
Let \( u \in \Cinfloc( J_1 \times (0, \infty) \! )  \) be a local GCSF. 
Define the \emph{area} and \emph{angle\footnote{%
The quantity \( \phi(y, t) \) is the angle that the tangent to the curve \( \gamma(t):= \graph(u(t)\!) \) at \( (y, u(y, t)\!) \) makes with the downward vertical direction.%
}} \emph{functions} \( \mc{A} : J_1 \times [0, \infty) \mapsto [0, \infty] \) and \( \phi : J_1 \times (0, \infty) \mapsto (0, \pi) \) by 
\begin{align}
\mc{A}(y, t) &:= \int_{x \in (-1, y)} u(x, t) \, \mr{d}x  \label{area}\\
\intertext{and}
\phi(y, t) &:= \tan^{-1}[\pd_x u(y, t)] + \frac{\pi}{2} \label{angle}
\end{align}
respectively (see Fig.~\ref{fig:areaangle}).
Define the local \emph{Harnack quantity} \( \mc{H} : J_1 \times (0, \infty) \mapsto (-\infty, \infty] \) by
\ms
\mc{H}(y, t) := \mc{A}(y, t) - 2t \,  \phi(y, t). \label{harnack}
\mf
\end{defn}

\begin{figure}[h]
\centering
\begin{tikzpicture}[scale=2.3, CENTRE]
\fill [gray!30] (-0.95, 3) to[out=-89, in=110] (-0.5, 0.5) to[out=110+180, in=180] (0, 0.7) to[out=10, in=120] (0.5, 1) to (0.5,0) to (-1, 0) to (-1, 3) to (-0.95, 3);
\draw [->] (-1.5, 0) -- (1.5, 0);
\draw [dashed] (-1,3) -- (-1, 0) node[below]{${-1}$};
\draw [dashed] (1,3) -- (1, 0) node[below]{$1$};
\draw [very thick] (-0.95, 3) to[out=-89, in=110] (-0.5, 0.5) to[out=110+180, in=190] (0, 0.7) to[out=10, in=120] (0.5, 1) to[out=120+180, in=-91] (0.95, 3) node[left]{$\gamma(t)$};
\draw [] (.5, 1) -- (.5, 0) node[below]{$y$};
\draw [->, thick] (.5,1) -- (0.7, 0.7) node[below]{$\;\;\,\phi(y, t)$};
\draw [->] (-1.3, 1) node[left]{$\mc{A}(y, t)$}to (-.8, 1);
\draw[samples=40,domain=-90:-58] plot ({cos(\x)/5 + .5}, {sin(\x)/5+1});
\end{tikzpicture}
\caption{Illustration of \( \mc{A}(y, t) \) and \( \phi(y, t) \), where $\gamma(t) := \graph(u(t)\!)$.}
\label{fig:areaangle}
\end{figure}

 The content of the following proposition is to confirm that \( \mc{H} \) is smooth on \( J_1 \times (0, \infty) \), and  extends continuously to \( \overline{J_1} \times [0, \infty) \)  with well-controlled boundary behaviour:

\begin{prop}[{Regularity, initial condition and boundary values for \( \mc{H} \); cf.~\cite[Lemma 2.1]{sobtopdelayed}}]\label{lem:harninitbound}
Let \( \mc{H} : J_1 \times (0, \infty) \mapsto (-\infty, \infty] \) be the local Harnack quantity defined for the local GCSF \(  u \in \Cinfloc( J_1 \times (0, \infty) \! )  \) starting from \( u_0  \in \mr{C}^{0,1}(J_1) \).
Then \( \mc{H} \) extends to a function in \( \Cinfloc(J_1 \times (0 , \infty)\!) \cap \Czloc(\overline{J_1} \times [0, \infty) \! ) \), and satisfies the initial condition
\ms
\mc{H}(x, 0) = \mc{A}(x, 0) \geq 0 \label{init}
\mf
for all \( x \in J_1 \) and the boundary values
\ms
\begin{aligned}
\mc{H}(-1, t) &= 0  \geq - \pi t \\
 \mc{H}(1, t) &= \overline{\mc A} - \pi t := \| u_0 \|_{\mr{L}^1(J_1)} - \pi t\geq - \pi t 
\end{aligned}
\label{bound}
\mf
for all \( t \in [0, \infty) \); here \( \mc{A} : J_1 \times [0, \infty) \mapsto [0, \infty] \) is the area function of \( u \).
\end{prop}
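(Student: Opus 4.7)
The plan is to leverage the GCSF equation in the form $\partial_t u = \partial_x \tan^{-1}(\partial_x u) = \partial_x(\phi - \pi/2)$, combined with the boundary asymptotics from Lemma \ref{lem:locgcsf} and the Grim Reaper $\mr{L}^1$--majorant from Remark \ref{rem:dominate}, to deduce smoothness and the stated boundary values in one coherent argument.

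\textbf{Interior smoothness.} The angle $\phi$ inherits $\Cinfloc$--regularity directly from $u$. For $\mathcal{A}$, Remark \ref{rem:dominate} provides an $\mr{L}^1(J_1)$--bound on $u(\, \cdot \,, t)$ uniform on bounded time intervals, so the integral \eqref{area} converges and $\partial_y \mathcal{A} = u$ is smooth. To get smoothness in $t$, I would fix $a \in J_1$ and split $\mathcal{A}(y, t) = \int_{-1}^{a} u \, dx + \int_{a}^{y} u \, dx$; the second term is manifestly smooth in $(y, t)$. For the first term, integrate the GCSF equation over $[-1 + \epsilon, a] \times [s, t]$ for $0 < s < t$, apply Fubini, and let $\epsilon \searrow 0$. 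Dominated convergence (against the Grim Reaper majorant) handles the spatial integrand, while the blow-up $\partial_x u(-1 + \epsilon, \tau) \to -\infty$ of Lemma \ref{lem:locgcsf} sends $\phi(-1 + \epsilon, \tau) \to 0$ locally uniformly in $\tau$, yielding
\begin{equation*}
\int_{-1}^{a} [u(x, t) - u(x, s)] \, dx \;=\; \int_s^t \phi(a, \tau) \, d\tau.
\end{equation*}
Thus $\partial_t \int_{-1}^{a} u \, dx = \phi(a, t)$, and bootstrapping with the smoothness of $\phi$ and the identity $\partial_y \mathcal{A} = u$ gives $\mathcal{A}$, hence $\mathcal{H}$, in $\Cinfloc(J_1 \times (0, \infty))$.

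\textbf{Boundary values and initial condition.} The same asymptotics from Lemma \ref{lem:locgcsf} give $\phi(y, t) \to 0$ as $y \to -1$ and $\phi(y, t) \to \pi$ as $y \to 1$, locally uniformly in $t \in (0, \infty)$, while the Grim Reaper majorant makes $\mathcal{A}(\, \cdot \,, t)$ continuous up to both endpoints via dominated convergence, with $\mathcal{A}(-1, t) = 0$. Extending the above identity to $a = 1$ (by carrying out the $\epsilon$--limit at both endpoints simultaneously) produces $\frac{d}{dt}\overline{\mathcal{A}}(t) = \pi - 0 = \pi$, so $\overline{\mathcal{A}}(t) = \|u_0\|_{\mr{L}^1(J_1)} + \pi t$. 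Substituting into the definition \eqref{harnack} gives exactly the formulas in \eqref{bound}, and the lower bounds $\geq -\pi t$ follow from $u_0 \geq 0$. For the initial condition and joint continuity at $t = 0$, Lemma \ref{lem:locgcsf} supplies $u(t) \to u_0$ in $\Czloc(J_1)$ and the Grim Reaper majorant is uniform in $t$ on bounded intervals, so dominated convergence yields $\mathcal{A}(y, t) \to \mathcal{A}(y_0, 0)$ as $(y, t) \to (y_0, 0)$ for any $y_0 \in \overline{J_1}$; together with $|t \phi| \leq \pi t \to 0$ this gives $\mathcal{H}(y, t) \to \mathcal{A}(y_0, 0) \geq 0$, matching both the interior initial datum and the corner values inherited from the boundary formulas above.

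\textbf{Main obstacle.} The only genuinely delicate step is the $\epsilon \searrow 0$ interchange in the spatial integration of the GCSF equation, since it requires simultaneous control of (i) the spatial integrand via the Grim Reaper domination and (ii) the boundary angle term via the blow-up of $\partial_x u$ at $x = \pm 1$. Both ingredients are handed to us by Lemma \ref{lem:locgcsf} and Remark \ref{rem:dominate}; everything else reduces to direct substitution into \eqref{harnack} and routine dominated convergence.
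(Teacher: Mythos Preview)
Your proposal is correct and follows essentially the same route as the paper: both arguments hinge on the Grim Reaper majorant (Remark~\ref{rem:dominate}) to justify dominated convergence for $\mathcal{A}$, on the boundary asymptotics $\phi \to 0,\pi$ from Lemma~\ref{lem:locgcsf}, and on integrating the GCSF equation in divergence form to obtain $\partial_t \mathcal{A} = \phi$ (equivalently your identity $\int_{-1}^{a}[u(x,t)-u(x,s)]\,dx = \int_s^t \phi(a,\tau)\,d\tau$), from which both the smoothness and the boundary formula $\overline{\mathcal{A}}(t) = \|u_0\|_{\mr{L}^1(J_1)} + \pi t$ fall out. The only difference is organisational---the paper treats continuous extension first, then boundary values, then smoothness, whereas you lead with smoothness---but the ingredients and their deployment are the same.
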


\begin{proof}
It suffices to prove the regularity, extension and conditions when \( t \in [0, T] \), for an arbitrary \( T < \infty \). 

First, we show that \( \mc{H} \) is finite and extends to a continuous function on \( \overline{J_1} \times [0, T] \).
By Remark \ref{rem:lem2.1}, we know that the functions \( u(t) : J_1 \mapsto \mb{R}_{\geq 0} \) for \( t \in [0, T] \) are dominated by the same \( \mr{L}^1(J_1) \)--function. 
Hence the Dominated Convergence Theorem implies that the area function \( \mc{A}  : J_1 \times [0, T] \mapsto [0, \infty] \) is finite everywhere and extends continuously to  \( \overline{J_1} \times [0, T] \), taking the boundary values
\ms
\mc{A}(-1, t) =  0 \qquad \text{ and } \qquad \mc{A}(1, t) = \| u(t) \|_{\mr{L}^1(J_1)}.
\mf
The boundedness of \( \phi(y, t) \) and the locally uniform in \( t \in (0, T] \)--convergence of \( \pd_x u(y, t) \to \pm \infty \) as \( y \to \pm 1 \) given by Lemma \ref{lem:locgcsf} similarly imply that the function \( (x, t) \mapsto t \,\phi(x, t) : J_1 \times (0, T] \mapsto \mb{R} \) extends continuously to \( \overline{J_1} \times [0, T] \).
Thus so does \( \mc{H} : J_1 \times (0, T] \mapsto (-\infty, \infty]  \).

Next, we show that \( \mc{H} \) restricted to the parabolic boundary of \( J_1 \times (0, T] \), i.e.~to ($J_1 \times \set{0}) \cup (\set{\pm 1} \times [0, T])$, is given by \eqref{init}--\eqref{bound}.
The initial condition \eqref{init} and the first boundary condition of \eqref{bound} are immediate, since \( \phi(y, t) \to 0 \) as \( y \searrow -1 \);
we verify the second condition of \eqref{bound}.
Denote by \( v(y, t) := \pd_x u(y, t) \) the gradient of \( u(t) \) at \( y \).
Let \( (a, b) \Subset J_1 \) and consider 
\ms\notag
A_{a,b}(t) := \int_{y \in (a,b)} u(y, t) \eds y. 
\mf
Differentiating under the integral sign at time \( s \), we have that
\ms
\begin{aligned}
\pd_t A_{a,b}(s) &= \int_{y \in (a,b)} \pd_t u(y, s) \eds y \\
&= \int_{y \in (a,b)} \pd_x[ \tan^{-1}(v) ](y, s) \eds y \\
&= [\tan^{-1}(v)](b,s) - [\tan^{-1}(v)](a,s) \\
&= \phi(b, s) - \phi(a, s),
\end{aligned}
\mf
and so integrating up, we have that
\ms
A_{a,b}(t) = A_{a,b}(0) + \int_{s \in (0, t)} [\phi(b, s) - \phi(a,s)] \eds s.
\mf
The Dominated Convergence Theorem implies that both 
\ms A_{a,b}(s) \to \mc{A}(b, s) \qquad \text{ and } \qquad \int_{s \in (0, t)} [\phi(b, s) - \phi(a, s)] \eds s \to \int_{s \in (0, t)} \phi(b, s) \eds s
\mf as \( a \searrow -1 \), the former for each \( s \in [0, t] \), and hence that
\ms
\mc{A}(b, t) = \mc{A}(b, 0)  + \int_{s \in (0, t)} \phi(b, s) \eds s. \label{Aaltform}
\mf
Taking \( b \nearrow 1 \), we have that 
\ms
\mc{A}(b, t) \to \mc{A}(1, t) = \| u(t)\|_{\mr{L}^1(J_1)} = \overline{\mc A} + \pi t 
  \label{Afullint}
\mf 
and, for each \( s \in (0, t] \), that 
\ms
\phi(b, s) \to \pi. \label{phiconv}
\mf 
We shall use \eqref{Afullint} again below; for now, \eqref{Afullint}--\eqref{phiconv}  implies the second boundary condition of \eqref{bound}.

Finally, we show that \( \mc{H} \) is smooth on \( J_1 \times (0, T) \).
It suffices to show that both \( \pd_t \mc{A} \) and \( \pd_x \mc{A} \) are smooth; but this is clear, since by the Dominated Convergence Theorem one can differentiate \eqref{Aaltform} and \eqref{area} to get that
\begin{align}
\pd_t \mc{A} &= \phi  \label{At} \\
\intertext{and}
\pd_x \mc{A} &= u 
\end{align}
respectively, which are both clearly smooth.
\end{proof}

Armed with the regularity, initial condition and boundary values of \( \mc{H} \) provided by Proposition \ref{lem:harninitbound}, we are in the position to prove the key \emph{Harnack inequality}.

\begin{thm}[{Harnack inequality; cf.~\cite[(2.11)]{sobtopdelayed}}]\label{thm:Harnquanlowbdd}
Let \( \mc{H} \in \Cinfloc(J_1 \times (0, \infty)\!) \cap \Czloc(\overline{J_1} \times [0, \infty) \! ) \) be the local Harnack quantity defined for the local GCSF \(  u \in \Cinfloc( J_1 \times (0, \infty) \! ) \). 
Then \( \mc{H} \) satisfies the {\normalfont Harnack inequality}
\ms
 \mc{H}(x, t) \geq -\pi t \label{Harnquanlowbdd}
\mf
for all \( (x, t) \in J_1 \times [0, \infty) \).
\end{thm}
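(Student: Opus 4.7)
The plan is to apply the weak parabolic minimum principle to the shifted quantity $G := \mc{H} + \pi t$ on the closed cylinder $\overline{J_1} \times [0, T]$ for an arbitrary $T < \infty$. By Proposition \ref{lem:harninitbound}, $G$ is continuous on this set, and the values \eqref{init}--\eqref{bound} immediately give $G \geq 0$ on the parabolic boundary $(\{-1, 1\} \times [0, T]) \cup (\overline{J_1} \times \{0\})$.

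The key computation is to apply the quasi-linear parabolic operator $L := \pd_t - (1+v^2)^{-1}\pd_x^2$, with $v := \pd_x u$, to $G$. Rewriting \eqref{GCSF} as $\pd_t u = \pd_x \phi$ and differentiating in $x$ yields $\pd_t v = \pd_x^2 \phi$. Since $\phi = \tan^{-1}(v) + \pi/2$ gives $\pd_t \phi = (1+v^2)^{-1} \pd_t v$, we obtain the striking identity $L\phi = 0$---that is, $\phi$ satisfies the same linearised equation as $u$. Combining this with \eqref{At} and $\pd_x \mc{A} = u$ yields $L\mc{A} = \phi - v/(1+v^2)$, and hence
\[
LG \;=\; L\mc{A} - 2\phi - 2t\,L\phi + \pi \;=\; \pi - \phi - \frac{v}{1+v^2}.
\]
Because $\tan^{-1}(v) = \phi - \pi/2$ we have $v = -\cot \phi$ and therefore $v/(1+v^2) = -\tfrac{1}{2}\sin(2\phi)$, so $LG = f(\phi)$ with $f(\phi) := \pi - \phi + \tfrac{1}{2}\sin(2\phi)$. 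Noting that $f(\pi) = 0$ and $f'(\phi) = -1 + \cos(2\phi) \leq 0$, we conclude $f \geq 0$ on $[0, \pi]$, with strict positivity whenever $\phi < \pi$; since $v(x, t)$ is finite at every interior point, $\phi < \pi$ holds strictly throughout $J_1 \times (0, T]$ and so $LG > 0$ in the interior.

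I would close the argument by contradiction. If $G$ were negative somewhere on $\overline{J_1} \times [0, T]$, then by continuity and compactness it would attain its minimum at some $(x_0, t_0)$, which cannot lie on the parabolic boundary (where $G \geq 0$), so $(x_0, t_0) \in J_1 \times (0, T]$. The first- and second-derivative tests give $\pd_t G(x_0, t_0) \leq 0$ and $\pd_x^2 G(x_0, t_0) \geq 0$, forcing $LG(x_0, t_0) \leq 0$ and contradicting the strict inequality $LG > 0$ just established. Hence $G \geq 0$ on $\overline{J_1} \times [0, T]$, which is precisely $\mc{H} \geq -\pi t$; since $T$ is arbitrary, the bound extends to all of $J_1 \times [0, \infty)$.

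The main obstacle is the identity $LG = f(\phi)$: it rests on the non-obvious fact that $\phi$ satisfies the same quasi-linear equation as $u$ (so the $-2tL\phi$ term vanishes), and then on the one-variable inequality $f \geq 0$, which encodes the sharp coefficient $\pi$ appearing in the estimate. The degeneracy of $(1+v^2)^{-1}$ as $x \to \pm 1$ is harmless, because the minimum-principle step only uses interior strict positivity of $LG$ together with direct non-negativity of $G$ on the parabolic boundary---both of which we have in hand from Proposition \ref{lem:harninitbound}.
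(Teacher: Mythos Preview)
Your proof is correct and follows essentially the same route as the paper: you compute $LG = \pi - \phi - v/(1+v^2)$, which is exactly the paper's computation $L\mc{H} = -\phi - v/(1+v^2) \geq -\pi$ shifted by $\pi$, and then invoke the minimum principle with the boundary data from Proposition~\ref{lem:harninitbound}. The only cosmetic differences are that you package the elementary inequality as $f(\phi) = \pi - \phi + \tfrac{1}{2}\sin(2\phi) \geq 0$ (the paper writes it as $|\tan^{-1}(v) + v/(1+v^2)| \leq \pi/2$), and that you exploit strict positivity $LG > 0$ in the interior to run the minimum-principle contradiction by hand rather than citing the Maximum Principle as a black box.
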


Our justification for calling $\mathcal{H}$ a ``Harnack quantity'', and consequently \eqref{Harnquanlowbdd} a ``Harnack inequality'', both for the $\mathcal{H}$ here and in \cite{sobtopdelayed}, is in its links to other Harnack quantities for other geometric flows---see, for example, Remark \ref{rem:ty17}.

For a concrete link between Hamilton's Harnack Inequality \cite[Main Theorem A]{hamilton1993harnack} for the MCF and an adapted version of our Harnack inequality \cite[(2.11)]{sobtopdelayed}/\eqref{Harnquanlowbdd}, the interested reader is directed to \cite[\S5]{sobtopharnack}, where Topping \& the author show, in the setting of the CSF of convex curves, that the two Harnack inequalities imply a common pointwise curvature estimate.  

\begin{proof}[Proof of Theorem \ref{thm:Harnquanlowbdd}]
We first establish a differential inequality for \( \mc{H} \).
Again, denote by \( v(y, t) = \pd_x u(y, t) \) the gradient of \( u(t) \) at \( y \). 
We already know that \( \pd_t \mc{A} = \phi \) from \eqref{At}.
For the other derivatives, we compute directly from the definitions \eqref{area}--\eqref{angle} that
\begin{align}
\pd^2_x \mc{A} &= v, \label{Axx}\\
\pd_t \phi &= \pd_t [  \tan^{-1}(v)] = \frac{1}{1 + v^2} \pd_t v \label{pt}\\
\intertext{and}
\pd_x^2 \phi &= \pd_x^2 [\tan^{-1}(v) ] = \pd_t v. \label{pxx}
\end{align}
Thus, combining \eqref{At} and \eqref{Axx}--\eqref{pxx} we have that
\begin{align} 
\pd_t \mc{H} &= \pd_t \mc{A} - 2 \, \phi - 2t \, \pd_t \phi = - \phi - \frac{2 t}{1 + v^2} \pd_t v \\
\intertext{and}
 \pd^2_x \mc{H} &=  \pd^2_x \mc{A} - 2t \, \pd^2_x \phi = v - 2 t \, \pd_t v.  
 \end{align}
 Therefore
 \begin{align}
\pd_t \mc{H} - \frac{1}{1 + v^2} \pd_x^2 \mc{H} &= - \phi - \frac{v}{1 + v^2} = - \tan^{-1}(v) - \frac{v}{1 + v^2} - \frac{\pi}{2} \geq - \pi, \label{pdiforharn}
\end{align}
using that \( |\!\tan^{-1}(v) + v/(1 + v^2) | \leq \pi / 2 \) for all \( v \in \mb{R} \), as in \cite[Lemma 2.1]{sobtopdelayed}.

A consequence of Proposition \ref{lem:harninitbound} and \eqref{pdiforharn} is that the quantity
\ms
\widetilde{\mc{H}}(x, t) = \mc{H}(x, t) + \pi t
\mf
is positive on the parabolic boundary of \( J_1 \times (0, \infty) \) and satisfies the differential inequality 
\ms
\pd_t \widetilde{\mc{H}} - \frac{1}{1 + v^2} \pd_x^2 \widetilde{\mc H} \geq 0
\mf
within \( J_1 \times (0, \infty) \).
From the Maximum Principle, we deduce the required \( \widetilde{\mc H}(x, t) \geq 0 \). 
\end{proof}

It is clear that our local Harnack quantity and our Harnack inequality (Definition \ref{defn:locharnquan} and Theorem \ref{thm:Harnquanlowbdd} respectively) are strongly linked to the Harnack quantity and inequality from \cite[Lemma 2.1]{sobtopdelayed}.  
In fact, there is also a link to the Harnack quantity of Topping \& Yin (see \cite{topping2017sharp}) for the two--dimensional RF.

  \begin{rem}[{Link to the Harnack quantity in \cite[\S2.2]{topping2017sharp}}]\label{rem:ty17}
 Recall that the conformal factor \( u \) of a two--dimensional RF in local isothermal coordinates \( (x, y) \in \mb{B}^2 \) satisfies the \emph{Logarithmic Fast Diffusion Equation}
 \ms
\tag{LFDE} \label{LFDE} \pd_t u = \Delta \log(u) ,
\mf
where \( \Delta := \pd_x^2 + \pd_y^2 \).
Notice that \eqref{LFDE} is structurally similar to the equation satisfied by the spatial derivative \( v \) of a GCSF,
\ms\label{gcsf'}
 \pd_t v =  \pd_x^2 [ \tan^{-1}(v) ].
\mf
 In \cite[\S 2.2]{topping2017sharp}, Topping \& Yin looked for a potential \( \varphi \in \Cinfloc(\mb{B}^2 \times [0, \infty) \! ) \) for which
\ms
\Delta \varphi = u 
\mf
and
\ms
 \pd_t \varphi = \log ( \Delta \varphi ) + c 
\mf
 with \( c = 1 \).
 Our area function \( \mc{A}  \in \Cinfloc(J_1 \times (0, \infty) \! ) \)  satisfies a similar pair of conditions: 
\ms
 \pd_x^2 \mc{A} = v 
 \mf
from \eqref{Axx} and
\ms
\pd_t \mc{A} = \phi = \tan^{-1}(v) + \frac{\pi}{2} = \tan^{-1} ( \pd_x^2 \mc{A} ) + \frac{\pi}{2}
\mf
 from \eqref{At}.
 \end{rem}

\section{The delayed interior area-to-height estimate}\label{SEC:locgradest}

In this section, we shall use our local Harnack quantity to deduce our delayed interior area-to-height estimate (Theorem \ref{thm:intheightest}) for an arbitrary GCSF.
Our method is to reduce to the case of a local GCSF, and then use our local Harnack inequality (Theorem \ref{thm:Harnquanlowbdd}) to deduce the height bound. \\

We start by making two reductions. 
First, we  claim that it suffices to consider GCSFs which extend to the boundary.

\begin{claim}\label{claim:suffubdd}
If Theorem \ref{thm:intheightest} holds for GCSFs \( u \in \Cinfloc(J_1 \times [0, T) \! ) \) for which \( u(t) \in \mr{C}^{0,1}(J_1) \) for every \( t \in [0, T) \), then Theorem \ref{thm:intheightest} also holds in full generality.
\end{claim}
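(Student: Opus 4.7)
The plan is to reduce the general case to the Lipschitz hypothesis by parabolic rescaling. Given a GCSF \( u \in \Cinfloc(J_1 \times [0, T)) \) with magic time \( t_\star < T \), and a fixed \( \delta > 0 \), for each \( r \in (0, 1) \) I introduce the parabolically rescaled flow
\[
u_r(x, t) := \tfrac{1}{r} u(rx, r^2 t), \qquad (x, t) \in J_1 \times [0, T/r^2),
\]
which a direct substitution shows is again a solution to \eqref{GCSF}. The crucial point is that \( \overline{J_r} \) is a compact subset of the open set \( J_1 \) on which \( u \) is smooth, so each spatial slice \( u_r(t) \) extends smoothly all the way to \( \overline{J_1} \) and is in particular Lipschitz on \( J_1 \); thus the hypothesis of the claim applies to \( u_r \).

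Next I transport the magic time under this rescaling. A change of variables gives \( \|u_r(0)\|_{\mr{L}^1(J_1)} = r^{-2} \|u_0\|_{\mr{L}^1(J_r)} \), so the magic time of \( u_r \) is \( t_\star^{(r)} = r^{-2} \|u_0\|_{\mr{L}^1(J_r)}/\pi \). Since \( \|u_0\|_{\mr{L}^1(J_r)} \le \|u_0\|_{\mr{L}^1(J_1)} \), the condition \( t \ge (1 + \delta) t_\star \) immediately implies \( s := t/r^2 \ge (1 + \delta) t_\star^{(r)} \); and \( t < T \) forces \( s < T/r^2 \). Applying the hypothesis to \( u_r \) with the \emph{same} parameter \( \delta \) yields a constant \( C = C(\delta) \) (independent of \( r \) and of \( u \)) with
\[
|u_r(0, s)| \le C(\delta) + \tfrac{1}{2} \|u_r(0)\|_{\mr{L}^1(J_1)} + \tfrac{\pi s}{2}.
\]

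Unscaling via \( |u(0, t)| = r |u_r(0, s)| \), and estimating \( \|u_0\|_{\mr{L}^1(J_r)} \le \|u_0\|_{\mr{L}^1(J_1)} \), gives
\[
|u(0, t)| \le r C(\delta) + \tfrac{1}{2r} \|u_0\|_{\mr{L}^1(J_1)} + \tfrac{\pi t}{2r}
\]
for every \( r \in (0, 1) \). Since the left-hand side is independent of \( r \), I let \( r \nearrow 1 \) on the right to conclude \eqref{uupbdd}.

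The only place where care is required is in ensuring that the coefficient \( \tfrac{1}{2} \) in front of \( \|u_0\|_{\mr{L}^1(J_1)} \) is not degraded by the limit \( r \nearrow 1 \). This hinges on two clean facts: the monotonicity \( \|u_0\|_{\mr{L}^1(J_r)} \le \|u_0\|_{\mr{L}^1(J_1)} \) lets one apply the Lipschitz hypothesis with the \emph{same} parameter \( \delta \) rather than a shrunken one, and the \( r \)-dependent factors \( r \) and \( 1/r \) both tend to \( 1 \), so the constants recover exactly in the limit.
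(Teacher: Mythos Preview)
The proposal is correct and follows essentially the same approach as the paper: parabolically rescale by $r\in(0,1)$ to reduce to a solution that is smooth (hence Lipschitz) up to the boundary, apply the restricted theorem with the same $\delta$, and pass to the limit $r\nearrow 1$. Your use of the monotonicity $\|u_0\|_{\mr{L}^1(J_r)}\le\|u_0\|_{\mr{L}^1(J_1)}$ to keep the same $\delta$ is exactly what the paper does implicitly in \eqref{urhotdel}.
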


\begin{proof}
Let \( u \in \Cinfloc(J_1 \times [0, T) \! ) \) be a GCSF of the generality considered by Theorem \ref{thm:intheightest}.
For any \( 0 < \rho < 1 \), we consider the rescaled GCSF \( u^\rho \in \Cinfloc(J_1 \times [0, \rho^{-2} T)\!) \) given by
\ms
u^\rho(x, t) := \frac{1}{\rho} u(\rho x, \rho^2 t ), \label{urho}
\mf
which starts from \( u^\rho_0(x) := \rho^{-1} u_0(\rho x) \).
Clearly then \( u^\rho(t) \in \mr{C}^{0,1}(J_1) \) for every \( t \in [0, \rho^{-2} T) \) (it is, in fact, smooth up to the boundary), and we can readily compute that 
\ms
\| u_0^\rho \|_{\mr{L}^1(J_1)} = \int_{x \in J_1} u^\rho_0(x) \eds x = \frac{1}{\rho^2} \int_{x \in J_\rho} u_0(x) \eds x = \frac{1}{\rho^2} \| u_0 \|_{\mr{L}^1(J_\rho)} \label{urhoL1} 
\mf
{and}
\ms
\left| u^\rho\left(0, \frac{t}{\rho^2} \right) \right| = \frac{1}{\rho} | u(0,  t) |. \label{urhoLinf}
\mf
Let \( \delta > 0 \). 
Then by applying Theorem \ref{thm:intheightest} to the GCSF \( u^\rho \), we have from \eqref{uupbdd} and \eqref{urhoL1}--\eqref{urhoLinf} that
\ms
\frac{1}{\rho} | u(0, t) | \leq C  + \frac{1}{2\rho^2} \| u_0 \|_{\mr{L}^1(J_\rho)} + \frac{\pi t}{2 \rho^2}  \label{urhoupbdd}
\mf
for all 
\ms
\frac{t}{\rho^2} \geq (1 + \delta) \cdot \frac{ \|u_0\|_{\mr{L}^1(J_1)} }{\rho^2 \pi}  \geq (1 + \delta) \cdot \frac{ \|u_0\|_{\mr{L}^1(J_\rho)} }{\rho^2 \pi} , \label{urhotdel}
\mf
for some \( C < \infty \) depending on only \( \delta \). 
We recover the full case of Theorem \ref{thm:intheightest} by letting \( \rho \nearrow 1 \) in \eqref{urhoupbdd}.
\end{proof}

Second, we claim that it suffices to prove Theorem \ref{thm:intheightest} for local GCSFs.

\begin{claim} \label{claim:suffuloc}
If Theorem \ref{thm:intheightest} holds for local GCSFs, then Theorem \ref{thm:intheightest} also holds for GCSFs \( u \in \mr{C}^\infty_\mr{loc}(J_1 \times [0, T) \! ) \) for which \( u(t) \in \mr{C}^{0,1}(J_1) \) for every \( t \in [0, T) \).
\end{claim}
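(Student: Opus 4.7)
The plan is to sandwich $u$ between a local GCSF barrier and its negative, and then invoke the hypothesised version of Theorem \ref{thm:intheightest}.

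\textbf{Step 1 (Construct the barrier).} Since $u_0 \in \mr{C}^{0,1}(J_1)$ by hypothesis, the function $\hat u_0 := |u_0|$ belongs to $\mr{C}^{0,1}(J_1; \mb{R}_{\geq 0})$. Let $\hat u \in \Cinfloc(J_1 \times (0, \infty)\!) \cap \Czloc(J_1 \times [0, \infty)\!)$ be the local GCSF starting from $\hat u_0$ given by Definition \ref{defn:locuflow}. From Lemma \ref{lem:locgcsf}, $\hat u \geq 0$ everywhere and $\hat u(y, t) \to \infty$ as $y \to \pm 1$ locally uniformly in $t \in (0, \infty)$. Crucially, $\|\hat u_0\|_{\mr{L}^1(J_1)} = \|u_0\|_{\mr{L}^1(J_1)}$, so the magic times for $u$ and $\hat u$ coincide.

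\textbf{Step 2 (Comparison $|u| \leq \hat u$).} For $\epsilon > 0$, consider $w_\epsilon := \hat u + \epsilon - u$ on $J_1 \times [0, T)$. Since $u$ and $\hat u$ each satisfy \eqref{GCSF}, writing $\pd_t \tan^{-1}(\pd_x \hat u) - \pd_t \tan^{-1}(\pd_x u)$ via the mean value theorem, $w_\epsilon$ satisfies a linear parabolic equation $\pd_t w_\epsilon = \pd_x(a_\epsilon \, \pd_x w_\epsilon)$ with $a_\epsilon$ smooth and strictly positive. At $t = 0$, $w_\epsilon = |u_0| + \epsilon - u_0 \geq \epsilon > 0$ on $J_1$. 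For fixed $\eta > 0$ small and $\tau \in (0, T)$, the blow-up of $\hat u$ together with the uniform boundedness of $u$ on $\overline{J_1} \times [0, \tau]$ (afforded by the standing hypothesis $u(t) \in \mr{C}^{0,1}(J_1)$) yields $w_\epsilon > 0$ on the lateral boundary $\{\pm(1-\eta)\} \times [0, \tau]$ for $\eta$ sufficiently small (small-$t$ handled by continuity, large-$t$ by local uniform blow-up). The parabolic maximum principle on $J_{1-\eta} \times [0, \tau]$ then gives $w_\epsilon > 0$ throughout. Exhausting $\eta \searrow 0$, $\tau \nearrow T$, and finally $\epsilon \searrow 0$, yields $\hat u \geq u$ on $J_1 \times [0, T)$. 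Since $-u$ is also a GCSF of the same class and $|{-u_0}| = |u_0|$, the identical argument with the same barrier $\hat u$ yields $\hat u \geq -u$. Hence $|u| \leq \hat u$ on $J_1 \times [0, T)$.

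\textbf{Step 3 (Apply the hypothesis).} By assumption, Theorem \ref{thm:intheightest} holds for the local GCSF $\hat u$. So, given $\delta > 0$, there exists $C = C(\delta) < \infty$ with
\[
\hat u(0, t) \leq C + \tfrac{1}{2} \|\hat u_0\|_{\mr{L}^1(J_1)} + \tfrac{\pi t}{2} = C + \tfrac{1}{2} \|u_0\|_{\mr{L}^1(J_1)} + \tfrac{\pi t}{2}
\]
for all $t \in [(1+\delta) t_\star, T)$. Since $\hat u(0, t) \geq 0$ equals its own absolute value, Step 2 gives $|u(0, t)| \leq \hat u(0, t)$, and the bound \eqref{uupbdd} follows for $u$.

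The main obstacle is Step 2: the maximum-principle comparison must be run on the non-compact spatial slab $J_1 \times [0, T)$ with one competitor ($\hat u$) blowing up at $\pm 1$. The blow-up from Lemma \ref{lem:locgcsf} is precisely what furnishes the needed boundary inequality after restricting to $J_{1-\eta}$, and the $\epsilon$-perturbation is essential to avoid tangential contact at $t=0$ and to enable the strong maximum principle.
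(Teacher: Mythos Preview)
Your argument is correct and follows the same strategy as the paper: build a local-GCSF barrier above $|u_0|$, compare, and invoke the hypothesis on the barrier. The only real difference is in Step~2: the paper shifts the \emph{initial datum} by $\eta>0$ (setting $\overline u_0=|u_0|+\eta$) and appeals to the geometric Avoidance Principle (Theorem~\ref{thm:avoidii}) for the comparison, whereas you shift the \emph{solution} by $\epsilon$ (equivalent, since vertical translation commutes with the flow, so $\hat u+\epsilon$ is precisely the paper's $\overline u$) and run a scalar maximum-principle argument on $J_{1-\eta}\times[0,\tau]$. Both routes are valid; the Avoidance Principle packages the lateral-boundary handling more cleanly, while your choice to send $\epsilon\searrow 0$ \emph{before} applying the hypothesis neatly sidesteps the paper's $\delta/2$ bookkeeping, since $\|\hat u_0\|_{\mr L^1(J_1)}=\|u_0\|_{\mr L^1(J_1)}$ exactly. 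One small caution: the parenthetical that ``$u(t)\in\mr C^{0,1}(J_1)$ for each $t$'' alone affords uniform-in-$t$ boundedness on $\overline{J_1}\times[0,\tau]$ is not justified as written (pointwise-in-$t$ Lipschitz bounds need not be uniform in $t$); however, after the reduction of Claim~\ref{claim:suffubdd} the solution is in fact smooth up to $\overline{J_1}$, so this holds in context---and the paper leans on the same fact.
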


\begin{proof}
Let  \( u \in \mr{C}^\infty_\mr{loc}(J_1 \times [0, T) \! ) \) be a GCSF for which \( u(t) \in \mr{C}^{0,1}(J_1) \) for every \( t \in [0, T) \), with \( u_0 := u(0)  \).
We may suppose that \(  u_0 \neq 0 \) (in particular, that \(  \|u_0\|_{\mr{L}^1(J_1)} > 0 \)), for otherwise the result is an immediate consequence of the Avoidance Principle (Theorem \ref{thm:avoidii}).
For any \( \eta > 0 \), 
consider the positive Lipschitz function given by
\ms
\overline{u}_0(x) := |u_0(x)|  + \eta >  u_0(x) \label{ubareta}
\mf
Let \( \overline{u} \in \Cinfloc(J_1 \times (0, \infty)\!) \) be the local GCSF starting from \( \overline{u}_0 \). 
Since each \( u(t) \) is bounded on \( \overline{J_1} \), it follows from the Avoidance Principle (Theorem \ref{thm:avoidii}) and the initial ordering \eqref{ubareta} that the graphs of the local GCSF \( \overline{u}(t) \) and the original GCSF \( u(t) \) are disjoint for every \( t \in [0, T) \). 
In particular, we must have 
\ms
u(x, t) \leq \overline{u}(x, t)
\mf
for all \( (x, t) \in J_1 \times [0, T) \).
Applying Theorem \ref{thm:intheightest} to \( \overline{u} \) with \( \delta/2  \), while noting that
\ms
\| \overline{u}_0 \|_{\mr{L}^1(J_1)} = \| {u}_0 \|_{\mr{L}^1(J_1)} + 2\eta >  \| {u}_0 \|_{\mr{L}^1(J_1)} > 0 ,
\mf
we have, for any \( \delta > 0 \) and some \( C < \infty \) depending on only \( \delta \), that
\ms
u(0, t) \leq  \overline{u}(0, t) = |\overline{u}(0, t)| \leq C + \frac{1}{2}
(\| {u}_0 \|_{\mr{L}^1(J_1)} + 2\eta) 
+ \frac{\pi t}{2} 
\mf
for all
\ms
t \geq (1 + \delta) \cdot \frac{ \| u_0 \|_{\mr{L}^1(J_1)}}{\pi}  \geq \left( 1 + \frac{\delta}{2} \right) \cdot \frac{\| u_0 \|_{\mr{L}^1(J_1)} + 2 \eta}{\pi}, 
\mf
as least for \( \eta > 0 \) small enough. 
By taking \( \eta \searrow 0 \), we obtain the desired upper bound for \( u(0, t) \). 
Repeating the argument above with \( -u \) replacing \( u \) yields the desired lower bound.
\end{proof}

The upshot of Claims \ref{claim:suffubdd}--\ref{claim:suffuloc} is, that to prove Theorem \ref{thm:intheightest} in full generality, it suffices to prove the following version of the theorem:

\begin{thm}\label{thm:intheightesteven}
Let  \( u \in \Cinfloc(J_1 \times (0, \infty)\!) \) be a local GCSF starting from \( u_0 \in \mr{C}^{0,1}(J_1) \), and let
\ms
t_\star := \frac{\overline{\mc{A}}}{\pi} =  \frac{\| u_0 \|_{\mr{L}^1(J_1)}}{\pi} =  \frac{1}{\pi} \int_{x \in J_1} u_0(x) \eds x.
\mf
Then for all \( \delta > 0 \), there exists a constant \( C = C(\delta) < \infty \) such that the bound
\ms
|u( 0,  t )| \leq C  + \frac{\overline{\mc{A}}}{2} + \frac{\pi t}{2}  \label{uupbddeven}
\mf
hold for all \( t \geq (1+ \delta) \cdot t_\star \). 
\end{thm}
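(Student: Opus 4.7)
The plan is to deduce \eqref{uupbddeven} via a case analysis on the area $\alpha := \mc{A}(0, t)$ to the left of the origin, using the Harnack inequality of Theorem \ref{thm:Harnquanlowbdd} on one side of $0$ and its reflected counterpart
\ms
\phi(y, t) \geq \frac{\mc{A}(y, t) - \overline{\mc A}}{2t}, \qquad (y, t) \in J_1 \times (0, \infty),
\mf
on the other. The step requiring the most care is the verification of this reflected Harnack, which I would establish either by rerunning the proof of Theorem \ref{thm:Harnquanlowbdd} for the mirror quantity $\widetilde{\mc H}(y, t) := (\overline{\mc A} + \pi t - \mc{A}(y, t)) - 2t(\pi - \phi(y, t))$---which satisfies the same drift-diffusion inequality $\pd_t \widetilde{\mc H} - (1 + v^2)^{-1} \pd_x^2 \widetilde{\mc H} \geq -\pi$ by the same estimate $|\tan^{-1}(v) + v/(1+v^2)| \leq \pi/2$, and whose parabolic-boundary values are at least $-\pi t$---or, equivalently, by applying Theorem \ref{thm:Harnquanlowbdd} directly to the reflected local GCSF $\widetilde u(x, t) := u(-x, t)$ and pulling back through the reflection $y \mapsto -y$.

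With both inequalities available, set $\epsilon_\delta := \pi \delta / (4(1+\delta))$ and $K_\delta := \cot(\epsilon_\delta)$, and split into Case 1, where $\alpha \leq (\overline{\mc A} + \pi t)/2$, and Case 2, where $\alpha \geq (\overline{\mc A} + \pi t)/2$. In Case 1, since $\mc{A}(\cdot, t)$ is non-decreasing, $\mc{A}(y, t) \leq \alpha$ on $[-1, 0]$; plugging this into Theorem \ref{thm:Harnquanlowbdd} and using the hypothesis $t \geq (1+\delta) t_\star = (1+\delta) \overline{\mc A}/\pi$ yields $\phi(y, t) \leq (\alpha + \pi t)/(2t) \leq \pi - \epsilon_\delta$ on $[-1, 0]$. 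Inverting the relation $\pd_x u = \tan(\phi - \pi/2)$ then gives the one-sided slope bound $\pd_x u(y, t) \leq K_\delta$ on $[-1, 0]$. Case 2 is the mirror image on $[0, 1]$: the reflected Harnack yields $\phi(y, t) \geq \epsilon_\delta$ there, whence $\pd_x u(y, t) \geq -K_\delta$ on $[0, 1]$.

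In each case, the average of $u(\cdot, t)$ over the relevant unit-length half-interval is at most $(\overline{\mc A} + \pi t)/2$, so by a simple integral-inequality argument (if the strict inequality $u(\cdot, t) > (\overline{\mc A} + \pi t)/2$ held pointwise throughout the half-interval, the integral would exceed its value) there must exist a point $y_0$ in the interior of the half-interval at which $u(y_0, t) \leq (\overline{\mc A} + \pi t)/2$. Integrating the one-sided slope bound across the distance $|y_0| \leq 1$ from $y_0$ to $0$ then produces
\ms
u(0, t) \leq u(y_0, t) + K_\delta \leq \frac{\overline{\mc A}}{2} + \frac{\pi t}{2} + K_\delta,
\mf
which is \eqref{uupbddeven} with $C := K_\delta$. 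The absolute value on the left of \eqref{uupbddeven} is automatic because the local GCSF $u$ satisfies $u \geq 0$ by construction (Lemma \ref{lem:locgcsf}).
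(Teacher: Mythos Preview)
Your proposal is correct and takes essentially the same approach as the paper: the paper also reduces to the case \( \mc{A}(0,t) \leq (\overline{\mc A}+\pi t)/2 \) via the reflection \( u(x,t)\mapsto u(-x,t) \) (your second route to the reflected Harnack, packaged there as a one-line WLOG), and then converts the resulting one-sided gradient bound on \( (-1,0] \) into a height bound using the area \( \mc{A}(0,t) \). The only cosmetic difference is that, instead of picking a single point \( y_0 \), the paper integrates the linear lower bound \( u(x,t)\geq u(0,t)+K_\delta x \) over all of \( (-1,0) \) (a trapezium argument), which yields the marginally sharper constant \( C = K_\delta/2 \).
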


We prove this now, by appealing to our Harnack inequality (Theorem \ref{thm:Harnquanlowbdd}).

\begin{proof}[Proof of Theorem \ref{thm:intheightesteven}]
Fix \( \delta > 0 \).
Let \( \mc{H} \in \Cinfloc(J_1 \times (0, \infty) \! ) \) be the local Harnack quantity for \( u 
 \).
Then by the Harnack inequality (Theorem \ref{thm:Harnquanlowbdd}), we have that
\ms
\mc{H}(y, t) \geq - \pi t.
\mf
for all \( (y, t) \in J_1 \times (0, \infty) \). 
Unpacking the definition \eqref{harnack} of \( \mc{H} \), equivalently we find that 
\ms
\tan^{-1}[\pd_x u(y, t)]\leq \frac{\mc{A}(y, t)}{2 t}. \label{arctanbdd}
\mf

Fix \( t \geq (1 + \delta) \cdot t_\star = (1 + \delta) \cdot \overline{\mc A}/\pi \), and let \( c_t  \in J_1 \) be such that
\ms
\mc{A}(c_t, t) = \frac{1}{2} \mc{A}(1, t) = \frac{\overline{\mc A}}{2} + \frac{\pi t}{2},
\mf
using \eqref{Afullint}. 
By replacing \( u(x, t) \) with \( u(-x, t) \) if necessary, we may suppose that \( c_t \geq 0 \); consequently, for \( y \leq 0 \leq c_t \) we have that 
\ms\label{leftAbdd}
\mc{A}(y, t) \leq \mc{A}(c_t, t) = \frac{\overline{\mc A}}{2} + \frac{\pi t}{2}.
\mf 
Then, using \eqref{arctanbdd}--\eqref{leftAbdd}, we obtain that
\msa{}
\tan^{-1}[\pd_x u(y, t)] &\leq \frac{1}{2t} \left( \frac{\overline{ \mc A}}{2} + \frac{\pi t}{2} \right) = \frac{\overline{\mc A}}{4 t} + \frac{\pi}{4} \leq \frac{\pi}{4(1 + \delta)} + \frac{\pi}{4} < \frac{\pi}{2}.  
\mfa
Hence
\ms
\pd_x u(y, t) \leq 2 C = 2 C(\delta) := \tan\left[ \frac{\pi}{4(1+\delta)} + \frac{\pi}{4} \right], \label{pdxuupbdd}
\mf
again, when \( y \leq 0 \) and \( t \geq (1 + \delta) \cdot t_\star \).

To deduce the height bound, we use the upper gradient bound \eqref{pdxuupbdd} to argue that the region trapped under \( \gamma(t) := \graph(u(t)\!) \) must contain a trapezium, and then use the upper area bound \eqref{leftAbdd} to control the area of the trapezium---see Fig.~\ref{fig:lowerareabdd}.
Let \( t \geq (1 + \delta) \cdot t_\star \) be as before, and set \( h := u(0, t) \).
The upper gradient bound \eqref{pdxuupbdd} implies that the line of gradient \( 2 C \) through \( (0, h) = (0, u(0,t)\!) \in \mb{R}^2 \) must lie below \( \gamma(t) \) to the left of the origin; in other words,
\ms
u(x, t) \geq h + 2Cx  \label{lowubdd}
\mf
for all \( -1 < x < 0 \).
Integrating \eqref{lowubdd} over \( x \in (-1, 0) \) and using the area bound \eqref{leftAbdd}, we get that
\ms
h - C = \int_{x \in (-1,0)} [h + 2 C x] \eds x\leq \int_{x \in (-1,0)} u(x, t) \eds x = \mc{A}(0, t) \leq  \frac{\overline{\mc A}}{2} + \frac{ \pi t}{2},
\mf
which rearranges to
\ms
| u(0, t) | = u(0, t) = h \leq   C + \frac{\overline{\mc A}}{2} + \frac{ \pi t}{2}.  \qedhere
\mf
\end{proof}

\noindent {\it The proof of Theorem \ref{thm:intheightest} is now complete.}

\begin{rem}[Estimate for \( C = C(\delta) \)\!]\label{rem:asympCd}
By rearranging \eqref{pdxuupbdd}, we get that
\msa{}
C(\delta) = \frac{1}{2} \tan\left[ \frac{\pi}{2} - \frac{\pi \delta}{4(1 + \delta)} \right] = \frac{1}{2} \cot\left[ \frac{\pi \delta}{4(1+\delta)} \right] \leq \frac{1}{2} \csc\left[ \frac{\pi \delta}{4(1+\delta)} \right].
\mfa
Together with the lower bound \( \sin(x) \geq 2x/\pi \) for \( 0 < x < \pi/2 \), we deduce that
\ms
C(\delta) \leq \frac{1 + \delta}{\delta}. 
\mf 
Therefore, by choosing $\delta_t > 0$ so that 
\ms
t =: (1 + \delta_t) \cdot t_\star \iff \delta_t := \frac{t - t_\star}{t_\star}, 
\mf
we obtain the bound 
\ms
C(\delta_t) \leq \frac{t}{t - t_\star}.
\mf
\end{rem}

\begin{rem}[Delayed global area-to-height estimate]\label{rem:locproofofglob}
We highlight that  Remark \ref{rem:asympCd} and the proof of Theorem \ref{thm:intheightest} given here can be adapted and applied to the \emph{global} Harnack quantity in \cite[Lemma 2.1]{sobtopdelayed} to give an alternative proof of the following version of Theorem \ref{thm:globheightest}: \emph{For all positive GCSFs \( u \in \Cinfloc(\mb{R} \times [0, \infty); \mb{R}_{\geq 0}) \) with \( u_0 := u(0) \in \mr{L}^1(\mb{R}; \mb{R}_{\geq 0}) \), and for all times \( t > t_\star := \| u_0 \|_{\mr{L}^1(\mb{R})} / \pi \), there holds the height bound
\ms
\| u(t) \|_{\mr{L}^\infty(\mb{R})} \leq \sqrt{t} \cdot \sqrt{ \frac{2 \| u_0 \|_{\mr{L}^1(\mb{R})} }{t - t_\star} }.
\mf}
\end{rem}

\begin{figure}[h]
\centering
\begin{tikzpicture}[scale=2.3, CENTRE]
\fill [gray!30] (-0.95, 3) to[out=-89, in=110] (-0.5, 1.1) to[out=110+180, in=180] (0, 1.2) to[out=0, in=-100] (0.3, 1.4) to (.3,0) to (-1,0) to (-1, 3) to (-0.95, 3);
\draw (.3,1.4) to (0.3,0) node[below]{$c_t$};
\draw [pattern=crosshatch, pattern color=black!60]  (-1,0) to (0, 0) to (0, 1.2) to (-1, 0.1) to (-1,0);
\draw [->] (-1.5, 0) -- (1.5, 0);
\draw [dashed] (-1,3) -- (-1, 0) node[below]{${-1}$};
\draw [dashed] (1,3) -- (1, 0) node[below]{$1$};
\draw [very thick] (-0.95, 3) to[out=-89, in=110] (-0.5, 1.1) to[out=110+180, in=180] (0, 1.2) to[out=0, in=-100] (0.3, 1.4) to[out=80, in=180+70] (0.6, 2) to[out=70, in=-100] (0.9, 3) node[left]{$\gamma(t)$};
\draw (0,1.2) to (0,0) node[below]{$0$} ;
\draw [->] (-1.3, 1) node[left]{$\displaystyle\mc{A}(c_t, t) = \frac{\overline{\mc A}}{2} + \frac{\pi t}{2}$}to (-.8, 1);
\draw [->] (-0.1, 1.6) node[above]{$(0, u(0, t)\!)$} -- (0, 1.25);
\draw [black, fill=black] (-0, 1.2) circle (0.1ex);
\end{tikzpicture}
\caption{The shaded region trapped below \( \gamma(t) := \graph(u(t)\!) \) must contain the hatched trapezium, and so in particular the area of the hatched trapezium must be bounded from above by the area \( \mc{A}(c_t, t) = \mc{A}(1, t)/2 =   \overline{\mc A}/2  + \pi t /2 \) of the shaded region.}
\label{fig:lowerareabdd}
\end{figure}
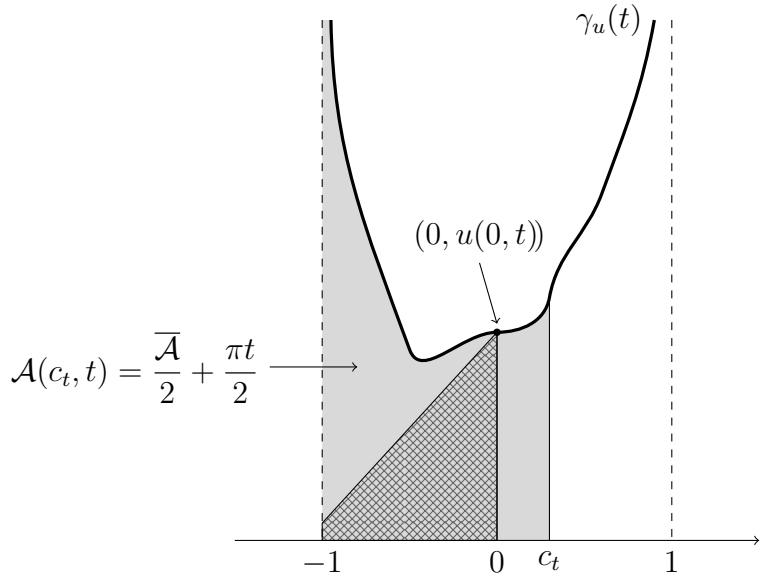

\section{The instantaneous \( \mr{L}^{p>1}_\mr{loc} \)-to-\( \mr{L}^\infty_\mr{loc} \)--estimate}\label{SEC:Lpest}

In this section, we shall use our delayed interior area-to-height estimate (Theorem \ref{thm:intheightest}) to deduce the \emph{instantaneous} \( \mr{L}^{p}_\mr{loc} \)-to-\( \mr{L}^\infty_\mr{loc} \)--estimate of Theorem \ref{thm:intLpest}, where $1 < p < \infty$. 
The proof is essentially a transcription of  \cite[Theorem 4.1]{topping2017sharp}.

\begin{proof}[Proof of Theorem \ref{thm:intLpest}]
As in Claim \ref{claim:suffubdd} in the proof of Theorem \ref{thm:intheightest}, it suffices to assume that \( u(t) \in \mr{C}^{0,1}(J_1) \) for each \( t \in [0, T) \); we omit the details.
Fix \( t \in (0, T) \) and set \( a = \| u_0 \|_{\mr{L}^1(J_1)} = \pi \cdot t_\star \) for \( t _\star := \| u_0 \|_{\mr{L}^1(J_1)} / \pi \).

If \( t \geq a \), then by Theorem \ref{thm:intheightest} with \( 1 + \delta' = \pi \) we have that
\msa{tlarge}
  |u(0, t) | \leq C(\delta' ) + \frac{1}{2} a + \frac{\pi t}{2}\leq C(1 + t) \leq  C \left(1 + \frac{\| u_0 \|_{\mr{L}^p(J_1)}^{p/(p - 1)}}{ t^{1/(p-1)}} + t \right) .
\mfa
If \( t < a \), then let \( k \in \mb{R}_{>0} \) be such that the function $\overline{u}_0 := (|u_0| - k)^+$ satisfies
\ms
\| \overline{u}_0 \|_{\mr{L}^1(J_1)} = \int_{x \in J_1} \overline{u}_0(x) \eds x  = \int_{x \in J_1} (|u_0(x)| - k)^+ \eds x = t. \label{defofk}
\mf
Let \( \overline{u} \in \Cinfloc(J_1 \times (0, \infty)\!) \) be the local GCSF starting from \( \overline{u}_0 \).
Since we have assumed that \( u(t) \in \mr{C}^{0,1}(J_1) \) for all \( t \in [0, \infty) \), the Avoidance Principle (Theorem \ref{thm:avoidii}) tells us that
\ms
|u(x, t)| \leq k + \overline{u}(x, t)
\mf
for all \( (x,t) \in J_1 \times (0, T) \).
Applying Theorem \ref{thm:intheightesteven} to \(\overline{u} \in \Cinfloc(J_1 \times (0, \infty)\!) \) gives that
\ms
|u(0, t)| \leq k + \overline{u}(0, t) \leq k + C({\delta'}) + \frac{1}{2} t + \frac{\pi t}{2}. \label{tsmallk}
\mf
We estimate \( k \in \mb{R}_{>0} \) by integration:
\msa{kest}
\| u_0 \|_{\mr{L}^p(J_1)}^p &= \int_{x \in J_1} |u_0(x)|^p \eds x  \\
&\geq \int_{x \in\{ |u_0| \geq k\}} |u_0(x)|^p \eds x \geq  k^{p-1}  \int_{x \in \{|u_0| \geq k\}} |u_0(x)| \eds x \\
&\geq k^{p-1} \int_{x \in  \{|u_0| \geq k\}} (|u_0(x)| - k) \eds x =  k^{p-1} \int_{x \in J_1} (|u_0(x)| - k)^+ \eds x =  k^{p-1} t,
\mfa
using \eqref{defofk}. 
Hence 
\ms
k \leq \frac{ \| u_0 \|_{\mr{L}^p(J_1)}^{p/(p-1)} }{ t^{1/(p-1)} } \label{estonk},
\mf
which combined with \eqref{tsmallk} gives
\ms\begin{aligned}
 |u(0, t)| \leq \frac{ \| u_0 \|_{\mr{L}^p(J_1)}^{p/(p-1)} }{ t^{1/(p-1)} } + C({\delta}') + \frac{1}{2} t + \frac{\pi t}{2} \leq C \left(1 + \frac{\| u_0 \|_{\mr{L}^p(J_1)}^{p/(p - 1)}}{ t^{1/(p-1)}} + t \right) . 
 \end{aligned} \qedhere \mf
\end{proof}

\section{Existence of solutions starting from \( \mc{M}_*(\mb{R}) \)--initial data}\label{SEC:measgcsf}

In this section, we shall prove Theorem \ref{thm:measgcsf}, that from any \emph{non-atomic real--valued Radon measure \( \nu \in \mc{M}_*(\mb{R}) \)},  there exists a {GCSF} \( u \in \Cinfloc(\mb{R} \times (0, \infty)\!) \) which converges \emph{weakly} to \( \nu \) everywhere and strongly to \( \nu \) away from the singular set of \( \nu \) as time approaches zero. 
See App.~\ref{app:meas} for the definitions in play. \\

Our proof consists of two main steps: the construction of the flow, and the convergence back to the initial condition.

\subsection{Constructing the flow}\label{sec:construction}

We start with a sequence \( (u^n_0)_{n \in \mb{N}} \subset \mr{C}_\mr{c}^\infty(\mb{R}) \) of smooth and compactly supported functions for which \( u^n_0 \msc{L}^1 \rightharpoonup \nu \in \mc{M}(\mb{R}) \) and \( u^n_0 \to u_0 \in \mr{L}^p_\mr{loc}(\Omega) \), and with another sequence  
 \( (U^n_0)_{n \in \mb{N}} \subset \mr{C}_\mr{c}^\infty(\mb{R}) \)  for which \( U_0^n \msc{L}^1 \rightharpoonup |\nu| \in \mc{M}_*^+(\mb{R}) \) and $U_0^n \to |u_0| \in \mr{L}^p_\mr{loc}(\Omega)$, and which is chosen to satisfy \( |u^n_0| \leq U^n_0 \) on \( \mb{R} \) for each \( n \in \mb{N} \).
 We can construct such sequences by cutting-off and mollifying the positive and negative parts of \( \nu \in \mc{M}_*(\mb{R}) \) simultaneously, and then recombining the mollifications (see e.g.~\cite[Ch.~6]{rudin1991functional}).

Let \( u^n \in \mr{C}^\infty(\mb{R} \times [0, \infty)\!) \) be the Ecker--Huisken flow of \( u^n_0 \), and let \( U^n \in \mr{C}^\infty(\mb{R} \times [0, \infty); \mb{R}_{\geq 0}) \) be the Ecker--Huisken flow of \( U^n_0 \). 
The Avoidance Principle (Theorem \ref{thm:avoidii}) applied to \( \pm u^n \) and \( U^n + \eta \) for \( \eta \searrow 0 \) tells us that \( |u^n(t)| \leq U^n(t) \) on \( \mb{R} \) for all \( t \in [0, \infty) \).

We wish to take a locally smooth limit of the flows \( (u^n) \) on \( \mb{R} \times (0, \infty) \); for technical reasons in \S\ref{sec:initialdata}, we will also need to take a limit of the flows \( (U^n) \). 
By the Arzel{\`a}--Ascoli Theorem, it suffices to prove \( \Cinfloc(\mb{R} \times (0, \infty)\!) \)--bounds on the sequences of GCSFs which are independent of \( n \in \mb{N} \).
Recall from \eqref{gcsf'} that  the spatial derivative \( v  = \pd_x u\) of a GCSF $u$ satisfies the equation
\ms
\pd_t v = \pd_x^2 [ \tan^{-1}(v) ] = \pd_x  \left[ \frac{1}{1 + v^2} \pd_x v \right].
\mf  
By  bootstrapping the interior Schauder estimates (see e.g.~\cite[Ch.~III, Theorem 12.1]{ladyzenskaja1988linear}), it suffices to prove \( \mr{C}^{1, \alpha; 0, \frac{\alpha}{2}}_\mr{loc}(\mb{R} \times (0, \infty) \!) \)--bounds which are independent of \( n \).
By the De~Giorgi--Nash--Moser Theorem (see e.g.~\cite[Ch.~III, Theorem 10.1]{ladyzenskaja1988linear}), it suffices to prove \( \mr{C}^{1;0}_\mr{loc}(\mb{R} \times (0, \infty)\!) \)--bounds which are independent of \( n \).
And by Evans \& Spruck's interior gradient estimate (in the form of Theorem \ref{thm:es92cor5.3}), it suffices to prove \( \mr{L}^\infty_\mr{loc}(\mb{R} \times (0, \infty) \! )\)--bounds which are independent of \( n \).

Hence, to construct our GCSF, it suffices to prove

\begin{lem}\label{lem:nuheightbdd}
Let \(  (u^n)_{n \in \mb{N}}, (U^n)_{n \in \mb{N}} \subset \Cinfloc(\mb{R} \times [0, \infty)\!) \) be the sequences of GCSFs as above.
Let \( J \times [\sigma, \tau] \Subset \mb{R} \times (0, \infty) \).
Then there exists a constant \( C = C(\nu, J \times [\sigma, \tau]) < \infty  \) for which
\ms
\| u^n \|_{\mr{L}^\infty(J \times [\sigma, \tau])} \leq \| U^n \|_{\mr{L}^\infty(J \times [\sigma, \tau])} \leq C
\mf
for all sufficiently large \( n \in \mb{N} \).
\end{lem}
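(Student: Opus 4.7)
Since $|u^n(t)| \leq U^n(t)$ pointwise on $\mb{R}$ and $U^n \geq 0$, it suffices to bound $U^n$ from above uniformly on $J \times [\sigma, \tau]$. The plan is to apply the delayed area-to-height estimate (Theorem \ref{thm:intheightest}) to a rescaled copy of $U^n$ centred at each $(x_0, t_0) \in \overline J \times [\sigma, \tau]$. For $r \in (0, 1)$ to be chosen, set
\[
V^n(x, t) = \tfrac{1}{r} \, U^n(x_0 + r x, r^2 t),
\]
so that $V^n$ is a GCSF on $J_1 \times [0, \infty)$ with $\| V^n_0 \|_{\mr{L}^1(J_1)} = r^{-2} \| U^n_0 \|_{\mr{L}^1(x_0 + J_r)}$. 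The admissibility condition $r^{-2} t_0 \geq (1 + \delta) \, t_\star^{V^n}$ required by Theorem \ref{thm:intheightest} translates back into $(1 + \delta) \| U^n_0 \|_{\mr{L}^1(x_0 + J_r)} \leq \pi t_0$, and once it is met, the conclusion \eqref{uupbdd} and unrescaling give
\[
U^n(x_0, t_0) = r \, V^n(0, r^{-2} t_0) \leq r \, C(\delta) + \tfrac{1}{2 r} \, \| U^n_0 \|_{\mr{L}^1(x_0 + J_r)} + \tfrac{\pi t_0}{2 r}.
\]
Once $r, \delta > 0$ are fixed and $\|U^n_0\|_{\mr{L}^1(x_0 + J_r)}$ is controlled uniformly in $x_0 \in \overline J$ and large $n$, this is the desired uniform bound.

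The core task is therefore to find $r > 0$ and $\delta > 0$, and an $N \in \mb{N}$, such that $\|U^n_0\|_{\mr{L}^1(x_0 + J_r)} < \pi \sigma / (1 + \delta)$ uniformly for $x_0 \in \overline J$ and $n \geq N$. Non-atomicity of $|\nu|$ enters here: at each $x_0$, $|\nu|(x_0 + [-r, r]) \searrow 0$ as $r \searrow 0$, and the map $x_0 \mapsto |\nu|(x_0 + [-r, r])$ is continuous on $\mb{R}$ (again by non-atomicity). The sets $O_r := \{ x_0 \in \mb{R} : |\nu|(x_0 + [-r, r]) < \pi \sigma / 4 \}$ are open, nested and exhaust $\mb{R}$ as $r \searrow 0$, so by compactness of $\overline J$ some single $r_0 > 0$ gives $|\nu|(x_0 + [-r_0, r_0]) < \pi \sigma / 4$ for all $x_0 \in \overline J$. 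To transfer this bound from $|\nu|$ to $U^n_0 \msc{L}^1$ uniformly in $x_0$, pick $r_1 > r_0$ slightly larger preserving the same mass control, together with a continuous bump $\phi \in \mr{C}_\mr{c}(\mb{R})$ with $\chi_{[-r_0, r_0]} \leq \phi \leq \chi_{[-r_1, r_1]}$; then
\[
\| U^n_0 \|_{\mr{L}^1(x_0 + J_{r_0})} \leq \int_{\mb{R}} \phi(y - x_0) \, U^n_0(y) \eds y.
\]
The family $x_0 \mapsto \int \phi(y - x_0) U^n_0(y) \eds y$ is equicontinuous on $\overline J$ uniformly in $n$, since $\phi$ is uniformly continuous and $\| U^n_0 \|_{\mr{L}^1(\overline J + [-r_1, r_1])}$ is uniformly bounded in $n$ (another consequence of $U^n_0 \msc{L}^1 \rightharpoonup |\nu|$ applied against any compactly-supported continuous majorant of $\chi_{\overline J + [-r_1, r_1]}$). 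Combined with the pointwise convergence $\int \phi(y - x_0) U^n_0 \eds y \to \int \phi(y - x_0) \eds |\nu|(y)$ given by weak convergence of measures at each $x_0$, Arzelà--Ascoli upgrades this to uniform-in-$x_0$ convergence on $\overline J$, and the required magic-time inequality follows for $n$ large with any sufficiently small $\delta > 0$.

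The main obstacle is precisely this uniform enhancement of weak convergence. Weak convergence $U^n_0 \msc{L}^1 \rightharpoonup |\nu|$ is a pointwise-in-test-function statement, whereas our rescaling argument requires control on the $\mr{L}^1$ mass of $U^n_0$ over the continuous family of windows $\{ x_0 + J_r \}_{x_0 \in \overline J}$, uniformly in $x_0$ and $n$. The equicontinuity/Arzelà--Ascoli device combined with non-atomicity of $\nu$ circumvents this, and this is the point at which the hypothesis $\nu \in \mc{M}_*(\mb{R})$ is indispensable: a single atom of $|\nu|$ located in $\overline J$ would prevent $|\nu|(x_0 + [-r, r])$ from being uniformly small as $r \searrow 0$, and no choice of $r$ could make the magic-time condition fire near that atom.
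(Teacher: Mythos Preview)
Your proof is correct and follows the same overall strategy as the paper: rescale Theorem~\ref{thm:intheightest} to a ball $J_r(x_0)$, use non-atomicity of $|\nu|$ to choose $r$ so small that the magic time falls below $\sigma$, and then invoke compactness of $\overline{J}$ to make the bound uniform in $x_0$.

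The one genuine difference is in how the uniformity is packaged. The paper picks, \emph{pointwise} in $x$, a radius $r_x$ with $|\nu|(J_{2r_x}(x)) \leq \sigma/2$, cites the Portmanteau-type fact that weak convergence of $U^n_0\msc{L}^1$ to the non-atomic $|\nu|$ forces $\|U^n_0\|_{\mr{L}^1(J_{2r_x}(x))} \to |\nu|(J_{2r_x}(x))$ (intervals being continuity sets), obtains the height bound on each $J_{r_x}(x)\times[\sigma,\tau]$ for $n\geq n_x$, and then passes to a \emph{finite} subcover of $\overline{J}$ and maximises $n_x$ and the constants over it. You instead first extract a \emph{single} radius $r_0$ valid across $\overline{J}$ via the continuity of $x_0\mapsto|\nu|(x_0+[-r,r])$, and then transfer the mass bound to $U^n_0$ uniformly in $x_0$ by showing the convolved masses $x_0\mapsto\int\phi(\,\cdot-x_0)\,U^n_0$ are equicontinuous and pointwise convergent, whence uniformly convergent. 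Both routes use non-atomicity in an essential way; the paper's finite-cover version is a touch shorter because the Portmanteau step replaces your bump-function-plus-equicontinuity argument, while your version has the minor advantage of producing a uniform radius and making the two distinct uses of non-atomicity (continuity of the mass map, and shrinkage to zero) more explicit.
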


We do this now.

\begin{proof}[Proof of Lemma \ref{lem:nuheightbdd}]
We start by scaling and translating our interior area-to-height estimate (Theorem \ref{thm:intheightest}).
Let \( u \in \Cinfloc(J_r(y) \times [0, \infty)\!) \) be a GCSF starting from \( u_0 := u(0) \in \mr{L}^1(J_r(y)\!) \), and consider  the rescaled solution \( u^{r,y} \in \Cinfloc(J_1 \times [0, \infty)\!) \) given by
\ms
u^{r, y}(x, t) := \frac{1}{r} u(y + rx, r^2 t),
\mf
which starts from \( u^{r,y}_0(x) := r^{-1} u_0(y+rx) \), cf.~\eqref{urho}.
We can readily check that
\ms 
\| u_0^{r,y} \|_{\mr{L}^1(J_1)} = \frac{1}{r^2} \| u_0 \|_{\mr{L}^1(J_r(y)\!)}, 
\mf
cf.~\eqref{urhoL1}, and
\ms 
\left| u^{r,y}\left(0, \frac{t}{r^2}\right) \right| = \frac{1}{r} |u(y, t) |,
\mf
cf.~\eqref{urhoLinf}.
Therefore Theorem \ref{thm:intheightest} gives the upper bound
\ms
|u(y, t)| \leq C r + \frac{1}{2r} \| u_0 \|_{\mr{L}^1(J_r(y)\!)} + \frac{\pi t}{2 r} \label{urzupbdd}
\mf
for all
\ms
t \geq \| u_0 \|_{\mr{L}^1(J_r(y)\!)},
\mf
for the constant \( C = C({\delta}') < \infty \) for \( 1 + {\delta}' = \pi \), cf.~\eqref{urhoupbdd}.

Next, suppose that \( u \in \Cinfloc(J_{2r}(x) \times [0, \infty) \! )\) is a GCSF starting from \( u_0 := u(0) \in \mr{L}^1(J_{2r}(x)\!) \). 
For any \(y \in J_r(x) \) and \( \| u_0 \|_{\mr{L}^1(J_{2r}(x)\!)} \leq S \leq T \), we can apply the rescaled estimate \eqref{urzupbdd} to \( u \in \Cinfloc(J_r(y) \times [0, T]) \) to get that
\msa{}
|u(y, t)|  \leq C r + \frac{1}{2r} \| u_0 \|_{\mr{L}^1(J_r(y)\!)} + \frac{\pi t}{2 r} \leq Cr + \frac{S}{2r}  + \frac{\pi T}{2 r}
\mfa
for all
\ms
S \leq t \leq T,
\mf
i.e.~that there holds the bound
\ms
 \| u \|_{\mr{L}^\infty(J_r(x) \times [S, T])} \leq C(r, S, T) \label{ufatintbdd}
\mf
for all \( \| u_0 \|_{\mr{L}^1(J_{2r}(x)\!)} \leq S \leq T  \).
We will apply our interior area-to-height estimate (Theorem \ref{thm:intheightest}) in the form \eqref{ufatintbdd}.

It suffices to prove the upper bound on \( U^n \) only.
For each \( x \in \mb{R} \), let \( r_x = r_x(\nu, \sigma) > 0 \) be such that
\ms
|\nu|(J_{2 r_x}(x)\!) \leq \frac{\sigma}{2}.
\mf
By the weak convergence of \( U^n_0 \) to \( |\nu| \) as \( n \nearrow \infty \) and by the non-atomicity of \( \nu \), it follows (see e.g.~\cite[Theorem 1.40]{evans2015measure}) that 
\ms
\| U^n_0 \|_{\mr{L}^1(J_{2r_x}(x)\!)} = \int_{y \in J_{2 r_x}(x)} U^n_0(y) \eds y   \to \int_{y \in J_{2 r_x}(x)} \eds |\nu|(y)  =  |\nu|(J_{2 r_x}(x)\!) \leq \frac{\sigma}{2}
\mf
as \( n \nearrow \infty \), and so there exists \( n_x = n_x(\nu, \sigma) \in \mb{N} \) for which
\ms
 \| U^n_0 \|_{\mr{L}^1(J_{2r_x(x)})}  \leq \sigma
\mf
for all \( n \geq n_x \).
Hence, by \eqref{ufatintbdd}, we have that
\ms
 \| U^n \|_{\mr{L}^\infty(J_{r_x}(x) \times [\sigma, \tau])} \leq C(r_x, \sigma, \tau)
\mf
for all \( n \geq n_x \).
By the pre-compactness of \( J \Subset \mb{R} \), we can pick a finite collection \( X_J = X_J(\nu, \sigma) = \set{x_1, x_2, \dots, x_m} \subset \mb{R} \) for which 
\ms
J \subset \bigcup_{x \in X_J} J_{r_x}(x).
\mf
Maximising \( n_x = n_x(\nu, \sigma) \) and \( C(r_x, \sigma, \tau ) \) over all \( x \in X_J = X_J(\nu, \sigma) \) gives the desired \( \mr{L}^\infty(J \times [\sigma, \tau]) \)--bound.
\end{proof}
 
We now pass to a common sub-sequence of indices \( (n_1, n_2, n_3, \dots) \) to extract limits \( u \in \Cinfloc(\mb{R} \times (0, \infty) \! ) \) and \( U \in \Cinfloc(\mb{R} \times (0, \infty); \mb{R}_{\geq 0}) \). 
The ordering \( |u(t)| \leq U(t) \) on \( \mb{R} \) for every \( t \in (0, \infty) \) is preserved in the limit. \\

\emph{The construction of the flow is now complete.}

\subsection{Convergence to the initial datum}\label{sec:initialdata}

We continue the proof of Theorem \ref{thm:measgcsf} by showing that the solution \( u \in \Cinfloc(\mb{R} \times (0, \infty)\!) \) constructed in \S\ref{sec:construction} converges to the initial datum in the senses of \eqref{convM}--\eqref{convL1loc}. 

We handle the weak convergence first. 
As in the analogous case of the two--dimensional RF in \cite{topping2021smoothing}, we shall do this via an area estimate.

\begin{prop}[{cf.~\cite[Lemma 4.5]{topping2021smoothing}}]\label{prop:dIntdt}
Let \( u  \in \Cinfloc(J \times (0, T) \! ) \) be a GCSF and let \( \varphi \in \mr{C}_\mr{c}^\infty(J) \).
Then
\ms\label{dIntdt}
\left| \dbyd{}{t} \int_{y\in J} \varphi(y) u(y, \cdot \, ) \eds y \, \right| \leq  C(\varphi)  := \frac{\pi}{2} \int_{y \in J} |\pd_x\varphi(y)| \eds y. 
\mf
\end{prop}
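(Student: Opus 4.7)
The plan is to differentiate under the integral sign, substitute the GCSF equation in divergence form, integrate by parts, and then use the uniform bound on $\arctan$.

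Concretely, since $u \in \Cinfloc(J \times (0,T))$ and $\varphi \in \mr{C}_\mr{c}^\infty(J)$, the integrand and its time derivative are jointly continuous and supported in a fixed compact subset of $J$, so differentiation under the integral sign is justified:
\ms
\dbyd{}{t} \int_{y \in J} \varphi(y) u(y, t) \eds y = \int_{y \in J} \varphi(y) \pd_t u(y, t) \eds y.
\mf
The next step is to use the divergence form of \eqref{GCSF}, namely $\pd_t u = \pd_x(\tan^{-1}(\pd_x u))$, and integrate by parts. Because $\varphi$ has compact support in the open interval $J$, no boundary terms appear, and we obtain
\ms
\int_{y \in J} \varphi(y) \pd_x\!\left(\tan^{-1}(\pd_x u)\right)\!(y, t) \eds y = - \int_{y \in J} \pd_x \varphi(y) \cdot \tan^{-1}(\pd_x u(y, t)) \eds y.
\mf

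Finally, the pointwise bound $|\tan^{-1}(v)| < \pi/2$ for all $v \in \mb{R}$ yields
\ms
\left| \dbyd{}{t} \int_{y \in J} \varphi(y) u(y, t) \eds y \right| \leq \frac{\pi}{2} \int_{y \in J} |\pd_x \varphi(y)| \eds y,
\mf
which is precisely \eqref{dIntdt}. There is no serious obstacle here; the only subtlety worth mentioning is that the structural role of the divergence form (and hence of the uniform $\pi/2$-bound on $\arctan$, which is exactly the bound driving all of the delayed estimates above) is what turns a time derivative into a quantity controlled by $\|\pd_x \varphi\|_{\mr{L}^1}$, independently of the flow. This estimate will then be integrated over time in \S\ref{sec:initialdata} to obtain an equicontinuity-in-time statement for the measures $u(t) \msc{L}^1$, which combined with the height bounds of Lemma \ref{lem:nuheightbdd} will deliver the weak convergence \eqref{convM}.
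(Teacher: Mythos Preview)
Your proof is correct and follows essentially the same route as the paper's own argument: differentiate under the integral, substitute the divergence form $\pd_t u = \pd_x(\tan^{-1}(\pd_x u))$, integrate by parts (using compact support of $\varphi$), and apply $|\tan^{-1}| \leq \pi/2$. The only differences are cosmetic --- you include slightly more justification for differentiating under the integral and add a forward-looking remark about the role of the estimate.
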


\begin{proof}
Differentiating under the integral and writing \( v = \pd_x u \), we have that
\msa{}
\left| \, \left. \dbyd{}{t} \right. \int_{y \in J} \varphi(y) u(y, \cdot) \eds y \, \right| &=  \left| \, \int_{y \in J} \varphi(y) \cdot \pd_x [ \tan^{-1}(v)](y, s) \eds y \, \right| \\
&= \left| \, \int_{y \in J} \pd_x \varphi(y) \cdot [\tan^{-1}(v)](y, s) \eds y \, \right| \\ 
& \leq \frac{\pi}{2} \int_{y \in J} |\pd_x \varphi(y)| \eds y. 
\end{aligned}\qedhere\mf
\end{proof}

\begin{proof}[Proof of \eqref{convM}]
Let \( (t_m)_{m \in \mb{N}} \subset (0, \infty) \) be an arbitrary sequence with \( t_m  \to 0 \) as \( m \nearrow \infty \). 
We need to show that \eqref{defnwconv} holds with \( \nu_m = u(t_m)\msc{L}^1 \) for all \( \varphi \in \mr{C}^0_\mr{c}(\mb{R}) \).

Suppose for now that \( \varphi \in \mr{C}^\infty_c(\mb{R}) \). 
Applying Proposition \ref{prop:dIntdt} to \( u^n \) and integrating over \( t \in [0, t_m] \) gives that
\ms
\left| \, \int_{y \in \mb{R}} \varphi(y) u^n(y, t_m) \eds y - \int_{y \in \mb{R}} \varphi(y) u^n_0(y) \eds y \, \right| \leq C(\varphi) t_m. \label{weakconvn}
\mf
By taking \( n \nearrow \infty \) in \eqref{weakconvn}, using the local smooth convergence \( u^n(t_m) \to u(t_m) \) and the weak convergence \( u^n_0 \msc{L}^1 \rightharpoonup \nu \), we get that
\ms
\left| \, \int_{y \in \mb{R}} \varphi(y) u(y, t_m) \eds y - \int_{y \in \mb{R}} \varphi(y)  \eds\nu( y )\, \right| \leq C(\varphi) t_m. \label{weakconv}
\mf  
The right-hand side of \eqref{weakconv} clearly converges to zero as \( m \nearrow \infty \).

In particular, by replacing \( u \) with \( U \) and by replacing \( \varphi \) with any \( \chi \in \mr{C}^\infty_\mr{c}(J; \mb{R}_{\geq 0}) \) for which \( C(\chi) = \pi \), we get from \eqref{weakconv} that
\ms
\int_{y \in J} \chi(y) U(y, t_m) \eds y \leq \int_{y \in J} \chi(y) \eds|\nu|(y) + \pi t_m. \label{weakconvU}
\mf
Taking \( \chi \nearrow \chi_J \) gives by the Monotone Convergence Theorem that 
\ms
 \| U(t_m) \|_{\mr{L}^1(J)} \leq |\nu|(J) + \pi t_m . \label{Ugrow}
 \mf 

For a general \( \varphi \in \mr{C}^0_\mr{c}(\mb{R}) \), we approximate by a smooth \( \widetilde\varphi \in \mr{C}^\infty_\mr{c}(J) \) where \( \supp(\varphi) \Subset J \Subset \mb{R} \).
For see that
\msa{tilphitrick}
&\left| \, \int_{y \in \mb{R}} \varphi(y) u(y, t_m) \eds y - \int_{y\in\mb{R}} \varphi(y) \eds\nu(y) \right| \\
&\qquad{} \leq \left| \, \int_{y \in J} [\varphi(y) - \widetilde{\varphi}(y)]  u(y, t_m) \eds y \, \right|  \\
&\qquad\qquad{}+ \left| \, \int_{y \in \mb{R}} \widetilde{\varphi}(y) u(y, t_m) \eds y - \int_{y \in \mb{R}} \widetilde{\varphi}(y) \eds\nu(y) \right| \\
&\qquad\qquad{}+ \left| \, \int_{y \in J} [\widetilde{\varphi}(y) - \varphi(y)] \eds\nu(y) \right|  \\
 &\qquad\leq \| \varphi - \widetilde{\varphi} \|_{\mr{L}^\infty(J)} \big[ \| u(t_m) \|_{\mr{L}^1(J)} + |\nu|(J) \big] \\
 &\qquad\qquad{} + \left| \, \int_{y \in \mb{R}} \widetilde{\varphi}(y) u(y, t_m) \eds y - \int_{y \in \mb{R}} \widetilde{\varphi}(y) \eds\nu(y) \right|.\!
\mfa
The last term in \eqref{tilphitrick} converges to zero as \( m \nearrow \infty \), so it suffices to show that the second-last term can be chosen to be arbitrarily small for all \( m \in \mb{N} \).
By the presence of the factor of \( \|\varphi - \widetilde\varphi\|_{\mr{L}^\infty(J)} \), it suffices to show that \( \| u(t_m) \|_{\mr{L}^1(J)}  + |\nu|(J) \) can be chosen to be bounded for all \( m \in \mb{N} \).
But by \eqref{Ugrow}, this is clearly the case.
\end{proof}

The strong convergence \eqref{convL1loc} is a modification of part of \cite[Theorem A]{chou2020general}; we recall their argument for completeness.
The key tool is the following \( \mr{L}^p_\mr{loc} \)--separation estimate: 

\begin{prop}[{Contained in \cite[Proposition 3.1]{chou2020general}}]\label{prop:c20prop3.1}
Let \( u_1, u_2 \in \Cinfloc(J_R \times (0, T) \! ) \) be two {GCSF}s defined on \( J_R \subset \mb{R} \). 
Then for all \( 0 < r < R \) and \( \delta > 0 \), there exists a cut-off function \( \chi \in \mr{C}_\mr{c}^\infty(J_R; [0,1]) \) with \( \chi = 1 \) on \( J_r \Subset J_R \) such that, for all \( 1 \leq p < \infty \), there holds the estimate
\ms
\| (u_1(t) - u_2(t)\!) \chi \|_{\mr{L}^p(J_R)} \leq \| (u_1(s) - u_2(s)\!) \chi \|_{\mr{L}^p(J_R)} + \frac{ 2 \pi (1 + \delta) p}{(R - r)^{1 - \frac{1}{p}}} (t - s)  \label{Lpest}
\mf
for all \( 0 < s \leq t < T \).
\end{prop}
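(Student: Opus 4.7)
The plan is to work with the difference $w := u_1 - u_2$ and track the evolution of $\int_{J_R} \chi^p |w|^p \eds x$ against a cutoff $\chi \in \mr{C}^\infty_\mr{c}(J_R; [0,1])$ with $\chi \equiv 1$ on $J_r$, to be chosen at the end of the argument. Since each $u_i$ satisfies \eqref{GCSF}, setting $F_i := \tan^{-1}(\pd_x u_i)$ gives the divergence-form evolution $\pd_t w = \pd_x(F_1 - F_2)$.

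Differentiating in time and integrating by parts (the compact support of $\chi$ kills boundary terms) yields
\[
\frac{\mr{d}}{\mr{d}t}\!\int_{J_R}\! \chi^p |w|^p \eds x = -p^2 \!\! \int_{J_R}\! \chi^{p-1} \pd_x \chi \cdot |w|^{p-1}\mr{sgn}(w)(F_1 - F_2) \eds x - p(p-1) \!\! \int_{J_R}\! \chi^p |w|^{p-2} \pd_x w\,(F_1 - F_2) \eds x.
\]
The second integrand is pointwise non-negative because $\tan^{-1}$ is monotone increasing, so $(a-b)(\tan^{-1} a - \tan^{-1} b) \geq 0$. Discarding this dissipation term and using the crude bound $|F_1 - F_2| \leq \pi$ (since $\tan^{-1}$ takes values in $(-\pi/2, \pi/2)$), one obtains
\[
\frac{\mr{d}}{\mr{d}t} \int_{J_R} \chi^p |w|^p \eds x \leq p^2 \pi \int_{J_R}(\chi |w|)^{p-1}|\pd_x \chi| \eds x \leq p^2 \pi \| \chi w \|_{\mr{L}^p(J_R)}^{p-1} \| \pd_x \chi \|_{\mr{L}^p(J_R)}
\]
by Hölder's inequality with exponents $p/(p-1)$ and $p$. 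Writing the left-hand side as $p \| \chi w \|_{\mr{L}^p}^{p-1} \frac{\mr{d}}{\mr{d}t} \| \chi w \|_{\mr{L}^p}$ and integrating from $s$ to $t$ (the division being trivial at times where the norm vanishes) yields the estimate \eqref{Lpest} with constant $p\pi\|\pd_x\chi\|_{\mr{L}^p(J_R)}$ in place of the claimed one.

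It remains to choose $\chi$ so that $\| \pd_x \chi\|_{\mr{L}^p(J_R)} \leq 2(1+\delta)/(R-r)^{1-1/p}$. Let $\chi_0$ be the piecewise-linear tent function equal to $1$ on $J_r$ and decaying linearly with slope $\pm 1/(R - r)$ across $J_R \setminus J_r$; a direct computation gives $\| \pd_x \chi_0\|_{\mr{L}^p} = 2^{1/p}(R-r)^{-(1-1/p)} \leq 2(R-r)^{-(1-1/p)}$, and a standard mollification produces a smooth $\chi$ whose derivative norm is within a factor of $1+\delta$ of this, delivering the claimed constant $2\pi(1+\delta)p/(R-r)^{1-1/p}$.

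The main technical obstacle is that $|w|^p$ is not twice classically differentiable (and, when $p=1$, not even once) on the zero set of $w$, so the formal calculation above requires justification. The standard remedy is to replace $|w|^p$ by $\psi_\varepsilon(w) := (w^2 + \varepsilon^2)^{p/2}$, which is smooth in $w$ and convex for every $p \geq 1$; the regularised dissipation integral retains its favourable sign (as $\psi_\varepsilon''(w) \geq 0$), and the remaining boundary integrand is bounded by $p\pi \chi^{p-1}|\pd_x \chi| (w^2 + \varepsilon^2)^{(p-1)/2}$. Passing to the limit $\varepsilon \searrow 0$ by dominated convergence, using the local $\mr{L}^\infty$-bound on $w$ from the smoothness of $u_1$ and $u_2$, recovers the required bound.
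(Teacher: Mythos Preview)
Your proof is correct and follows the same approach that the paper sketches: the paper merely specifies the cutoff (with \( |\pd_x\chi| \leq \frac{1+\delta}{R-r}\chi_{J_R\setminus J_r} \)), notes the bound \( |\tan^{-1}(v)|\leq \pi/2 \), and defers the remaining computation to Chou \& Kwong, whereas you have written out that computation in full. The energy identity, the sign of the dissipation term via monotonicity of \( \tan^{-1} \), the H\"older step, and the \( \psi_\varepsilon \)-regularisation are exactly the ingredients behind the cited proof, and your choice of \( \chi \) yields the same constant as the paper's.
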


\begin{proof}[Sketch proof]
Choose \( \chi \in \mr{C}^\infty_\mr{c}(J_R; [0,1]) \) to satisfy \( \chi(y) = 1 \) for all \( y \in J_r \) and \( |\pd_x \chi(y)| \leq  \frac{1 + \delta}{R - r} \chi_{J_R \setminus J_r}(y) \) for all \( y \in J_R \).
Noting that \(   |\!\tan^{-1}(v)| \leq  \pi / 2 \) for all \( v \in \mb{R} \), following the proof of \cite[Proposition 3.1]{chou2020general} leads to \eqref{Lpest}.
\end{proof}

\begin{rem}\label{rem:sharpareagrow}
In the case of \( p = 1 \) and \( R  = 1 \) in Proposition \ref{prop:c20prop3.1}, by taking \( r \nearrow 1 \) and \( \delta \searrow 0 \) in \eqref{Lpest}, we deduce that
\ms
 \| u_1(t) - u_2(t) \|_{\mr{L}^1(J_1)} \leq \| u_1(s) - u_2(s) \|_{\mr{L}^1(J_1)} + 2 \pi (t - s) . \label{globL1ineq}
\mf
By letting \( u_1 \) and \( u_2 \) be disjoint Grim Reaper solutions \eqref{grimreaper} translating away from each other, one can see that \eqref{globL1ineq} is sharp.
\end{rem}

\begin{proof}[Proof of \eqref{convL1loc}]
We shall apply Proposition \ref{prop:c20prop3.1} in following weakened form: {\it For all \(  0 < r < R \)  and \( 1 \leq p < \infty \), there exists a constant \( C_{p, R-r} \) for which
\ms
\| u_1(t) - u_2(t) \|_{\mr{L}^p(J_r)} \leq \| u_1(s) - u_2(s)  \|_{\mr{L}^p(J_R)} + C_{p, R-r}(t - s) \label{Lpestinex}
\mf
for all \( 0 < s \leq t< T \).}

Let \( \mc{K} \Subset \Omega \) be compact; then as \( \Omega \subseteq \mb{R} \) is open, and therefore a countable union of disjoint open intervals, it follows that \( \mc{K} \) is contained within a \emph{finite} union of open intervals. 
Thus, it suffices to prove that \( u(t) \to u_0 \in \mr{L}^p(J) \) for a single open interval \( J \Subset \Omega \). 
By translation, we may suppose that \( J = J_r \Subset J_R \subseteq \Omega \).

Let \( \veps > 0 \).
We first choose \(n \in \mb{N} \) so large that 
\ms
 \| u^n_0 - u_0 \|_{\mr{L}^p(J_R)} \leq \frac{\veps}{4}. 
 \mf 
With this \( n\) fixed, we then choose \( t_\veps > 0 \) so small that
\ms
 \| u^n(t) - u(t) \|_{\mr{L}^p(J_r)} \leq \frac{\veps}{2} \qquad \text{ and }  \qquad \| u^n(t) - u^n_0 \|_{\mr{L}^p(J_r)} \leq \frac{\veps}{4}
\mf
for all \( 0 < t < t_\veps \), using \eqref{Lpestinex} and the locally smooth convergence \( u^n(t) \to u^n_0 \) as \( t \searrow 0 \) respectively.
Taken altogether, we get that
\msa{}
\| u(t) - u_0 \|_{\mr{L}^p(J_r)} &\leq \| u(t) - u^m(t) \|_{\mr{L}^p(J_r)} + \| u^m(t) - u^m_0 \|_{\mr{L}^p(J_r)} + \| u^m_0 - u_0 \|_{\mr{L}^p(J_r)} \\
&\leq \frac{\veps}{2} + \frac{\veps}{4} + \frac{\veps}{4} = \veps
\mfa
for all \( 0 < t < t_\veps \).
\end{proof}

{\it The proofs of the convergences \eqref{convM}--\eqref{convL1loc} is now complete.} \\

\noindent {\it This completes the proof of Theorem \ref{thm:measgcsf}.}

\subsection{Some comments on the work of Chou \& Kwong}\label{sec:choukwong}

First, we clarify that if the initial datum \( \nu = u_0 \msc{L}^1 \in \mc{M}_*(\mb{R}) \) has density  \( u_0 \in \mr{L}^p(\mb{R}) \) for some \( 1\leq p < \infty \), then the solution \( u \in \Cinfloc(\mb{R} \times (0, \infty)\!) \) constructed in Theorem \ref{thm:measgcsf} attains its initial datum in \( \mr{L}^p(\mb{R}) \).

\begin{proof}[Proof of \eqref{convLp}]
We appeal to \eqref{Lpestinex}. 
Recall that \eqref{convL1loc} already shows the \( \mr{L}^p_\mr{loc} \)--convergence.
By setting \( u_1 = u \) and \( u_2 = 0 \) and letting \( s \searrow 0 \), we get that
\ms
\| u(t) \|_{\mr{L}^p(J_r)} \leq \| u_0 \|_{\mr{L}^p(J_R)} + C_{p, R-r} t.
\mf
Letting \( r := R - 1 \nearrow \infty \) gives us that
\ms
\| u(t) \|_{\mr{L}^p(\mb{R})} \leq \| u_0 \|_{\mr{L}^p(\mb{R})} + C_{p, 1} t,
\mf
and so, in particular, \( u(t) \in \mr{L}^p(\mb{R}) \) for all \( t \in [0, \infty) \) and 
\ms
\limsup_{t \searrow 0} \| u(t) \|_{\mr{L}^p(\mb{R})} \leq \| u_0 \|_{\mr{L}^p(\mb{R})}. \label{limsupest}
\mf 

The following measure-theoretic lemma is straightforward to prove (see e.g.~\cite[\S4.3]{sobphd}):

\begin{lem}\label{lem:Lploctoglob}
Let \( (f_m)_{m \in \mb{N}} \subset \mr{L}^p(\mb{R}) \) be a sequence of functions for some fixed \( 1 \leq p < \infty \), and let \( f \in \mr{L}^p(\mb{R}) \).
Suppose that
\ms\notag
f_m \to f \in \mr{L}^p_\mr{loc}(\mb{R}) \qquad \text{ and } \qquad \limsup_{m \nearrow \infty} \| f_m \|_{\mr{L}^p(\mb{R})} \leq \| f \|_{\mr{L}^p(\mb{R})}.
\mf
Then
\ms
f_m \to f \in \mr{L}^p(\mb{R}).
\mf
\end{lem}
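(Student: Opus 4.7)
The plan is to localise, use the hypothesis on the full norm to control the tail mass of the $f_n$, and then combine with the $\mr{L}^p_\mr{loc}$--convergence on a large compact set.

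First I would fix $\veps > 0$ and, using the integrability of $|f|^p$ over $\mb{R}$, choose $R = R(\veps) > 0$ so large that
\ms
\| f \|_{\mr{L}^p(\mb{R} \setminus J_R)} < \veps .
\mf
Next, by the triangle inequality I split
\ms
\| f_n - f \|_{\mr{L}^p(\mb{R})} \leq \| f_n - f \|_{\mr{L}^p(J_R)} + \| f_n \|_{\mr{L}^p(\mb{R} \setminus J_R)} + \| f \|_{\mr{L}^p(\mb{R} \setminus J_R)} .
\mf
The first summand tends to zero as $n \nearrow \infty$ by the hypothesis $f_n \to f \in \mr{L}^p_\mr{loc}(\mb{R})$, and the last summand is $< \veps$ by the choice of $R$. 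The only non-trivial term is the middle one.

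To control the tail $\| f_n \|_{\mr{L}^p(\mb{R} \setminus J_R)}$, I would use additivity of the $p$-th power of the $\mr{L}^p$--norm over disjoint sets:
\msa{tailsplit}
\| f_n \|_{\mr{L}^p(\mb{R} \setminus J_R)}^p = \| f_n \|_{\mr{L}^p(\mb{R})}^p - \| f_n \|_{\mr{L}^p(J_R)}^p .
\mfa
By the $\mr{L}^p_\mr{loc}$--convergence we have $\| f_n \|_{\mr{L}^p(J_R)} \to \| f \|_{\mr{L}^p(J_R)}$, while by hypothesis $\limsup_{n \nearrow \infty} \| f_n \|_{\mr{L}^p(\mb{R})}^p \leq \| f \|_{\mr{L}^p(\mb{R})}^p$. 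Taking $\limsup$ in \eqref{tailsplit} therefore gives
\ms
\limsup_{n \nearrow \infty} \| f_n \|_{\mr{L}^p(\mb{R} \setminus J_R)}^p \leq \| f \|_{\mr{L}^p(\mb{R})}^p - \| f \|_{\mr{L}^p(J_R)}^p = \| f \|_{\mr{L}^p(\mb{R} \setminus J_R)}^p < \veps^p .
\mf

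Putting everything together yields $\limsup_{n \nearrow \infty} \| f_n - f \|_{\mr{L}^p(\mb{R})} \leq 2 \veps$, and letting $\veps \searrow 0$ concludes the proof. The only mildly delicate point is making sure one invokes the $\limsup$ hypothesis \emph{after} isolating the tail via \eqref{tailsplit}, so that the local convergence of the $p$-th powers on $J_R$ can be subtracted off exactly; no further approximation is required.
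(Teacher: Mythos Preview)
Your argument is correct. The paper does not actually supply a proof of this lemma---it merely declares it ``straightforward to prove''---so there is nothing to compare against; your tail-splitting via \( \| f_n \|_{\mr{L}^p(\mb{R} \setminus J_R)}^p = \| f_n \|_{\mr{L}^p(\mb{R})}^p - \| f_n \|_{\mr{L}^p(J_R)}^p \) together with the \( \limsup \) hypothesis is the natural route and fills the gap cleanly.
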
 

Applying Lemma \ref{lem:Lploctoglob} to \( f_m := u(t_m) \) and \( f := u_0 \) for an arbitrary sequence \( (t_m)_{m \in \mb{N}} \subset (0, \infty) \) converging to zero, keeping in mind \eqref{convL1loc} and \eqref{limsupest}, gives the desired global convergence \eqref{convLp}.
\end{proof}

Second, we highlight that Proposition \ref{prop:c20prop3.1} implies the following continuity property for the GCSF in \( \mr{L}^{p}_\mr{loc} \) for $1 < p < \infty$:

\begin{thm}[Continuous dependence on $\mr{L}^p_\mr{loc}$--initial data when $p >1$]\label{thm:Lploccont}
Let \( 1 < p < \infty \), and let \( u_0 \in \mr{L}^p_\mr{loc}(\mb{R}) \).
Let \( u \in \Cinfloc(\mb{R} \times (0, \infty)\!) \) be any GCSF for which \( u(t) \to u(0) := u_0 \in \mr{L}^p_\mr{loc}(\mb{R}) \) as \( t \searrow 0 \). 
Then \( u \in \mr{C}^0_\mr{loc}([0, \infty); \mr{L}^p_\mr{loc}(\mb{R})\!) \), and the mapping \( \mr{L}^p_\mr{loc}(\mb{R}) \ni u_0 \mapsto  u \in \mr{C}^0_\mr{loc}([0, \infty); \mr{L}^p_\mr{loc}(\mb{R})\!) \) is well-defined and continuous; in particular, for each \( u_0 \in \mr{L}^p_\mr{loc}(\mb{R}) \), the associated solution \( u \in \Cinfloc(\mb{R} \times (0, \infty)\!) \) is unique.
\end{thm}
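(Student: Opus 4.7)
The plan is to derive all three conclusions—continuity in time, uniqueness, and continuity of the solution map—from Proposition \ref{prop:c20prop3.1}, exploiting the fact that the boundary error term $(R-r)^{-(1-1/p)}$ in the separation estimate vanishes as $R \nearrow \infty$ precisely because $p > 1$.

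Time continuity is almost free. For any $t_0 > 0$, smoothness of $u$ on $\mb{R} \times (0, \infty)$ gives $u(t) \to u(t_0)$ locally smoothly as $t \to t_0$, hence in $\mr{L}^p_\mr{loc}(\mb{R})$; continuity at $t_0 = 0$ (from the right) is the hypothesis. Together these give $u \in \mr{C}^0_\mr{loc}([0, \infty); \mr{L}^p_\mr{loc}(\mb{R})\!)$.

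For uniqueness, suppose $u, \tilde u \in \Cinfloc(\mb{R} \times (0, \infty)\!)$ share a common $\mr{L}^p_\mr{loc}$--initial datum $u_0$. Fix $0 < r < R$ and $\delta > 0$; Proposition \ref{prop:c20prop3.1} furnishes a cut-off $\chi$ with $\chi \equiv 1$ on $J_r$ for which
\ms
\|(u(t) - \tilde u(t)\!)\chi\|_{\mr{L}^p(J_R)} \leq \|(u(s) - \tilde u(s)\!)\chi\|_{\mr{L}^p(J_R)} + \frac{2\pi(1+\delta)p}{(R-r)^{1-1/p}}(t - s)
\mf
for $0 < s \leq t$. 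Letting $s \searrow 0$ makes the first right-hand term vanish, since $u(s), \tilde u(s) \to u_0$ in $\mr{L}^p(J_R)$; restricting to $J_r$ and then letting $R \nearrow \infty$ (using $1 - 1/p > 0$) forces $u(t) = \tilde u(t)$ on $J_r$, and the arbitrariness of $r$ yields $u \equiv \tilde u$.

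Continuity of the solution map follows in the same spirit. Given $u_0^{(n)} \to u_0$ in $\mr{L}^p_\mr{loc}(\mb{R})$, let $u^{(n)}, u$ denote the associated GCSFs (which exist by Theorem \ref{thm:measgcsf} and are unique by the preceding step). The same manipulation, now retaining the $s = 0$ term, gives
\ms
\|u^{(n)}(t) - u(t)\|_{\mr{L}^p(J_r)} \leq \|u_0^{(n)} - u_0\|_{\mr{L}^p(J_R)} + \frac{2\pi(1+\delta)p}{(R-r)^{1-1/p}} t.
\mf
For a prescribed $\veps > 0$ and a compact time window $[0, T]$, one picks $R$ so large that the second term is $< \veps/2$ uniformly over $t \in [0, T]$, then $n$ so large that the first term is $< \veps/2$; this delivers uniform-in-$t$ convergence on $[0, T]$ in $\mr{L}^p(J_r)$, which, as $r$ and $T$ are arbitrary, is exactly continuity into $\mr{C}^0_\mr{loc}([0, \infty); \mr{L}^p_\mr{loc}(\mb{R})\!)$.

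The only real subtlety is the simultaneous elimination of the ``boundary-in-$s$'' and ``boundary-in-$R$'' terms in the separation estimate. The former is controlled by the $\mr{L}^p_\mr{loc}$--initial-condition hypothesis; the latter requires $p > 1$, as when $p = 1$ the constant $(R - r)^{-(1 - 1/p)}$ is identically $1$ and the scheme collapses—precisely why the $p = 1$ case of uniqueness remains unknown, as highlighted in \S\ref{sec:results}.
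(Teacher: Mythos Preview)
Your proof is correct and follows essentially the same approach as the paper: both derive everything from Proposition \ref{prop:c20prop3.1} by letting \(s \searrow 0\) to pass to the initial data and then exploiting \(p>1\) to send \(R \nearrow \infty\) and kill the boundary error \((R-r)^{-(1-1/p)}\). The paper folds uniqueness into the continuity argument (taking \(\widetilde u_0 = u_0\)) rather than treating it separately, and phrases continuity via \(\veps\)--neighbourhoods rather than sequences, but these are cosmetic differences.
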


\begin{proof}
It is immediate that \( u \in \mr{C}^0_\mr{loc}(\!(0, \infty); \mr{L}^p_\mr{loc}(\mb{R})\!) \); the continuity down to the initial time is precisely the statement that the GCSF attains its initial datum in \( \mr{L}^p_\mr{loc}(\mb{R}) \). 

The rest of the theorem follows from Proposition \ref{prop:c20prop3.1}.
Fix \( J_r \Subset \mb{R} \) and \( [a, b] \Subset [0, \infty) \), and let \( \veps > 0 \) be arbitrary.
Choose \( R > 0 \) so large that
\ms
 r < \frac{R}{2}  \qquad \text{ and } \qquad \frac{2^{3 - \frac{1}{p}} \pi p t}{R^{1 - \frac{1}{p}}} \leq \frac{8 \pi p b}{R^{1 - \frac{1}{p}}} \leq \frac{\veps}{2},  \label{errorsmall}
\mf
the latter for all \( t \in [a, b] \).
Suppose that \( \widetilde{u}_0 \in \mr{L}^p_\mr{loc}(\mb{R}) \) is such that
\ms
\| u_0 - \widetilde{u}_0 \|_{\mr{L}^p(J_R)} \leq \frac{\veps}{2},  \label{initdatclose}
\mf
and let \( \widetilde{u} \in \Cinfloc(\mb{R} \times (0, \infty)\!) \) be any GCSF starting from \( \widetilde{u}_0 \) in the \( \mr{L}^p_\mr{loc}(\mb{R}) \)--sense. 
Setting \( \delta =  1 \), \( u^1 = u \) and \( u^2 = \widetilde{u}\) in  Proposition \ref{prop:c20prop3.1}, we get for every non-zero \(  t \in [a, b] \) that
\msa{Lplocclose}
\| u(t) - \widetilde{u}(t) \|_{\mr{L}^p(J_r)} &\leq \| u(t) - \widetilde{u}(t) \|_{\mr{L}^p(J_{R/2})}  \\
&\leq \| u(s) - \widetilde{u}(s) \|_{\mr{L}^p(J_{R})} +  \frac{4\pi p (t - s)}{(R/2)^{1 - \frac{1}{p}}} \\
&\to \| u_0 - \widetilde{u}_0 \|_{\mr{L}^p(J_{R})} +  \frac{2^{3 - \frac{1}{p}} \pi p t }{R^{1 - \frac{1}{p}}} \\
&\leq \frac{\veps}{2} + \frac{\veps}{2} = \veps
\mfa
as \( s \searrow 0 \), using \eqref{errorsmall}--\eqref{initdatclose}.
Taking the supremum over \( t \in [a, b] \) gives that
\ms
\| u - \widetilde{u} \|_{\mr{C}^0([a, b]; \mr{L}^p(J_r)\!)} = \sup_{t \in [a, b]} \| u(t) - \widetilde{u}(t) \|_{\mr{L}^p(J_r)} \leq \veps.
\qedhere 
\mf
\end{proof}

\begin{rem}
While Theorem \ref{thm:Lploccont}, together with \cite[Theorem A]{chou2020general}, fully resolves the question of well-posed-ness of the GCSF from $\mr{L}^p_\mr{loc}$--initial data when $p > 1$ is strictly greater than $1$, the question of uniqueness and continuous dependence from $\mr{L}^1_\mr{loc}$--initial data, i.e.~the case when $p = 1$, is still open. 
 Since Proposition \ref{prop:c20prop3.1} in the case $p=1$ is sharp---see Remark \ref{rem:sharpareagrow}---the author expects a proof of uniqueness from $\mr{L}^1_\mr{loc}$--initial data to require a novel estimate. 
\end{rem}

\section{Towards a correspondence}\label{SEC:correspond}

Recall that our existence procedure in \S\ref{SEC:measgcsf} took a measure \( \nu \in \mc{M}_*(\mb{R}) \) and produced \emph{two} GCSFs:
 one starting from \( \nu \) and the other starting from \( |\nu| \);
we continue to denote these solutions by \( u \in \Cinfloc(\mb{R} \times (0, \infty)\!) \) and \( U \in \Cinfloc(\mb{R} \times (0, \infty); \mb{R}_{\geq 0}) \) respectively. 
These flows were constructed in such a way as to ensure that \( |u(t)| \leq U(t) \) on \( \mb{R} \) for every \( t \in (0, \infty) \).

In this section, we shall use this additional property, that each of the GCSFs generated by Theorem \ref{thm:measgcsf} is \emph{dominated} by a positive GCSF, to further a correspondence between \emph{dominated GCSFs} and \( \mc{M}_*(\mb{R}) \), following on from Chou \& Kwong in \cite[Appendix]{chou2020general}. 
For the analogous correspondence in the RF setting, see \cite[Theorem 1.7]{peachey2024twodimensional}.

\begin{defn}[Dominated GCSF]\label{defn:domgraflow}
We say that a GCSF \( u \in \Cinfloc(\mb{R} \times (0, \infty)\!) \) is \emph{dominated} if there exists a positive GCSF \( U \in \Cinfloc(\mb{R} \times (0, \infty); \mb{R}_{\geq 0}) \) for which
\ms
| u (t) | \leq U (t)
\mf
on \(  \mb{R} \) for all times \( t \in (0, \infty) \).
\end{defn}

We then have

\begin{thm}[{\emph{Partial} correspondence between dominated GCSFs and \( \mc{M}_*(\mb{R}) \)}\!]\label{thm:partialcorrespond}
There holds the following \emph{partial} correspondence between dominated GCSFs and \( \mc{M}_*(\mb{R}) \):
\begin{enumerate}[(i)]
\item For each \( \nu \in \mc{M}_*(\mb{R}) \), there exists {\normalfont at least one} dominated GCSF \( u \in \Cinfloc(\mb{R} \times (0, \infty)\!) \) for which \( u(t) \msc{L}^1 \rightharpoonup \nu \) as \( t \searrow 0 \).
\item \label{initialtrace} For each dominated GCSF \( u \in \Cinfloc(\mb{R} \times (0, \infty)\!) \), there exists a unique \( \nu \in \mc{M}_*(\mb{R}) \), so-called the {\normalfont initial trace} in \cite{chou2020general}, for which \( u(t) \msc{L}^1  \rightharpoonup \nu \in \mc{M}_*(\mb{R}) \) as \( t \searrow 0 \).
\end{enumerate}
\end{thm}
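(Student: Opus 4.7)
Part~(i) is immediate from the construction in the proof of Theorem~\ref{thm:measgcsf}: given $\nu \in \mc{M}_*(\mb{R})$, \S\ref{SEC:measgcsf} produces a pair of GCSFs $u \in \Cinfloc(\mb{R} \times (0, \infty)\!)$ and $U \in \Cinfloc(\mb{R} \times (0, \infty); \mb{R}_{\geq 0})$ with $|u(t)| \leq U(t)$ on $\mb{R}$ for all $t > 0$ and with $u(t)\msc{L}^1 \rightharpoonup \nu$; the first of these exhibits a dominated GCSF with initial trace $\nu$.

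For part~(ii), given a dominated GCSF $u$ with dominating positive GCSF $U$, I would proceed in three steps.

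\emph{Step 1: Construction of the initial trace.} For any $\varphi \in \mr{C}_\mr{c}^\infty(\mb{R})$, Proposition~\ref{prop:dIntdt} applied to $U$ yields that $t \mapsto \int_\mb{R} \varphi(y) U(y, t) \eds y$ is Lipschitz on $(0, \infty)$ with constant $\tfrac{\pi}{2} \int |\partial_x \varphi| \eds y$, and therefore extends continuously to $t = 0$. Testing against a cut-off $\chi \in \mr{C}_\mr{c}^\infty(\mb{R}; [0, 1])$ with $\chi \equiv 1$ on a given compact $K \subset \mb{R}$ bounds $\sup_{t \in (0, 1]} \int_K U(\cdot, t) \eds y$ in terms of $\int \chi U(\cdot, 1) \eds y$ and $\int |\partial_x \chi| \eds y$. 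The positive linear functional $\varphi \mapsto \lim_{t \searrow 0} \int \varphi U(\cdot, t) \eds y$ is therefore locally bounded on $\mr{C}_\mr{c}^\infty(\mb{R})$ and, by density and the Riesz representation theorem, extends to a positive Radon measure $\mu \in \mc{M}(\mb{R})$ with $U(t)\msc{L}^1 \rightharpoonup \mu$. Applying the same argument to $u$, with the uniform local bound supplied instead by the domination $|u(t)| \leq U(t)$, produces a signed Radon measure $\nu \in \mc{M}(\mb{R})$ with $u(t)\msc{L}^1 \rightharpoonup \nu$ and $|\nu| \leq \mu$.

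\emph{Step 2: Non-atomicity.} The crux is to show that $\mu$ is non-atomic, for then $|\nu| \leq \mu$ forces $\nu \in \mc{M}_*(\mb{R})$ as required. Suppose for contradiction that $\mu(\{x_0\}) = m > 0$ for some $x_0 \in \mb{R}$. For each $r > 0$, choose a cut-off $\chi_r \in \mr{C}_\mr{c}^\infty(J_r(x_0); [0, 1])$ with $\chi_r \equiv 1$ on $J_{r/2}(x_0)$ and monotone on each of the two ramps, so that
\[ \int_\mb{R} |\partial_x \chi_r| \eds y = 2; \]
the key point is that this total variation is independent of $r$. By Proposition~\ref{prop:dIntdt}, the function $G_r(t) := \int \chi_r U(\cdot, t) \eds y$ is $\pi$-Lipschitz in $t$, with constant uniform in $r$. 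Combined with the weak convergence $G_r(t) \to \int \chi_r \eds\mu \geq \mu(\{x_0\}) = m$ as $t \searrow 0$, this yields
\[ \int_{J_r(x_0)} U(\cdot, t) \eds y \geq G_r(t) \geq m - \pi t \]
for every $r > 0$ and every $t \in (0, m/\pi)$. Fixing any such $t$, the mean value of $U(\cdot, t)$ over $J_r(x_0)$ is at least $(m - \pi t)/(2r)$, so there exists $y_r \in J_r(x_0)$ with $U(y_r, t) \geq (m - \pi t)/(2r)$. Letting $r \searrow 0$, we have $y_r \to x_0$ while $U(y_r, t) \to \infty$, contradicting the continuity of $U \in \Cinfloc(\mb{R} \times (0, \infty)\!)$ at the point $(x_0, t)$.

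\emph{Step 3: Uniqueness.} The uniqueness of $\nu$ is immediate from the uniqueness of the weak limit established in Step~1.

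The main obstacle is the cut-off construction in Step~2: the $r$-independence of $\int |\partial_x \chi_r| \eds y = 2$ is precisely what makes Proposition~\ref{prop:dIntdt}'s Lipschitz constant uniform in $r$, and it is this uniformity which exposes the incompatibility between mass concentration at a point and the pointwise smoothness of the dominating GCSF $U$.
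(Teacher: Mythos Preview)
Your proposal is correct and follows essentially the same route as the paper. Part~(i) is dispatched identically; for part~(ii) both arguments use Proposition~\ref{prop:dIntdt} to see that $t \mapsto \int \varphi\, u(\cdot,t)$ is Lipschitz (hence has a limit), use the domination $|u|\le U$ to obtain the local $\mr{C}^0$--bound needed for Riesz, and establish non-atomicity via cut-offs $\chi_r$ whose total variation $\int|\pd_x\chi_r|=2$ is independent of $r$ together with the continuity of $U(\cdot,t)$ for fixed $t>0$. The only cosmetic differences are that you first pass through the auxiliary positive trace $\mu$ of $U$ and then invoke $|\nu|\le\mu$, and that you phrase the non-atomicity step as a contradiction (an atom forces $\int_{J_r(x_0)}U(\cdot,t)\ge m-\pi t$ for all $r$, violating continuity) whereas the paper argues directly (given $\varepsilon$, fix $t_0=\varepsilon/(2\pi)$ and shrink the support so that $\int \varphi\,U(\cdot,t_0)\le \varepsilon/2$); these are contrapositives of one another.
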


\begin{proof}
We need only prove \eqref{initialtrace}.
For any fixed \( \varphi \in \mr{C}^\infty_\mr{c}(\mb{R}) \), by integrating \eqref{dIntdt} from Proposition \ref{prop:dIntdt} we get that
\ms
\left| \, \int_{x \in \mb{R}} \varphi(x) u(x, t) \eds x - \int_{x \in \mb{R}} \varphi(x) u(x, s) \eds x \, \right| \leq C(\varphi)(t - s) ,
\mf
and therefore the assignment
\ms
\varphi \mapsto L \varphi = \lim_{t \searrow 0} \int_{x \in \mb{R}} \varphi(x)  u(x, t) \eds x : \mr{C}^\infty_\mr{c}(\mb{R}) \mapsto \mb{R}
\mf
is well-defined, and is entirely determined by the flow $u$.
The map \( L \) inherits the linearity of the integral in the limit. 

Let \( \mc{K} \Subset \mb{R} \) be compact, and choose a bump function \( \chi \in \mr{C}_\mr{c}^\infty( \mb{R}; [0,1] ) \) with \( \chi = 1 \) on \( \mc K \).
Then, for any \( \varphi \in \mr{C}_\mr{c}^\infty(\mc K) \) it holds that \( |\varphi| \leq \| \varphi \|_{\mr{C}^0(\mc K)} \chi \) on \( \mb{R} \).
Therefore
\msa{Lbdd}
\left| \, \int_{x \in \mb{R}} \varphi(x) u(x, t) \eds x \,  \right| &\leq \int_{x \in \mb{R}}  \| \varphi \|_{\mr{C}^0(\mc K)} \chi \cdot  U(x, t) \eds x   = \| \varphi \|_{\mr{C}^0(\mc K)} \int_{x \in \mb{R}} \chi(x) U(x, t) \eds x  \\
&\leq \| \varphi \|_{\mr{C}^0(\mc K)} \left( \,\, \int_{x \in \mb{R}} \chi(x) U(x, 1) \eds x + C(\chi) (1 - t) \! \right) \\
&\leq M_{\mc K} \| \varphi \|_{\mr{C}^0(\mc K)} 
\mfa
for every \( t \in (0, 1] \), where \( U \in \Cinfloc(\mb{R} \times (0, \infty); \mb{R}_{\geq 0}) \) is the positive GCSF which dominates \( u \).
Taking \( t \searrow 0 \) in \eqref{Lbdd}  gives that 
\ms
|L \varphi| \leq M_{\mc K} \| \varphi \|_{\mr{C}^0(\mc K)}.
\mf

The Riesz Representation Theorem (\cite[Theorem 1.38]{evans2015measure}) ensures that \( L \) is given by integration against a unique real--valued Radon measure \( \nu \in \mc{M}(\mb{R}) \).
To show that \( \nu \in \mc{M}_*(\mb{R}) \), i.e.~that \(\nu \) is non-atomic, it suffices (by outer regularity) to show for all \( z \in \mb{R} \) and \( \veps > 0 \) that \( |\nu(V)| \leq \veps \) for all sufficiently small and open \( V \ni z \). 
Given a bump function \( \varphi \in \mr{C}^\infty_\mr{c}(\mb{R}; [0,1]) \) around \( z \) with \( C(\varphi) = \pi \), note that
\ms
\left| \, \int_{x \in \mb{R}} \varphi(x) \eds\nu(x) \right| \leq \int_{x \in \mb{R}} \varphi(x) U\left(x, \frac{\veps}{2\pi} \right) \eds x + \frac{\veps}{2}. 
\mf
If we choose \( R_z > 0 \) so small that
\ms
\int_{x \in J_{2R_z}(z)} U\left(x, \frac{\veps}{2\pi} \right) \eds x \leq \frac{\veps}{2},
\mf 
then 
\ms
\left| \,  \int_{x \in \mb{R}} \varphi(x) \eds\nu(x) \right| \leq \int_{x \in J_{2R_z}(z)} U\left( x, \frac{\veps}{2 \pi} \right) \eds x  + \frac{\veps}{2} \leq \veps \label{nusmallaroundp}
\mf 
whenever \( \varphi \) is supported in \( J_{2 R_z}(z) \).
In particular, given any open \(  V \subset J_{R_z}(z) \) containing \( z \), if \( (\chi^j)_{j \in \mb{N}} \subset \mr{C}^\infty_\mr{c}(J_{2 R_z}(z); [0,1]) \) is such that \( \chi^j \to \chi_V \)  pointwise as \( j \nearrow \infty \), then by the Dominated Convergence Theorem one can pass to the limit in \eqref{nusmallaroundp} to deduce the required
\ms
|\nu(V)| = \lim_{j \nearrow \infty} \left| \, \int_{x \in \mb{R}} \chi^j(x) \eds\nu(x) \right| \leq \veps. \qedhere
\mf
\end{proof}

Note that an alternative argument is provided in \cite[Appendix]{chou2020general}, where Chou \& Kwong showed that the limit
\ms
\lim_{t \searrow 0} \int_{x \in \mb{R}} \varphi(x) u(x,t) \eds x 
\mf
in the case \( \varphi \in \mr{C}^1_\mr{c}(\mb{R}) \) can be understood as the distributional derivative of a unique \( \mr{L}^\infty_\mr{loc}(\mb{R}) \)--function. 
And, in the positive case \( u \geq 0 \), 
they similarly characterised the limit as an element of \( \mc{M}_*(\mb{R}) \).\\

It is an open question as to how restrictive our `dominated' condition in Definition \ref{defn:domgraflow} is---see \cite[Question 4.4.17]{sobphd}.

\appendix

\section{Angenent's Intersection Principle}\label{app:avoidinter}

We recall the classical statement of Angenent's Intersection Principle from \cite{angenent1991parabolic}.

\begin{thm}[Classical Angenent's Intersection Principle; {\cite[Variation on Theorem 1.3]{angenent1991parabolic}}]\label{thm:classinter}
Let \( [0, T) \ni t \mapsto \gamma^{1}(t), \gamma^2(t) \subset \mb{R}^2 \) be two distinct CSFs, each defined over either the circle $\mb S^1$ or the closed interval $[0,1]$, for which
\ms
\pd \gamma^1(t) \cap \gamma^2(t) = \gamma^1(t) \cap \pd \gamma^2(t) = \emptyset \label{cptdisjoint}
\mf
for all \( t \in [0, T) \). 
Then the number of intersections of \( \gamma^1(t) \) and \( \gamma^2(t) \) is a finite and decreasing function of \( t \in (0, T) \), which strictly decreases whenever \( \gamma^1(t) \) and \( \gamma^2(t) \) meet tangentially.
\end{thm}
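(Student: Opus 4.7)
The standard approach, due to Angenent, is to reduce counting intersections of the two curves to counting zeros of the solution of a linear parabolic equation, and then apply the Sturmian zero-counting theorem of \cite{angenent1988zero}. To set this up locally, fix $t_0 \in [0, T)$ and a point $p \in \gamma^1(t_0) \cap \gamma^2(t_0)$; choose a Euclidean coordinate system $(\xi, \eta)$ in which a tangent line to at least one of $\gamma^{1,2}(t_0)$ at $p$ is horizontal. By the Implicit Function Theorem, both curves are then locally graphs $\eta = u^i(\xi, t)$ on some interval $I \ni \xi_p$ for $t$ in a neighbourhood of $t_0$. Since each $u^i$ solves \eqref{GCSF}, the difference $w := u^1 - u^2$ satisfies a linear parabolic equation
\[
\partial_t w = A(\xi, t)\, \partial^2_\xi w + B(\xi, t)\, \partial_\xi w
\]
with smooth bounded coefficients and $A > 0$; this is obtained by subtracting the GCSF operator applied to $u^2$ from the same operator applied to $u^1$ and applying the fundamental theorem of calculus to the result (the absence of a zero-order term reflects the translation-invariance of the GCSF operator in the graphing variable).

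Next, Angenent's theorem from \cite{angenent1988zero} applied to $w$ on a compact cylinder $I \times [s, s']$ on which $w$ is non-zero on the lateral boundary $\partial I \times [s, s']$ yields that the number of zeros of $w(\,\cdot\,, t)$ is finite for every $t \in (s, s']$, non-increasing in $t$, and strictly decreases across any time at which $w(\,\cdot\,, t)$ has a multiple zero. In the local graphical coordinates, a transverse intersection of $\gamma^1(t)$ and $\gamma^2(t)$ corresponds to a simple zero of $w$, whereas a tangential intersection corresponds to a zero of $w$ at which also $\partial_\xi w$ vanishes, i.e.\ to a zero of multiplicity $\geq 2$. This gives the whole statement in the local patch.

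Finally, hypothesis \eqref{cptdisjoint} keeps the intersection set uniformly away from the curve endpoints $\partial \gamma^{1,2}$ on any compact sub-interval $[s, s'] \subset (0, T)$, so at each such time this set lies in a compact subset of $\mb{R}^2$; cover it by finitely many local graphical patches of the type above and sum the local Sturm conclusions. The main obstacle is precisely this globalisation step: the local patches must be stitched into a single monotone count as $t$ varies, which is delicate because tangent directions rotate in time (so the distinguished $(\xi, \eta)$--frame at $t_0$ may degenerate) and individual intersection points can migrate between patches; this forces a covering argument adapted to the whole time interval $[s, s']$, not just a single time. A secondary technical point is that finiteness is asserted only for $t \in (0, T)$, not $t = 0$: the initial intersection set may a priori be infinite (even with accumulation points), but the Sturmian parabolic smoothing of zeros of $w$ renders the intersection set finite instantaneously.
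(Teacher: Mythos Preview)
The paper does not supply a proof of this theorem: it is stated in the appendix purely as a citation of Angenent's result (``Variation on Theorem 1.3'' of \cite{angenent1991parabolic}), with no argument given. Your proposal is therefore not competing with a proof in the paper but rather sketching Angenent's original method, and in that regard it is broadly correct: localising to graphical coordinates, forming the difference $w = u^1 - u^2$, observing that $w$ solves a linear uniformly parabolic equation, and invoking the Sturmian zero theorem of \cite{angenent1988zero} is exactly the skeleton of Angenent's argument.

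Two small remarks on accuracy. First, in \cite{angenent1991parabolic} Angenent does not pick a fixed Euclidean frame and write both curves as graphs over a line; rather, he writes one curve as a graph in the normal bundle of the other (a ``tubular'' or normal graph), which avoids the frame-rotation issue you flag in your globalisation paragraph. Your version works too, but the normal-graph trick is what makes the patching over time cleaner. Second, the globalisation is less delicate than you suggest once one knows (from the local Sturm theorem) that for any $t_0>0$ the intersection set is finite and the intersections are transverse for all but a discrete set of times: transverse intersections persist smoothly, so the total count is locally constant except at the discrete tangential times, where it drops. No elaborate covering ``adapted to the whole time interval'' is needed; one covers a single time-slice and uses stability of transverse zeros.
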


Notice that Theorem \ref{thm:classinter} is concerned with the evolution of \emph{compact} arcs, while in this paper we are primarily be concerned with \emph{non-compact} curves.
This issue can be remedied as follows. \\

First, we restrict our attention to CSFs which admit `good' parametrisations, in the sense of Peachey's \emph{uniformly proper} condition.

\begin{defn}[Uniformly proper; {\cite[Definition 4.2.1]{peachey2022thesis}}]\label{defn:unifprop}
We say that a CSF \( \gamma : \Omega \times [0, T) \mapsto \mb{R}^2 \) is \emph{uniformly proper} if every restriction \( \gamma : \Omega \times [0, t] \mapsto \mb{R}^2 \) for \( 0 < t < T \) is a proper map.
\end{defn}

Peachey's subclass of CSFs possesses the important feature that, Ecker--Huisken flows of locally Lipschitz functions and Chou--Zhu flows of curves whose ends are graphical are automatically uniformly proper---see \cite[Remark 2.2.31 and Corollary 2.4.15]{sobphd}.  
Clearly compact CSFs are automatically uniformly proper.

Second, we introduce an extension of the condition \eqref{cptdisjoint}.

\begin{defn}[Disjoint ends]\label{defn:disends}
We say that two CSFs \( [0, T) \ni t \mapsto \gamma^1(t), \gamma^2(t) \subset \mb{R}^2 \) have \emph{disjoint ends} if \eqref{cptdisjoint} holds and if there exists a compact set \( \mc{K} \Subset \mb{R}^2 \) for which
\ms
\gamma^1(t) \cap \gamma^2(t) \cap (\mb{R}^2 \setminus \mc{K}) = \emptyset
\mf
for all \( t \in [0, T) \).
\end{defn} 

Armed with Definitions \ref{defn:unifprop} and \ref{defn:disends}, we have the following version of Angenent's Intersection Principle, which shall be the main version used in this paper:

\begin{thm}[Angenent's Intersection Principle; see {\cite[Theorem 2.4.10]{sobphd}}]\label{thm:inter}
Let \(  [0, T) \ni t \mapsto  \gamma^1(t), \gamma^2(t) \subset \mb{R}^2 \) be two distinct uniformly proper CSFs, starting from the locally Lipschitz curves \( \gamma^{1}_0 \), \( \gamma^{2}_0 \) respectively, which have disjoint ends.
Then the number of intersections between \( \gamma^1(t) \) and \( \gamma^2(t) \) is finite function of \( t \in (0, T) \), and a decreasing function of \(t \in [0, T) \) which strictly decreases whenever \( \gamma^1(t) \) and \( \gamma^2(t) \) meet tangentially.
Moreover, suppose that \( \gamma^1_0 \) and \( \gamma^2_0 \) intersect \emph{one-sidedly}\footnote{%
An intersection \( q \) of \( \gamma^1_0 \) and \( \gamma^2_0 \), with \( \gamma^1_0 \) of class \( \mr{C}^1 \) near \( q \),  is \emph{one-sided} if,
in a neighbourhood of \( q \) we can write sub-arcs of \( \gamma^{1}_0 \) and $\gamma^2_0$ as ordered graphs over a common smooth arc with the ordering becoming strict towards the end-points.%
}  \( m \) times and otherwise intersect \( n \) times, within regions where at least one of the curves is \( \mr{C}^1 \); then the number of intersections for strictly positive times is at most \( n \).
\end{thm}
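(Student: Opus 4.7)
The strategy is to reduce the non-compact setting to the classical compact-arc version (Theorem \ref{thm:classinter}) by trimming the curves to a suitable compact sub-arc, and then to handle the one-sided refinement via a strong maximum principle applied to a local graphical representation.

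\textbf{Step 1 (reduction to compact sub-arcs).} Fix \( t_0 \in (0, T) \). By the disjoint-ends hypothesis (Definition \ref{defn:disends}) there is a compact \( \mc{K} \subset \mb{R}^2 \) containing \( \gamma^1(t) \cap \gamma^2(t) \) for every \( t \in [0, T) \). Uniform properness (Definition \ref{defn:unifprop}) implies that each preimage \( (\gamma^i)^{-1}(\mc{K}) \cap (J^i \times [0, t_0]) \) is compact, hence contained in \( K^i \times [0, t_0] \) for some compact \( K^i \subset J^i \). I would then choose a compact interval \( I^i \subset J^i \) strictly containing \( K^i \); then \( \gamma^i(\pd I^i \times [0, t_0]) \) misses \( \mc{K} \), and in particular misses \( \gamma^j(t) \) for every \( t \in [0, t_0] \) and \( j \neq i \). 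After reparametrising each \( I^i \) linearly onto \( [0, 1] \), the restrictions satisfy the compact-disjointness hypothesis \eqref{cptdisjoint} of Theorem \ref{thm:classinter}, while still capturing every intersection of \( \gamma^1(t) \) and \( \gamma^2(t) \) for \( t \in [0, t_0] \).

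\textbf{Step 2 (finiteness and monotonicity).} Applying Theorem \ref{thm:classinter} to the restricted flows from Step 1 and letting \( t_0 \nearrow T \) immediately yields the first three conclusions: the intersection count of \( \gamma^1(t) \) with \( \gamma^2(t) \) is finite on \( (0, T) \), is non-increasing on \( [0, T) \), and drops strictly at every tangential encounter for \( t \in (0, T) \).

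\textbf{Step 3 (one-sided intersections at \( t = 0 \)).} It remains to show that each of the \( m \) one-sided initial intersections fails to persist. Let \( q \in \gamma^1_0 \cap \gamma^2_0 \) be such an intersection, with \( \gamma^1_0 \) of class \( \mr{C}^1 \) near \( q \). By the defining property recalled in the theorem's footnote, in a small disc \( D \) around \( q \) I can write sub-arcs of both curves as Lipschitz graphs \( u^1_0 \leq u^2_0 \) over a short smooth reference arc (e.g.~the tangent line to \( \gamma^1_0 \) at \( q \)), with \( u^1_0 < u^2_0 \) strictly near \( \pd D \) and with no other intersection inside \( D \). By parabolic regularity and continuous dependence, there exists \( \tau > 0 \) such that, for \( t \in [0, \tau] \), the two curves admit (smooth for \( t > 0 \)) graphical representations \( u^{1,2}(t) \) over the same reference arc, with \( u^1(t) < u^2(t) \) on \( \pd D \) throughout. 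The difference \( w(t) := u^2(t) - u^1(t) \geq 0 \) satisfies, for \( t \in (0, \tau] \), a linear uniformly parabolic equation in divergence form
\ms
\pd_t w = \pd_x \! \left(a(x, t) \pd_x w \right), \qquad a(x, t) \in \left[ \tfrac{1}{1 + M^2}, 1 \right],
\mf
obtained by subtracting the \eqref{GCSF} equations satisfied by \( u^{1,2} \). Since \( w \geq 0 \) initially and \( w > 0 \) on \( \pd D \) for all \( t \in [0, \tau] \), the strong maximum principle forces \( w > 0 \) at every interior point for every \( t \in (0, \tau] \); hence \( \gamma^1(t) \cap \gamma^2(t) \cap D = \emptyset \) for \( t \in (0, \tau] \). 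Summing over the \( m \) one-sided intersections, and appealing to Step 2 for the remaining \( n \) intersections outside these discs, we conclude that \( \gamma^1(t) \cap \gamma^2(t) \) has at most \( n \) points for small \( t > 0 \), and therefore for all \( t \in (0, T) \) by the monotonicity already established.

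\textbf{Main obstacle.} The substantive difficulty lies in Step 3: justifying that a common graphical representation of both curves exists on a parabolic neighbourhood of \( (q, 0) \), valid down to \( t = 0 \) with the Lipschitz initial ordering intact. The assumption that at least one of the curves is \( \mr{C}^1 \) near \( q \) is crucial, as it supplies the smooth reference arc needed for the scalar reduction; once this is in place, the strong maximum principle argument is standard, but setting it up carefully in the Chou--Zhu framework (rather than for two smooth compact arcs) is the delicate part of the proof.
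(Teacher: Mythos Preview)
Your outline matches the paper's sketch almost exactly: reduce to compact sub-arcs via uniform properness and disjoint ends (your Step~1), invoke the classical Theorem~\ref{thm:classinter} (your Step~2), and kill the $m$ one-sided initial intersections by the strong maximum principle in local graphical coordinates (your Step~3).

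The one genuine imprecision is the final sentence of Step~3. ``Appealing to Step~2 for the remaining $n$ intersections outside these discs'' does not work as written: the global monotonicity from Step~2 only bounds the total count by $m+n$, and showing that the $m$ discs $D_i$ become intersection-free does not by itself prevent one of the $n$ non-one-sided initial intersections from \emph{bifurcating} into several intersections for small $t>0$. The paper closes this gap by invoking Angenent's local zero-set Lemmas~5.3 and~5.5 from \cite{angenent1988zero}, which assert precisely that a single sign-change of the difference cannot split for short time. Your argument can alternatively be repaired by re-applying Theorem~\ref{thm:classinter} to the compact sub-arcs obtained after excising the $D_i$ --- the boundary hypothesis \eqref{cptdisjoint} holds on those sub-arcs because you have already arranged $u^1 < u^2$ on each $\partial D_i$, and the initial count there is exactly~$n$ --- but this localisation must be made explicit rather than folded into the global ``Step~2''.
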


\begin{proof}[Sketch proof]
The uniformly proper condition and the assumption of disjoint ends allows one to reduce to the case that \(\gamma^1(t) \) and \(\gamma^2(t) \) are both compact for each $t \in [0, T)$, at which point the classical Angenent's Intersection Principle (Theorem \ref{thm:classinter}) applies.  
The Strong Maximum Principle implies that {one-sided} initial intersections disappear instantaneously. 
If there are infinitely many intersections which are not one-sided, then the final assertion is trivial. 
For finitely many initial intersections that are not one-sided, a closer look at Angenent's \cite[Lemmas 5.3 and 5.5]{angenent1988zero} reveals that each one of these must not bifurcate along the flow for a short length of time.  
For details, see \cite[Theorem 2.4.10]{sobphd}.
\end{proof}

An immediate consequence of Angenent's Intersection Principle is the following version of the Avoidance Principle:

\begin{thm}[Avoidance Principle; see {\cite[Theorem 2.4.9]{sobphd}}]\label{thm:avoidii}
Let \( [0, T) \ni t \mapsto \gamma^{1}(t), \gamma^{2}(t) \subset \mb{R}^2 \) be two uniformly proper CSFs, starting from the locally Lipschitz curves \( \gamma^{1}_0 \), \( \gamma^{2}_0 \) respectively, which have disjoint ends.
Suppose that \( \gamma^1_0 \) and \( \gamma^2_0 \) only intersect one-sidedly within regions where at least one of the curves is \( \mr{C}^1 \).
Then for strictly positive times \( t > 0 \) the curves \( \gamma^1(t) \) and \( \gamma^2(t) \) are disjoint.
\end{thm}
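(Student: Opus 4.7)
The plan is to deduce the Avoidance Principle directly from Angenent's Intersection Principle (Theorem \ref{thm:inter}), by observing that the hypothesis here is precisely the special case in which all initial intersections are one-sided, i.e. the case $m = $ (any finite or infinite number) and $n = 0$ in the final clause of Theorem \ref{thm:inter}.

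More concretely, I would proceed as follows. First, one may assume that the two flows $\gamma^1$ and $\gamma^2$ are distinct, since otherwise $\gamma^1_0 = \gamma^2_0$ would intersect everywhere, violating the one-sidedness hypothesis wherever the common curve fails to be locally $\mr{C}^1$ on only one side---and in particular violating it at any regular point. With distinctness in hand, the hypotheses of Theorem \ref{thm:inter} are all met: both flows are uniformly proper, start from locally Lipschitz curves, and have disjoint ends. The quoted conclusion of Theorem \ref{thm:inter} asserts that if the initial curves intersect one-sidedly $m$ times (within regions where at least one curve is $\mr{C}^1$) and otherwise intersect $n$ times, then the number of intersections for strictly positive times is at most $n$. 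Under our hypothesis, every initial intersection of $\gamma^1_0$ with $\gamma^2_0$ is one-sided, so $n = 0$; hence $\gamma^1(t) \cap \gamma^2(t) = \emptyset$ for all $t \in (0, T)$, which is precisely the required conclusion.

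There is no substantial obstacle here, since all the analytic work---the reduction from non-compact to compact arcs via uniform properness and disjoint ends, the instantaneous disappearance of one-sided intersections (which relies on the Strong Maximum Principle), and the non-bifurcation of non-one-sided intersections (via a careful reading of Angenent's \cite[Lemmas 5.3 and 5.5]{angenent1988zero})---has already been packaged into Theorem \ref{thm:inter}. The only care needed in writing the proof is to note that the hypothesis ``only intersect one-sidedly within regions where at least one of the curves is $\mr{C}^1$'' exactly matches the hypothesis on the $m$ one-sided intersections in Theorem \ref{thm:inter}, so that the quantitative ``at most $n$'' bound immediately specialises to ``at most $0$'' here.
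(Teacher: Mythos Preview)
Your proposal is correct and matches the paper's approach: the paper states the Avoidance Principle as an immediate consequence of Angenent's Intersection Principle (Theorem~\ref{thm:inter}) without further elaboration, and your argument simply spells out that ``immediate consequence'' by specialising to the case $n = 0$. The only minor comment is that your discussion of distinctness is slightly over-argued---it suffices to note that if $\gamma^1 \equiv \gamma^2$ then any point is a non-one-sided intersection, contradicting the hypothesis.
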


\section{Real--valued Radon measures}\label{app:meas}

We recall the various measure-theoretic notions required in this paper, mostly from \cite{evans2015measure}. \\

By a \emph{real--valued Radon measure} (\cite[\S1.8]{evans2015measure}) on \( \mb{R} \), we mean a Radon measure \( \mu \) on \( \mb{R} \) paired with a \( \mu \)--measurable function \( \sigma : \mb{R} \mapsto \set{\pm 1} \). 
We write \( \nu = \sigma \mu \) and \( |\nu| = \mu \). 
We denote the space of real--valued Radon measures by \( \mc{M}(\mb{R}) \) and the space of Radon measures by \( \mc{M}^+(\mb{R}) \). \\

The Riesz Representation Theorem (\cite[Theorem 1.38]{evans2015measure}) identifies \( \mc{M}(\mb{R}) \) with the dual space \( \mr{C}^0_\mr{c}(\mb{R})^* \) of continuous and compactly supported functions.
A sequence \( (\nu_n)_{n \in \mb{N}} \) of measures \emph{converges weakly}  (\cite[\S 1.9.1]{evans2015measure}) to another measure \( \nu \), written \( \nu_n \rightharpoonup \nu \), as \( n \nearrow \infty \) if it converges in the weak--$*$ sense when viewed in  \( \mr{C}^0_\mr{c}(\mb{R})^* \); explicitly, 
\ms
\int_\mb{R} \varphi \eds\nu_n \to \int_\mb{R} \varphi \eds \nu  \label{defnwconv}
\mf
as \( n \nearrow \infty \) for any \( \varphi \in \mr{C}^0_\mr{c}(\mb{R}) \).  
\\

We say that a real--valued Radon measure \( \nu \) is \emph{non-atomic} if \( \nu(\set{x}) = 0 \) for all \( x \in \mb{R} \); 
note that Chou \& Kwong call such a measure \emph{continuous} in \cite{chou2020general}.
We denote the space of non-atomic real--valued Radon measures by \( \mc{M}_*(\mb{R}) \), and the space of non-atomic Radon measures by \( \mc{M}^+_*(\mb{R}) \). Clearly \( \nu \in \mc{M}_*(\mb{R}) \) if and only if \( |\nu| \in \mc{M}^+_*(\mb{R}) \).

\section{Evans \& Spruck's interior gradient estimate}\label{app:es92cor5.3}

We will use the following version of Evans \& Spruck's interior gradient estimate: 

\begin{thm}[cf.~{\cite[Corollary 5.3]{evans1992motion}}]\label{thm:es92cor5.3}
Let \( u  \in \Cinfloc(J_{2R} \times [0, 2T]) \) be a GCSF for which \( \| u \|_{\mr{L}^\infty(J_{2R} \times [0, 2T])} \leq L \). 
Then, for any \( 0 < S < T \), there exists a constant \(  C(R, S, T, L) < \infty \) such that
\ms \label{es92cor5.3}
 \| \pd_x u \|_{\mr{L}^\infty(J_R \times [S, T])} \leq C( R, S, T, L ).
\mf
\end{thm}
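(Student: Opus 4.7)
The plan is to reduce the stated estimate to Evans \& Spruck's original interior gradient estimate from \cite{evans1992motion} via a parabolic scaling, with the core input being their Bernstein-type maximum-principle argument applied to the arc-length element of the graph.

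For the core gradient bound, I would work with the quantity \( w := \sqrt{1 + v^2} \), where \( v := \pd_x u \), on a standard cylinder (say \( J_2 \times [0, 2] \) with height bound \( \Lambda \)). Writing \eqref{GCSF} in the quasilinear form \( \pd_t u = (1 + v^2)^{-1} \pd_x^2 u \) and differentiating in \( x \), a direct computation shows that \( w \) satisfies a parabolic inequality of the form
\ms
\pd_t w - \frac{1}{1 + v^2} \pd_x^2 w \leq - \frac{v^2 (\pd_x v)^2}{(1 + v^2)^2 w}.
\mf
The standard Bernstein step is then to consider the composite quantity \( F := \eta^2 \psi(u) w \), where \( \eta \in \mr{C}^\infty_\mr{c}(J_2 \times [0, 2]) \) is a cutoff equal to \( 1 \) on \( J_1 \times [\sigma', 1] \), and \( \psi \) is a height-dependent auxiliary factor of the form \( \psi(u) = e^{-\alpha u} \) (or \( \psi(u) = 2 \Lambda + 1 - u \)), chosen so that the negative term arising from \( \psi'(u) \pd_x u \) supplies a reserve which, together with the good square term above, absorbs the bad terms generated by differentiating \( \eta \) and by the cross terms between \( \eta, \psi \), and \( w \). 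At an interior space-time maximum of \( F \), the conditions \( \nab F = 0 \) and \( (\pd_t - (1+v^2)^{-1} \pd_x^2) F \geq 0 \) then force a bound \( F \leq C(\sigma', \Lambda) \), hence \( w \leq C \) on \( J_1 \times [\sigma', 1] \), and therefore \( |v| \leq C \) too.

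To pass from this standard-cylinder estimate to the stated form \eqref{es92cor5.3}, I would use the scaling invariance of \eqref{GCSF}: if \( u \) is a GCSF then so is \( u^\lambda(x, t) := \lambda^{-1} u(\lambda x, \lambda^2 t) \). Choosing \( \lambda \) so that \( (J_{2R} \times [0, 2T], L) \) is mapped to \( (J_2 \times [0, 2], \Lambda) \) for some \( \Lambda = \Lambda(R, T, L) \), applying the standard estimate, and scaling back produces the gradient bound on \( J_R \times [\sigma, T] \) with an explicit constant \( C = C(R, \sigma, T, L) \). The time translation \( \sigma > 0 \) simply corresponds to the analogous shift in the cutoff \( \eta \) in the scaled picture.

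The main obstacle is the algebraic bookkeeping in computing \( (\pd_t - (1 + v^2)^{-1} \pd_x^2) F \) and verifying that all the error terms generated by the cutoff \( \eta \) and the height-factor \( \psi \) can be absorbed into the single good term \( - v^2 (\pd_x v)^2 / ((1 + v^2)^2 w) \) when \( w \) is large. This is precisely the content of \cite[\S 5]{evans1992motion}; the version stated in Theorem \ref{thm:es92cor5.3} differs only cosmetically via the aforementioned parabolic scaling, so no new analytic ingredient beyond their argument is required.
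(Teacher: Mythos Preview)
Your proposal is correct in spirit and essentially re-derives the content of \cite[Corollary~5.3]{evans1992motion} before passing to the stated form. The paper's own argument is considerably shorter: it simply \emph{quotes} Evans \& Spruck's pointwise estimate
\[
|\pd_x u(0,\tau)| \leq C\bigl(R,\tau,\|u\|_{\mr{L}^\infty(J_R \times [0,2\tau])}\bigr)
\]
as a black box, and then applies it after a spatial translation to each \((y,s) \in J_R \times [\sigma,T]\) (that is, to \(\widetilde{u}(x,t) = u(y+x,t)\) on \(J_R \times [0,2s]\)) to obtain the uniform bound on the rectangle. Your route is more self-contained, sketching the Bernstein-type computation for \(w = \sqrt{1+v^2}\) that underlies their result, and then normalising by a single parabolic rescaling rather than translating to every point. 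One small imprecision in your reduction: a single dilation \(u^\lambda(x,t) = \lambda^{-1} u(\lambda x,\lambda^2 t)\) cannot send \(J_{2R} \times [0,2T]\) onto \(J_2 \times [0,2]\) unless \(T = R^2\); you would need to supplement the rescaling with a time translation or a covering in time, which is exactly what the paper's translate-to-every-\((y,s)\) step does implicitly. This is routine bookkeeping, so the gap is cosmetic rather than substantive.
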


\begin{proof}
Evans \& Spruck's estimate is phrased in \cite[Corollary 5.3]{evans1992motion} as
\ms
| \pd_x u(0, s) | \leq C \left(R, s, \| u \|_{\mr{L}^\infty(J_R \times [0, 2s])} \right)  \label{es92cor5.3orig}.
\mf
To obtain \eqref{es92cor5.3}, we apply \eqref{es92cor5.3orig} to \( J_R \times [0, 2s] \ni (x, t) \mapsto \widetilde{u}(x, t) = u(y + x, t) \in \mb{R} \) for every \( (y,s) \in J_R \times [S, T] \). 
\end{proof}

\phantomsection
\addcontentsline{toc}{section}{References}
\bibliography{refs}{}
\bibliographystyle{amsalpha}

\end{document}